\chardef\coloryes=0  \chardef\isitdraft=0 \ifnum\isitdraft=1   \textwidth 16truecm \textheight 8.4in\oddsidemargin0.2truecm\evensidemargin0.7truecm\voffset-.9truecm         \def\eqref#1{({\ref{#1}})}                   \def\startnewsection#1#2{\section{#1}\label{#2}\setcounter{equation}{0}}      \textwidth 16truecm \textheight 8.4in\oddsidemargin0.2truecm\evensidemargin0.7truecm\voffset-.9truecm   \def\nnewpage{}  \else   \def\startnewsection#1#2{\section{#1}\label{#2}\setcounter{equation}{0}}      \textwidth 16.2truecm \textheight 8.6in\oddsidemargin0.2truecm\evensidemargin0.7truecm\voffset-.9truecm \def\nnewpage{}  \fi 
\begin{document} \def\ques{{\colr \underline{??????}\colb}} \def\nto#1{{\colC \footnote{\em \colC #1}}} \def\fractext#1#2{{#1}/{#2}} \def\fracsm#1#2{{\textstyle{\frac{#1}{#2}}}}    \def\nnonumber{} \def\les{\lesssim} \def\plusdelta{+\delta} \def\colr{{}} \def\colg{{}} \def\colb{{}} \def\colu{{}} \def\cole{{}} \def\colA{{}} \def\colB{{}} \def\colC{{}} \def\colD{{}} \def\colE{{}} \def\colF{{}} \ifnum\coloryes=1   \definecolor{coloraaaa}{rgb}{0.1,0.2,0.8}   \definecolor{colorbbbb}{rgb}{0.1,0.7,0.1}   \definecolor{colorcccc}{rgb}{0.8,0.3,0.9}   \definecolor{colordddd}{rgb}{0.0,.5,0.0}   \definecolor{coloreeee}{rgb}{0.8,0.3,0.9}   \definecolor{colorffff}{rgb}{0.8,0.9,0.9}   \definecolor{colorgggg}{rgb}{0.5,0.0,0.4}  \def\colb{\color{black}}  \def\colr{\color{red}}  \def\cole{\color{colorgggg}}  \def\colu{\color{blue}}  \def\colg{\color{colordddd}}  \def\colgray{\color{colorffff}}  \def\colA{\color{coloraaaa}}  \def\colB{\color{colorbbbb}}  \def\colC{\color{colorcccc}}  \def\colD{\color{colordddd}}  \def\colE{\color{coloreeee}}  \def\colF{\color{colorffff}}  \def\colG{\color{colorgggg}} \fi \ifnum\isitdraft=1    \chardef\coloryes=1     \baselineskip=17.6pt \pagestyle{myheadings} \reversemarginpar \def\const{\mathop{\rm const}\nolimits}   \def\diam{\mathop{\rm diam}\nolimits}     \def\rref#1{{\ref{#1}{\rm \tiny \fbox{\tiny #1}}}} \def\theequation{\fbox{\bf \thesection.\arabic{equation}}} \def\plusdelta{+\delta} \def\startnewsection#1#2{\newpage\colg \section{#1}\colb\label{#2} \setcounter{equation}{0} \pagestyle{fancy} \lhead{\colb Section~\ref{#2}, #1 } \cfoot{} \rfoot{\thepage\ of \pageref{LastPage}} \lfoot{\colb{\today,~\currenttime}~(dkt1)}} \chead{} \rhead{\thepage} \def\nnewpage{\newpage} \newcounter{startcurrpage} \newcounter{currpage} \def\llll#1{{\rm\tiny\fbox{#1}}}    \def\blackdot{{\color{red}{\hskip-.0truecm\rule[-1mm]{4mm}{4mm}\hskip.2truecm}}\hskip-.3truecm}    \def\bluedot{{\colC {\hskip-.0truecm\rule[-1mm]{4mm}{4mm}\hskip.2truecm}}\hskip-.3truecm}    \def\purpledot{{\colA{\rule[0mm]{4mm}{4mm}}\colb}}    \def\pdot{\purpledot} \else    \baselineskip=12.8pt    \def\blackdot{{\color{red}{\hskip-.0truecm\rule[-1mm]{4mm}{4mm}\hskip.2truecm}}\hskip-.3truecm}    \def\purpledot{{\rule[-3mm]{8mm}{8mm}}}    \def\pdot{} \fi \def\KK{K} \def\ema#1{{#1}} \def\emb#1{#1} \ifnum\isitdraft=1   \def\llabel#1{\nonumber} \else   \def\llabel#1{\nonumber} \fi \def\tepsilon{\tilde\epsilon} \def\restr{\bigm|} \def\into{\int_{\Omega}} \def\intu{\int_{\Gamma_1}} \def\intl{\int_{\Gamma_0}} \def\tpar{\tilde\partial} \def\bpar{\,|\nabla_2|} \def\barpar{\bar\partial} \def\qq{{p_0}} \def\FF{F} \def\gdot{{\color{green}{\hskip-.0truecm\rule[-1mm]{4mm}{4mm}\hskip.2truecm}}\hskip-.3truecm} \def\bdot{{\color{blue}{\hskip-.0truecm\rule[-1mm]{4mm}{4mm}\hskip.2truecm}}\hskip-.3truecm} \def\cydot{{\color{cyan} {\hskip-.0truecm\rule[-1mm]{4mm}{4mm}\hskip.2truecm}}\hskip-.3truecm} \def\rdot{{\color{red} {\hskip-.0truecm\rule[-1mm]{4mm}{4mm}\hskip.2truecm}}\hskip-.3truecm} \def\tdot{\fbox{\fbox{\bf\color{blue}\tiny I'm here; \today \ \currenttime}}} \def\nts#1{{\color{blue}\hbox{\bf ~#1~}}}  \def\ntsr#1{\vskip.0truecm{\color{red}\hbox{\bf ~#1~}}\vskip0truecm}  \def\ntsf#1{\footnote{\hbox{\bf ~#1~}}}  \def\ntsf#1{\footnote{\color{blue}\hbox{\bf ~#1~}}}  \def\bigline#1{~\\\hskip2truecm~~~~{#1}{#1}{#1}{#1}{#1}{#1}{#1}{#1}{#1}{#1}{#1}{#1}{#1}{#1}{#1}{#1}{#1}{#1}{#1}{#1}{#1}\\} \def\biglineb{\bigline{$\downarrow\,$ $\downarrow\,$}} \def\biglinem{\bigline{---}} \def\biglinee{\bigline{$\uparrow\,$ $\uparrow\,$}} \def\ceil#1{\lceil #1 \rceil} \def\gdot{{\color{green}{\hskip-.0truecm\rule[-1mm]{4mm}{4mm}\hskip.2truecm}}\hskip-.3truecm} \def\bluedot{{\color{blue} {\hskip-.0truecm\rule[-1mm]{4mm}{4mm}\hskip.2truecm}}\hskip-.3truecm} \def\rdot{{\color{red} {\hskip-.0truecm\rule[-1mm]{4mm}{4mm}\hskip.2truecm}}\hskip-.3truecm} \def\dbar{\bar{\partial}} \newtheorem{Theorem}{Theorem}[section] \newtheorem{Corollary}[Theorem]{Corollary} \newtheorem{Proposition}[Theorem]{Proposition} \newtheorem{Lemma}[Theorem]{Lemma} \newtheorem{Remark}[Theorem]{Remark} \newtheorem{definition}{Definition}[section] \def\theequation{\thesection.\arabic{equation}} \def\sqrtg{\sqrt{g}} \def\DD{{\mathcal D}} \def\OO{\tilde\Omega} \def\EE{{\mathcal E}} \def\lot{{\rm l.o.t.}}                        \def\inon#1{~~~\hbox{#1}}                 \def\endproof{\hfill$\Box$\\} \def\square{\hfill$\Box$\\} \def\inon#1{~~~\hbox{#1}}                 \def\comma{ {\rm ,\qquad{}} }             \def\commaone{ {\rm ,\qquad{}} }          \def\dist{\mathop{\rm dist}\nolimits}     \def\sgn{\mathop{\rm sgn\,}\nolimits}     \def\Tr{\mathop{\rm Tr}\nolimits}     \def\dive{\mathop{\rm div}\nolimits}     \def\grad{\mathop{\rm grad}\nolimits}    \def\curl{\mathop{\rm curl}\nolimits}    \def\det{\mathop{\rm det}\nolimits}     \def\supp{\mathop{\rm supp}\nolimits}   \def\re{\mathop{\rm {\mathbb R}e}\nolimits} \def\wb{\bar{\omega}} \def\Wb{\bar{W}} \def\indeq{\qquad{}}                      \def\indeqtimes{\indeq\indeq\times}  \def\period{.}                            \def\semicolon{\,;}                       \newcommand{\cD}{\mathcal{D}} \newcommand{\cH}{\mathcal{H}} \newcommand{\al}{\alpha} \def\gsdfgsdfga{\par} \newcommand{\be}{\beta} \newcommand{\ga}{\gamma} \newcommand{\de}{\delta} \newcommand{\ep}{\epsilon} \newcommand{\si}{\sigma} \def\gsdkfjg{\gsdfgsdfga} \newcommand{\Si}{\Sigma} \newcommand{\vfi}{\varphi} \newcommand{\om}{\omega} \newcommand{\Om}{\Omega} \newcommand{\cqd}{\hfill $\qed$\\ \medskip} \newcommand{\imp}{\Rightarrow} \newcommand{\tr}{\operatorname{tr}} \newcommand{\vol}{\operatorname{vol}} \newcommand{\id}{\operatorname{id}} \def\dghendjsai{\gsdkfjg} \def\gsdfgdsfgdsfg{\dghendjsai} \newcommand{\p}{\parallel} \newcommand{\norm}[1]{\Vert#1\Vert} \newcommand{\abs}[1]{\vert#1\vert} \newcommand{\nnorm}[1]{\left\Vert#1\right\Vert} \newcommand{\aabs}[1]{\left\vert#1\right\vert} \title[Lagrangian regularity]{A Lagrangian interior regularity result for the incompressible free boundary Euler equation with surface tension} \author[Disconzi]{Marcelo M. Disconzi} \address{Department of Mathematics\\ Vanderbilt University\\ Nashville, TN, USA} \email{marcelo.disconzi@vanderbilt.edu} \thanks{MMD is partially supported by the NSF grant DMS-1812826,  a Sloan Research Fellowship provided by the Alfred P.~Sloan foundation, and from a Discovery grant administered by Vanderbilt University.} \author[Kukavica]{Igor Kukavica} \address{Department of Mathematics\\ University of Southern California\\ Los Angeles, CA 90089} \email{kukavica@usc.edu} \thanks{IK is partially supported by the NSF grants DMS-1615239 and DMS-1907992.} \author[Tuffaha]{Amjad Tuffaha} \address{Department of Mathematics and Statistics\\ American University of Sharjah\\ Sharjah, UAE} \email{atufaha@aus.edu} \begin{abstract} We consider the three-dimensional  incompressible free-boundary Euler equations in a bounded domain and with  surface tension. Using Lagrangian coordinates, we establish a priori estimates for solutions with minimal regularity assumptions on the initial data. \end{abstract} \maketitle \gsdfgdsfgdsfg \startnewsection{Introduction}{sec01}  We consider the free boundary Euler equation of incompressible flow defined on a moving three dimensional domain $\Omega(t) \subseteq \mathbb{R}^{3}$, which read   \begin{align}    \label{ZZ01}    &    u_{t} + (u \cdot \nabla) u  + \nabla p = 0     \inon{in~$\cD$}    \\    &    \label{ZZ02}    \dive u =0     \inon{in~$\cD$}  \\    \label{ZZ03}   & p = \sigma \cH       \inon{on~$\partial\cD$} \\    \label{ZZ04}   & (\partial_{t} + u^{\al} \partial_{x_{\al}}) |_{\partial \cD} \in T \partial \cD   \end{align} where $\cD = \bigcup_{0 \leq t \leq T}  \{ t \} \times \Omega(t)$, $u$ is the fluid's velocity and $p$ its pressure. The symbol $\sigma \geq 0$  denotes the surface tension parameter and $\cH$ is, for each $t$, the  mean curvature of the boundary $\partial \Om(t)$ embedded into $\mathbb{R}^3$.  Also,  $T \partial \cD$ stands for  the tangent bundle of $\partial \cD$ and \eqref{ZZ04} expresses the condition  that the boundary moves with the speed equal to the normal component of $u$. The initial data are given by   \begin{align}    \label{ZZ05}    &   u(\cdot,0) = u_{0} \\    \label{ZZ06} & \Omega(0) = \Omega    .   \end{align} Our aim in this paper is to obtain a~priori estimates for a  local-in-time existence result of solutions to this system with minimal regularity assumptions on the initial data and when $\si > 0$. \gsdfgdsfgdsfg  The first existence results for \eqref{ZZ01}--\eqref{ZZ06} are those of Nalimov \cite{NalimovCauchyPoisson} and Yosihara \cite{YosiharaGravity}, who considered regular irrotational data. In the case of zero surface tension, i.e., $\sigma=0$, Ebin has shown  in  \cite{Ebin_ill-posed} that the problem is ill-posed without the Rayleigh-Taylor stability condition. The problem of well-posedness under the Rayleigh-Taylor condition and in the case of zero surface tension was solved by Wu \cite{WuWaterWaves2d,  WuWaterWaves}. Regarding optimal regularity of the initial data, Wang~et~al obtained in  \cite{WZZZ} the local existence under the sharp Sobolev regularity $H^{2.5+\delta}$ for the zero surface tension case, extending the previous result of Alazard~et~al \cite{AlazardetalLowregularity}, who considered irrotational data. For the Euler equations in ${\mathbb R}^2$ or ${\mathbb R}^{3}$, the sharpness of the exponent $2.5+\delta$ was shown in \cite{BourgainLi15}. \gsdfgdsfgdsfg The well-posedness of the non-zero surface tension problem, although requiring no additional stability condition, is challenging {on} its own right and has to be approached differently.  While the surface tension has a regularizing effect, the boundary evolution contributes  to the energy estimates at top order. Controlling such top order boundary terms, which would automatically vanish in the $\si=0$ case, requires an intricate analysis of several boundary terms that express the coupling of the boundary geometry with the interior evolution. Such analysis is particularly delicate in low regularity spaces in that the ellipticity provided by the mean curvature  cannot be exploited to same extent as in higher regularity due to the presence of rough coefficients in the mean curvature equation. \gsdfgdsfgdsfg Consequently, currently, one does not have estimates that close in spaces near the threshold $H^{2.5+\delta}$ in the case $\si > 0$, with exception of the simpler situation of irrotational data,  for which Alazard, Burq, and  Zuily established a full  local-wellposedness result with optimal regularity \cite{AlazardWaterWaveSurfaceTension}. \gsdfgdsfgdsfg Regarding rotational fluids with $\si > 0$, Schweizer \cite{SchweizerFreeEuler} constructed solutions with rotational data in $H^{4.5}$ with an additional vorticity condition at the surface. Coutand and Shkoller \cite{CoutandShkollerFreeBoundary} used the Lagrangian formulation and  constructed solutions with   $H^{4.5}$ initial data without this restriction. At the same time,  Shatah and Zeng obtained in \cite{ShatahZengGeometry}  a~priori estimates for $H^{3}$ data in Eulerian coordinates using techniques of infinite dimensional geometry in the spirit of Ebin and Marsden \cite{EbinMarsden} (see also \cite{ShatahZengInterface}, where the authors showed how to use their a priori estimates to obtain a local existence result). Ignatova and the second author obtained  in \cite{IgorMihaelaSurfaceTension}  a~priori estimates with interior regularity in $H^{3.5}$, using the Lagrangian (direct) approach, while Ebin and the first author established a local-existence result in $H^{3.5 + \de}$ using a combination of the Lagrangian approach, infinite-dimensional geometry, and semi-group theory \cite{DisconziEbinFreeBoundary3d}. \gsdfgdsfgdsfg For other results on irrotational fluids with surface tension see~\cite{  AlazardStabilizationSurfaceTension,  AlazardCapillaryWaterWaves,  AlazardDispersiveSurfaceTension,  AmbroseVortexSheets,  AmbroseMasmoudiWaterWaves,  Beale_et_al_Growth,  FeffermanetallSplashSurfaceTension,  Iguchi_well_posedness_capillary_gravity,  IonescuGlobal3dCapillary,  GermainMasmoudiShatahGlobalCapillary,  IfrimTataru2dCapillary,  IonescuPusateriGlobal2dwaterSurfaceTension,  YosiharaGravitySurfaceTension}.  Further related results with non-zero surface tension, including the case of rotational fluids, vortex sheets, two-phase fluids, and singular limits, are \cite{ ShkollerVortexSheets,  CoutandShkollerSplash,  Disconzilineardynamic, DisconziEbinFreeBoundary2d, FeffermanIonescuLie,  IonescuPusateriGlobal2dwaterModel,  Ogawa-Tani_FreeBoundarySurfaceTension,  PusateriTwoPhaseOnePhaseLimitSurfaceTension}.  Free-boundary problems constitute a very active and fast-growing area of research, and a complete, or even thorough review of prior works is beyond the scope of this paper. A partial list of references relevant to the above discussion and the results of this paper is \cite{ AlazardAboutGlobalExistence, AlazardCollectionWaterWaves, AlazardCauchyWaterWaves, AlazardCauchyTheoryWaterWaves, AlazardDelortGlobal2dWater, AlazardetalLowregularity,  BieriWu1, FeffermanetallSplash, FeffermanStructural, FeffermanetallMuskat, ShkollerElliptic,  ChristodoulouLindbladFree,  CoutandSingularity, MR2660719, CraigHamiltonianWaterWaves, PoryferreEmergingBottom,  GermainMasmoudiShatahGlobalWaterWaves3D, IfrimHunterTataru, IfrimTataruGlobalWater, IfrimTataruGravityConstant, Iguchi_et_al_FreeBoundary, IonescuPusateriWaterWaves2d, KukavicaTuffaha-Free2dEuler, KukavicaTuffaha-RegularityFreeEuler, KukavicaTuffahaVicol-3dFreeEuler, LannesWaterWaves,  LannesWaterWavesBook, LindbladFree1, Lindblad-LinearizedFreeBoundary, LindbladNordgren-AprioriFreeBoundary, Ogawa-Tani_FiniteDepth, WuAlmostGlobal, WuGlobal}. \gsdfgdsfgdsfg In this manuscript, we use the Lagrangian variables and derive a priori estimates assuming that the initial velocity is in $H^{2.5+\delta}$, where $0 < \delta < 0.5$. Some further minimal assumptions on the data are also necessary in order to  obtain that the second time derivative of the velocity is in $L^2$ (cf.~Remark \ref{R01} below). \gsdfgdsfgdsfg Unlike in the zero surface tension case, when $\si > 0$ the interface regularity is driven by the regularity of the pressure, which can be controlled as a solution to an elliptic problem with  Neumann boundary condition, in terms of the velocity time derivative. The control of the velocity and its time derivatives  is established using a combination of time and tangential energy estimates. Such time and tangential estimates for the velocity lead to some crucial boundary terms whose control is technically challenging (we stress that such boundary terms are absent if $\si=0$). Exploiting the non-linear structure of the  equations and of the boundary condition, we are able to obtain an estimate that reads schematically as \begin{align*} \frac{d}{dt} \Vert \partial_t^2 v \Vert^2_{L^2(\Om)}  + \frac{d}{dt} \Vert \overline{\partial} \partial_t (v\cdot N) \Vert^2_{L^2(\partial \Om)} + \frac{d}{dt} \Vert \overline{\partial}^{2+\delta}  (v\cdot N) \Vert^2_{L^2(\partial \Om)} \lesssim \frac{d}{dt}\int_\Om P_1 + \frac{d}{dt} \int_{\partial \Om} P_2 + P_{3}, \end{align*} where $v$ is the Lagrangian velocity, $P_1$ and $P_2$  are polynomial expressions on the   Lagrangian velocity, the Lagrangian pressure, and their time derivatives, $P_3$ is a polynomial in several norms of the fluid variables, $\overline{\partial}$ are derivatives tangent to the boundary, $N$ is the unit outer normal to $\partial \Omega$, and $\delta$ is a small number. Upon time-integration, the term $P_{3}$ is treated by a standard Gronwall argument. The  remaining two terms on the right hand side, however, do not have a definite sign.  To control such terms we need to show that they can be bounded by lower order terms plus top order terms with small coefficients. Unfortunately, it turns out that this does not seem possible. \gsdfgdsfgdsfg However, if we define ``non-linear energies" that involve powers of the velocity and its derivatives, we arrive at \begin{align*} \frac{d}{dt} \Vert \partial^2_t v \Vert^a_{L^2(\Om)}  + \frac{d}{dt} \Vert \overline{\partial} \partial_t (v\cdot N) \Vert^b_{L^2(\partial \Om)} + \frac{d}{dt} \Vert \overline{\partial}^{2+\delta}  (v\cdot N) \Vert^c_{L^2(\partial \Om)} \lesssim \frac{d}{dt}\int_\Om P_1 + \frac{d}{dt} \int_{\partial \Om} P_2 + P_{3}, \end{align*} for certain $a,b,c > 0$ (and possibly different $P_1$, $P_2$, and $P_3$). Now, using a combination of interpolation, Sobolev embeddings, and Young's inequality, we obtain that, after time integration, the right hand side is bounded by \begin{align*} \ep_0( \Vert \partial^2_t v \Vert^\alpha_{L^2(\Om)}  + \Vert \overline{\partial} \partial_t (v\cdot N) \Vert^\beta_{L^2(\partial \Om)} +  \Vert \overline{\partial}^{2+\delta}  (v\cdot N) \Vert^\gamma_{(\partial \Om)}) + \int_0^t P, \end{align*} where $\ep_0$ is a small number and $\alpha$, $\beta$, and $\gamma$ depend on  $a$, $b$, $c$ and $\delta$.  The problem then reduces to the algebraic question of whether it is possible to choose $a$, $b$, $c$, so that the powers on both sides match. This turns out to be possible\footnote{In the presentation of the  results it is not necessary to work with such general $a$, $b$, and $c$. Having found the correct exponents, we already define our energy with them; see \eqref{EQ00}.} precisely when $0 < \delta < 0.5$ which, unwrapping all definitions, corresponds to estimating $v$ in $H^{2.5+ \delta}$ (this can be seen explicitly in the last estimate, see equations \eqref{EQ197} and \eqref{EQ196}). Note that these energies control, aside from $\partial_t^2 v$ in the interior, only tangential derivatives of the normal component of the velocity on the boundary. But once these have been controlled, a bound for the full norms of $v$ is obtained via div-curl estimates, with  control of the divergence coming from the divergence-free condition, 
control of the curl from the Cauchy invariance, and control of the normal components given by the  above energy estimate. \gsdfgdsfgdsfg To treat the case of a general bounded domain, we employ local coordinates near the boundary and suitably chosen cut-off functions. Such localization techniques are not straightforwardly adapted to the framework of fractional derivatives that we need to employ to obtain estimates in $H^{2.5 +\delta}$. Therefore, we consider the problem in two steps. First, we take the initial domain $\Omega$ to have the simpler topology   \begin{equation}    \Omega= {\mathbb T}^{2} \times [0,1]       \llabel{8Th sw ELzX U3X7 Ebd1Kd Z7 v 1rN 3ZZ32}   \end{equation} and denote its bottom and top boundaries by $\Gamma_{0}$ and $\Gamma_{1}$, respectively. Assume that  the lower boundary   \begin{equation}    \Gamma_{0} = {\mathbb T}^{2} \times \{ 0\}      \llabel{Gi irR XG KWK0 99ov BM0FDJ Cv k opZZ11}   \end{equation} is rigid, while the upper boundary $\Gamma_1(t)$ evolves in time according to the unknown flow map   \begin{equation}    \eta(x_{1},x_{2}, 1,t) \colon \Gamma_{1} \to \Gamma_{1}(t)    \llabel{Y NQ2 aN9 4Z 7k0U nUKa mE3OjU 8D FZZ18}   \end{equation} and is such that $\Gamma_1(0)$ equals   \begin{equation}    \Gamma_{1} = {\mathbb T}^{2} \times \{1 \}    .    \llabel{ YFF okb SI2 J9 V9gV lM8A LWThDP nZZ12}   \end{equation} We then establish our result for this type of domains, see Theorem \ref{T01}. This simplified setting already presents all the main difficulties of the problem, but makes it easier to focus on its core aspects without being distracted by the technicalities caused by the use of fractional derivatives in local charts and their interaction with cut-off functions. Then, we show how to adapt the estimates leading to Theorem \ref{T01} to a general domain, stated as  Theorems~\ref{T02} and~\ref{T03}. \gsdfgdsfgdsfg \gsdfgdsfgdsfg \startnewsection{The Lagrangian variables and the main statement}{sec02} We assume that $\Omega(t)$ is initially the 1-periodic channel    \begin{equation}      \Omega(0) = \Omega =   \mathbb{T}^{2} \times [0,1],       \label{EQ181}   \end{equation} with the rigid bottom boundary  $       \Gamma_{0} = \mathbb{T}^{2} \times \{ 0 \} $. The top boundary $\Gamma_{1}(t)$ evolves and is initially equal to $      \Gamma_{1} = \mathbb{T}^{2} \times \{1 \} $. (The general case is discussed in Section~\ref{sec12} below.) We use $\eta$ to denote the Lagrangian variable and $a$ the inverse of the matrix $ \nabla \eta$. The Lagrangian formulation of the Euler equations then reads   \begin{align}    &    \partial_{t} v^{\alpha}       + a^{\mu \alpha} \partial_{\mu} q =0     \inon{in $\Omega \times [0,T]$}       \label{ZZ13}    \\    \label{ZZ14} & a^{\alpha \beta} \partial_{\alpha} v_{\beta} =0    \inon{in $\Omega \times [0,T]$}  \\&    \partial_{t} \eta = v \inon{in $\Omega \times [0,T]$}    \label{EQ06}   \\    \label{ZZ15} & \partial_{t} a^{\alpha \beta} + a^{\alpha \gamma} \partial_{\mu} v_{\gamma} a^{\mu \beta} = 0     \inon{in $\Omega \times [0,T] $}   & \\    \label{ZZ16} &   a^{\mu \alpha} N_{\mu} q       + \sigma | a^{T}N| \Delta_{g} \eta^{\alpha} =0    \inon{on $\Gamma_{1} \times [0,T]$}   \\    \label{ZZ17} & v^{\mu} N_{\mu} =0    \inon{on $\Gamma_{0} \times [0,T]$}    ,   \end{align} where $N$ is the unit outward normal vector to $\partial \Omega$ and $\Delta_{g}$ is the Laplacian induced on $\partial \Omega(t)$ by $    \eta|_{\Gamma_1} $ i.e.,   \begin{align}    \Delta_{g} (\cdot) = \frac{1}{\sqrt{g}} \partial_{i} (\sqrt{g} g^{ij} \partial_{j}(\cdot))    ,    \label{EQ07}   \end{align} where     \begin{align}    g_{ij} = \partial_{i} \eta^{\mu} \partial_{j} \eta_{\mu},    \label{EQ08}   \end{align} while $g$ is the determinant of the matrix $[g_{ij}]_{i,j=1,2}$. Above and in the sequel, we use the summation convention on repeated indices.  The Greek letters run from 1 to 3, while the Latin go from 1 to 2. \gsdfgdsfgdsfg \gsdfgdsfgdsfg \gsdfgdsfgdsfg \gsdfgdsfgdsfg \gsdfgdsfgdsfg The following is the main statement in which we establish a~priori estimates for the local existence of solutions with initial data $v_0=(v_0^{1},v_0^{2},v_0^{3})\in H^{2.5+\epsilon}$, where $\epsilon\in(0,1/2)$, \gsdfgdsfgdsfg \cole \begin{Theorem} \label{T01} Let $\sigma >0$ and $\epsilon\in(0,1/2)$. Assume that $v_0$ is a smooth divergence-free vector field on $\Om$. Then there exist constants $C_*,T_* > 0$ such that any smooth solution $(v,q)$ to  \eqref{ZZ13}--\eqref{ZZ17}  with initial condition $v_0$ defined on the time interval $[0,T_*]$, satisfies   \begin{align}    \begin{split}     \Vert v\Vert_{H^{2.5+\epsilon}}     + \Vert \partial_{t}v\Vert_{H^{1.5}}     + \Vert \partial_{t}^2v\Vert_{L^2}     + \Vert q\Vert_{H^{2.25+\epsilon/2}}     + \Vert \partial_{t} q\Vert_{H^{1}}     \leq C_*     ,    \end{split}    \label{EQ12a}   \end{align} where $T_{*}, C_*>0$ depend only on  $\Vert v_0\Vert_{H^{2.5+\epsilon}}$, $\Vert v_0^{3}|_{\Gamma_1}\Vert_{H^{2.5}(\Gamma_1)}$, and $\sigma>0$. \end{Theorem}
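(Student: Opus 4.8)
The plan is to establish \eqref{EQ12a} by a continuity (bootstrap) argument for a single \emph{nonlinear} energy $E(t)$. One works on a time interval $[0,T_*]$ short enough that the flow map $\eta$ stays close to the identity; then the cofactor matrix $a$ stays close to $I$, so that the differential operators appearing below are uniformly elliptic and the coercivity constants are uniform, with $T_*$ and the final bound $C_*$ depending only on the quantities listed in the statement. The energy $E(t)$ is built from a power of $\|\partial_t^2 v\|_{L^2(\Omega)}$, a power of $\|\bar\partial\,\partial_t(v\cdot N)\|_{L^2(\Gamma_1)}$, and a power of $\|\bar\partial^{2+\epsilon}(v\cdot N)\|_{L^2(\Gamma_1)}$, together with lower-order norms, the exponents being fixed as explained in the last step; here $\bar\partial$ denotes derivatives tangent to $\Gamma_1$ and $v\cdot N$ the normal component of the Lagrangian velocity.

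Step 1: pressure and interface regularity. Applying $a^{\nu\alpha}\partial_\nu$ to \eqref{ZZ13} and using the constraint \eqref{ZZ14} together with the evolution \eqref{ZZ15} shows that $q$ solves a uniformly elliptic second-order equation with coefficients built from $a$ and right-hand side quadratic in $\nabla v$, with boundary conditions determined on $\Gamma_1$ by \eqref{ZZ16} and on $\Gamma_0$ by the $t$-derivative of \eqref{ZZ17}. Standard elliptic regularity then bounds $\|q\|_{H^{2.25+\epsilon/2}}$ by $\|\partial_t v\|_{H^{1.5}}$, lower-order interior norms, and the regularity of $\Gamma_1(t)$ (which, via $\eta=\eta_0+\int_0^t v$, is itself controlled by $E$); differentiating this elliptic problem in $t$ bounds $\|\partial_t q\|_{H^1}$ by $\|\partial_t^2 v\|_{L^2}$ plus controlled lower-order terms. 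Thus it suffices to control $v$, $\partial_t v$, and $\partial_t^2 v$.

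Step 2: energy identities and the div-curl closure. I would differentiate \eqref{ZZ13} twice in $t$ and test against $\partial_t^2 v$, and separately apply the tangential operators $\bar\partial^{2+\epsilon}$ and $\bar\partial\,\partial_t$ near $\Gamma_1$ and test; in each case the pressure term is integrated by parts. The constraint and \eqref{ZZ17} eliminate the contribution on $\Gamma_0$, while on $\Gamma_1$ the pressure term, after substituting the surface-tension condition \eqref{ZZ16} and commuting $\Delta_g$ past the test function, produces the perfect-derivative boundary energies $\tfrac{d}{dt}\|\bar\partial\,\partial_t(v\cdot N)\|_{L^2(\Gamma_1)}^2$ and $\tfrac{d}{dt}\|\bar\partial^{2+\epsilon}(v\cdot N)\|_{L^2(\Gamma_1)}^2$, plus commutator and lower-order errors. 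Summing the identities yields the schematic inequality $\tfrac{d}{dt}E_{\mathrm{lin}}\lesssim \tfrac{d}{dt}\int_\Omega P_1+\tfrac{d}{dt}\int_{\Gamma_1}P_2+P_3$ recorded in the introduction, and after time integration $\int_0^t P_3$ is a Gronwall term. Once $E(t)$ is under control, the full norms in \eqref{EQ12a} are recovered by a div-curl (Hodge) estimate: $\dive v$ is controlled through the constraint \eqref{ZZ14}, $\curl v$ through the Lagrangian Cauchy invariant, and the normal trace $v\cdot N$ on $\partial\Omega$ through the boundary energies, giving $\|v\|_{H^{2.5+\epsilon}}\lesssim\|\dive v\|_{H^{1.5+\epsilon}}+\|\curl v\|_{H^{1.5+\epsilon}}+\|v\cdot N\|_{H^{2+\epsilon}(\partial\Omega)}+\|v\|_{L^2}$, and similarly for $\partial_t v$.

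The main obstacle is the treatment of $\int_\Omega P_1(t)$ and $\int_{\Gamma_1}P_2(t)$ left after time-integration: these are top order and sign-indefinite, so they cannot be absorbed by any \emph{linear} energy. The device — and the technical heart of the argument — is to use the energy $E(t)$ of non-integer homogeneity: the powers on $\|\partial_t^2 v\|_{L^2}$, $\|\bar\partial\,\partial_t(v\cdot N)\|_{L^2(\Gamma_1)}$, and $\|\bar\partial^{2+\epsilon}(v\cdot N)\|_{L^2(\Gamma_1)}$ are chosen so that, after interpolating the top-order factors in $P_1$ and $P_2$ between the energy norms and lower Sobolev norms and applying Young's inequality, the top-order part is $\le \epsilon_0 E(t)+\int_0^t P$ with $\epsilon_0$ small. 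Existence of such exponents reduces to an algebraic compatibility condition that holds \emph{precisely} for $0<\epsilon<1/2$, which is where the regularity threshold $H^{2.5+\epsilon}$ originates. The delicate points in carrying this through are keeping track of the commutators generated by the fractional tangential derivatives $\bar\partial^{2+\epsilon}$ and of the error terms coming from the mean-curvature operator $\Delta_g$ in \eqref{ZZ16}, whose ellipticity cannot be exploited as freely at this regularity as in the higher-regularity ($H^{3.5}$-type) arguments. Absorbing $\epsilon_0 E(t)$ into the left side and invoking the bootstrap assumption on $\eta$, Gronwall's inequality gives $E(t)\le C_*$ on $[0,T_*]$; combined with Step 1 and the div-curl bounds, this yields \eqref{EQ12a}.
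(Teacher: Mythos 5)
Your plan follows the same architecture as the paper: short-time bootstrap keeping $a$ close to $I$, elliptic estimates for $q$ and $\partial_t q$ in terms of $\partial_t v$ and $\partial_t^2 v$, energy identities from testing time- and tangential-differentiated versions of \eqref{ZZ13}, a div-curl closure using the Cauchy invariant for the curl and \eqref{ZZ14} for the divergence, and the decisive device of a nonlinear energy (in the paper $E=E_0^2+E_1+1$ with $E_0=\|v\|_{H^{2.5-\nu/2}}$, $E_1=\|\partial_t v\|_{H^{1.5}}+\|\partial_t^2 v\|_{L^2}$) whose mismatched powers let interpolation and Young's inequality absorb the sign-indefinite pointwise-in-time contributions, with the algebraic matching condition (cf.\ \eqref{EQ197}--\eqref{EQ196}) producing exactly $\epsilon\in(0,1/2)$. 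That is the heart of the argument and you identified it.

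Two substantive points where your sketch deviates from what actually makes the estimate close. First, you propose applying the pure-tangential operator $\bar\partial^{2+\epsilon}$ and testing; the paper never does a tangential estimate without a time derivative. Both energy identities use $\EE\partial_t$ with $\EE=\tilde\partial^{1-\nu/2}$ or $\EE=\partial_t$ precisely because the highest-order interior term $\int q\,\DD a^{\mu\alpha}\,\DD\partial_\mu v_\alpha$ is handled by the cofactor-type cancellation of Lemma~\ref{L12}, which writes $a$ in cofactor form and cross-integrates by parts in $t$; this requires $\DD=\EE\partial_t$. A direct $\bar\partial^{2+\epsilon}$ test puts $2+\epsilon$ tangential derivatives on $a$, which is beyond what $\|a\|_{H^{2-\nu}}$ affords, and the paper's device is unavailable. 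Second, the boundary energy controls $\Pi\bar\partial\EE v$ (the tangential projection), and a separate step (Section~\ref{sec10}) converts this to $v^3=v\cdot N$; moreover the boundary energy only yields $\|v^3\|_{H^{2-\nu/2}(\Gamma_1)}$, hence $\|v\|_{H^{2.5-\nu/2}}$, not the full $\|v\|_{H^{2.5+\epsilon}}$. The latter is recovered at the end from elliptic regularity on $\Gamma_1$ applied to the $t$-differentiated surface-tension condition \eqref{ZZ16}, using the control of $\|\partial_t q\|_{H^1}$ (see \eqref{EQ216}--\eqref{EQ35}). So the nonlinear-energy/Young bookkeeping is done on $E,F,G,H$ simultaneously via \eqref{EQ33}--\eqref{EQ31}, not purely on the boundary energies you wrote down.
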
 \colb \gsdfgdsfgdsfg Above and in the sequel, if the domain of the Sobolev space is not designated, it is understood to be $\Omega$, while other domains (typically $\Gamma_1$, $\Gamma_0$, and $\partial\Omega$) are explicitly noted. \gsdfgdsfgdsfg In  Remark~\ref{R01} below we show that the condition $\Vert v_0|_{\Gamma_1}\Vert_{H^{2.5}(\Gamma_1)}<\infty$ can be replaced by $\Vert \Delta_2 v_0^3|_{\Gamma_1}\Vert_{H^{0.5}(\Gamma_1)}<\infty$, where $\Delta_2$ is the boundary Laplacian. This last condition is not only sufficient but  is also \emph{necessary} for $\partial_{t}^2v_0\in L^2$. \gsdfgdsfgdsfg Instead of working with $\epsilon>0$, we introduce, for simplicity of notation, the parameter $\nu=1/2-\epsilon$  and thus consider   \begin{equation}      v_0\in H^{3-\nu}    \label{EQ169}   \end{equation} where we assume   \begin{equation}    \nu\in[0,0.5)    .    \llabel{P u 3EL 7HP D2V Da ZTgg zcCC mbvc7EQ184}   \end{equation} By introducing $\nu$, many exponents and Sobolev parameters have simpler forms. Note that we include also the value $\nu=0$ since all the results below hold for this borderline value as well. \gsdfgdsfgdsfg The proof consists of a series of estimates on $v$ and $q$ involving the energies   \begin{equation}     E_{0}        =   \Vert v\Vert_{H^{2.5-\nu/2}}    \label{EQ13}   \end{equation} and   \begin{equation}     E_{1}        =   \Vert \partial_{t}v\Vert_{H^{1.5}}         + \Vert \partial_{t}^2 v\Vert_{L^2}    \label{EQ13a}    .   \end{equation} It is also convenient to introduce the total energy   \begin{equation}    E    = E_{0}^2      + E_{1}      + 1    .    \label{EQ00}   \end{equation} Note that in \eqref{EQ00} $E_0$ is squared while $E_1$ is not. \gsdfgdsfgdsfg Since $\sigma>0$ does not vary, we set $    \sigma=1 $ from here on. \gsdfgdsfgdsfg As usual, in what follows, the symbol $a\les b$ stands for $a \le C b$, where $C$ is a constant. \gsdfgdsfgdsfg \gsdfgdsfgdsfg \gsdfgdsfgdsfg \gsdfgdsfgdsfg \startnewsection{Preliminary estimates}{sec03}  \gsdfgdsfgdsfg In the first lemma, we collect a~priori estimates on the map $\eta$ and the cofactor matrix $a=(\nabla \eta)^{-1}$. \gsdfgdsfgdsfg \cole \begin{Lemma}    \label{L01} Assume that  $    \Vert v\Vert_{L^{\infty}([0,T];H^{3-\nu}(\Omega))} \leq M $. If    \begin{equation}    T \le \frac{1}{C M}    \label{ZZ43}   \end{equation} where $C\ge1$ is a sufficiently large constant, then  the following statements hold:\\ \noindent (i)      $\Vert\eta \Vert_{H^{3-\nu}}              \leq C               $ for $t\in[0,T]$,\\ \noindent (ii)   $\Vert a \Vert_{H^{2-\nu}}              \leq C  $,\\ \noindent (iii)   $\Vert \partial_{t}a \Vert_{H^{s}}       \le        C       \Vert \nabla v \Vert_{H^{s}}$ for  $0\le s\le 2-\nu$ with\\ $\Vert \partial_{t}^2a \Vert_{H^{0.5-\nu/2}}              \leq               C (\Vert \partial_{t}v \Vert_{H^{1.5-\nu/2}}              +                           \Vert v \Vert_{H^{2}}              \Vert v \Vert_{H^{2-\nu/2}})  $ and $\Vert \partial_{t}^2a \Vert_{H^{\nu/2}}              \leq               C (\Vert \partial_{t}v \Vert_{H^{1+\nu/2}}              +                           \Vert v \Vert_{H^{2}}              \Vert v \Vert_{H^{1.5+\nu/2}}) $, and \\ \noindent (iv)  For every $\epsilon_0\in(0,1]$, we have $\Vert a-I\Vert_{H^{2-\nu}}\le\epsilon_0$ and $\Vert \nabla\eta-I\Vert_{H^{2-\nu}}\le\epsilon_0$ provided $T\le 1/C\epsilon_0 M$. \end{Lemma}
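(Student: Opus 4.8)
I would begin from the flow identity: by \eqref{EQ06} and $\eta(\cdot,0)=\id$ one has $\eta(\cdot,t)=\id+\int_0^t v(\cdot,s)\,ds$, hence $\nabla\eta(\cdot,t)=I+\int_0^t\nabla v(\cdot,s)\,ds$. Item~(i) then follows at once from Minkowski's inequality in time: $\Vert\eta\Vert_{H^{3-\nu}}\le\Vert\id\Vert_{H^{3-\nu}(\Omega)}+\int_0^t\Vert v\Vert_{H^{3-\nu}}\,ds\le\Vert\id\Vert_{H^{3-\nu}(\Omega)}+TM$, which is at most $C$ once $T\le 1/(CM)$ with $C$ large. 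For (iv) the same argument gives $\Vert\nabla\eta-I\Vert_{H^{2-\nu}}\le\int_0^t\Vert\nabla v\Vert_{H^{2-\nu}}\,ds\le TM$, which can be made as small as we wish by shrinking $T$. Since $\dim\Omega=3$ and $\nu<1/2$, we have $2-\nu>3/2$, so $H^{2-\nu}(\Omega)$ is a Banach algebra and embeds in $L^\infty(\Omega)$; thus $\Vert\nabla\eta-I\Vert_{L^\infty}$ is small as well and $a=(\nabla\eta)^{-1}$ exists pointwise. Writing $a=\sum_{k\ge0}(I-\nabla\eta)^k$ — a series converging in $H^{2-\nu}$ by the algebra property once $\Vert\nabla\eta-I\Vert_{H^{2-\nu}}$ is small enough, and converging in $L^\infty$ to the pointwise inverse — gives $\Vert a-I\Vert_{H^{2-\nu}}\le C\Vert\nabla\eta-I\Vert_{H^{2-\nu}}\le CTM$. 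Choosing $T\le\epsilon_0/(CM)$ yields the bound $\epsilon_0$ for both $\Vert a-I\Vert_{H^{2-\nu}}$ and $\Vert\nabla\eta-I\Vert_{H^{2-\nu}}$ in (iv); taking $\epsilon_0=1$ gives $\Vert a\Vert_{H^{2-\nu}}\le 1+\Vert I\Vert_{H^{2-\nu}}\le C$, which is (ii).

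For (iii) I would use \eqref{ZZ15} in the form $\partial_t a=-a(\nabla v)a$, together with the classical fact that, since $2-\nu>3/2$, the space $H^{2-\nu}(\Omega)$ acts by multiplication on $H^s(\Omega)$ for every $0\le s\le 2-\nu$, i.e.\ $\Vert fg\Vert_{H^s}\le C\Vert f\Vert_{H^{2-\nu}}\Vert g\Vert_{H^s}$. Applying this twice with $f=a$ and invoking (ii) gives $\Vert\partial_t a\Vert_{H^s}\le C\Vert a\Vert_{H^{2-\nu}}^2\Vert\nabla v\Vert_{H^s}\le C\Vert\nabla v\Vert_{H^s}$ for $0\le s\le 2-\nu$. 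Differentiating in time once more, $\partial_t^2 a=-(\partial_t a)(\nabla v)a-a(\nabla\partial_t v)a-a(\nabla v)(\partial_t a)$; the middle term is bounded by $C\Vert a\Vert_{H^{2-\nu}}^2\Vert\partial_t v\Vert_{H^{1.5-\nu/2}}$ (respectively $C\Vert a\Vert_{H^{2-\nu}}^2\Vert\partial_t v\Vert_{H^{1+\nu/2}}$) by the multiplier estimate. For the two terms that are cubic in $v$ I would peel off the outer factor of $a$ by the multiplier estimate (legitimate since $0.5-\nu/2$ and $\nu/2$ both lie in $[0,2-\nu]$) and reduce to estimating $\Vert(\partial_t a)(\nabla v)\Vert_{H^s}$ with $s=0.5-\nu/2$ (resp.\ $s=\nu/2$); here I would invoke the fractional Leibniz inequality $\Vert fg\Vert_{H^s}\le C(\Vert f\Vert_{H^{s_1}}\Vert g\Vert_{H^{s_2}}+\Vert f\Vert_{H^{s_2}}\Vert g\Vert_{H^{s_1}})$, valid whenever $s_1+s_2-s\ge 3/2$ and $s<\min(s_1,s_2)$, with $(s_1,s_2)=(1,1-\nu/2)$ in the first case and $(s_1,s_2)=(1,0.5+\nu/2)$ in the second. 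Combining with $\Vert\partial_t a\Vert_{H^{s_i}}\lesssim\Vert\nabla v\Vert_{H^{s_i}}\lesssim\Vert v\Vert_{H^{s_i+1}}$ reproduces exactly the right-hand sides $C(\Vert\partial_t v\Vert_{H^{1.5-\nu/2}}+\Vert v\Vert_{H^2}\Vert v\Vert_{H^{2-\nu/2}})$ and $C(\Vert\partial_t v\Vert_{H^{1+\nu/2}}+\Vert v\Vert_{H^2}\Vert v\Vert_{H^{1.5+\nu/2}})$ claimed in (iii).

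The bulk of the argument — the representation of $\eta$, Minkowski's inequality in time, and the Banach-algebra/Neumann-series treatment of $a$ (where the hypothesis $\nu<1/2$ enters precisely through $2-\nu>\tfrac12\dim\Omega$) — is routine, the smallness of $T$ being used both to keep $\nabla\eta$ invertible with $a$ bounded in $H^{2-\nu}$ and to absorb all time integrals into the constant. The one place requiring care is the exponent bookkeeping in the $\partial_t^2 a$ estimates: one must check that each fractional multiplier/Leibniz inequality is valid at the indicated — and in several places borderline, $s_1+s_2-s=3/2$ — exponents, which holds because in every instance the target index $s$ lies \emph{strictly} below both $s_1$ and $s_2$, and one must select the intermediate indices so that the norms of $v$ produced are literally the ones in the statement (these choices being dictated by how the lemma is applied later).
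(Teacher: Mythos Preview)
Your argument is correct; the only substantive departure from the paper is in the order and mechanism for (ii) and the $a-I$ half of (iv). The paper treats (ii) dynamically, reading \eqref{ZZ15} as a Riccati-type integral inequality $\Vert a\Vert_{H^{2-\nu}}\lesssim 1+M\int_0^t\Vert a\Vert_{H^{2-\nu}}^2$ and closing by a Gronwall/comparison argument, and then obtains $\Vert a-I\Vert_{H^{2-\nu}}\le\epsilon_0$ in (iv) from $a-I=\int_0^t\partial_t a$ together with the already-proven (iii). You instead go algebraically: first make $\Vert\nabla\eta-I\Vert_{H^{2-\nu}}$ small, invert $\nabla\eta$ by a Neumann series in the Banach algebra $H^{2-\nu}$, read off $\Vert a-I\Vert_{H^{2-\nu}}\lesssim\Vert\nabla\eta-I\Vert_{H^{2-\nu}}$, and recover (ii) as the special case $\epsilon_0=1$. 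Both routes are standard; yours sidesteps the nonlinear Gronwall at the price of needing the algebra threshold $2-\nu>3/2$ up front, while the paper's ODE argument is indifferent to that threshold for (ii) itself and only uses the algebra property implicitly in the product estimate. Your treatment of (iii), including the endpoint product estimates at $s_1+s_2-s=3/2$ with $s<\min(s_1,s_2)$ strict, is a faithful (and more explicit) fleshing-out of what the paper leaves as ``follows directly from \eqref{ZZ15}.''
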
 \colb \gsdfgdsfgdsfg Since the proofs follow easily from \eqref{EQ06} and \eqref{ZZ15}, we only briefly outline them. \gsdfgdsfgdsfg {\begin{proof}[Proof of Lemma~\ref{L01}] (i) By \eqref{EQ06}, we have   \begin{equation}    \Vert\eta\Vert_{H^{3-\nu}}    \leq    \Vert x\Vert_{H^{3-\nu}}    +    \left\Vert \int_{0}^{t} v\right\Vert_{H^{3-\nu}}    \les     1+ T M    \llabel{0 qq P cC9 mt6 0og cr TiA3 HEjw TKEQ223}   \end{equation} and the rest follows from the choice \eqref{ZZ43}.\\ (ii) From \eqref{ZZ15}, we get $    \Vert a\Vert_{H^{2-\nu}}    \les    1    +    M    \int_{0}^{t} \Vert a\Vert_{H^{2-\nu}}^2 $, and the claim is obtained using the Gronwall lemma.\\ (iii) Follows directly from \eqref{ZZ15}.\\ (iv) To obtain the claim, we  use $  a-I = \int_{0}^{t} \partial_{t} a$ and then apply (ii) to obtain   \begin{align*}    \Vert a - I \Vert_{H^{2-\nu}}      &\les      \int_{0}^{t} \Vert \partial_{t} a \Vert_{H^{2-\nu}}      \les  \int_{0}^{t}  \Vert \nabla v \Vert_{H^{2-\nu}}      \les \epsilon_0    \end{align*} for $t \leq T' = \epsilon/C M$. Similarly,   \begin{align*}    \Vert \nabla\eta - I \Vert_{H^{2-\nu}}      &\les      \int_{0}^{t} \Vert \nabla v \Vert_{H^{2-\nu}}      \les  \int_{0}^{t}  \Vert v \Vert_{H^{3-\nu}}      \les \epsilon_0      \end{align*} under the condition $t\leq \epsilon_0/ C M$. \end{proof} \gsdfgdsfgdsfg \startnewsection{Pressure estimates}{sec04} For reference, we state the trace inequality for the vector fields with the square integrable divergence (cf.~\cite{CF,T}). \gsdfgdsfgdsfg \cole \begin{Lemma} \label{L02} Let $\phi $ be a $3D$ vector field in $L^{2}(\Omega)$,  and $a(x)$ a matrix function with components $a^{\mu \alpha}\in L^{\infty}(\Omega)$. If $ \partial_{\mu} (a^{\mu \alpha} \phi_{\alpha}) \in L^{2}(\Omega)$  and $ a^{\mu \alpha} \phi_{\alpha} \in L^{2}(\Omega)$ for $\mu=1,2,3$, then  $a^{\mu \alpha} \phi_{\alpha} N_{\mu} \in H^{-1/2}(\partial \Omega)$ and   \begin{align}        \Vert a^{\mu \alpha} \phi_{\alpha} N_{\mu}  \Vert_{H^{-1/2}(\partial \Omega)}         \les         \Vert \partial_{\mu} (a^{\mu \alpha} \phi_{\alpha}) \Vert_{L^{2}(\Omega)}           +            \sum_{\mu=1}^{3}             \Vert a^{\mu \alpha} \phi_{\alpha}\Vert_{L^{2}(\Omega)}     .    \llabel{8ymK eu J Mc4 q6d Vz2 00 XnYU tLR9ZZ86}   \end{align} \end{Lemma}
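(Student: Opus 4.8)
The plan is to recognize this as the classical normal-trace theorem for $L^2$ vector fields with $L^2$ divergence, applied to $w^\mu := a^{\mu\alpha}\phi_\alpha$. By hypothesis $w=(w^1,w^2,w^3)\in L^2(\Omega)^3$ and $\dive w = \partial_\mu w^\mu \in L^2(\Omega)$, and the asserted inequality is exactly
\[
\Vert w\cdot N\Vert_{H^{-1/2}(\partial\Omega)} \lesssim \Vert \dive w\Vert_{L^2(\Omega)} + \sum_{\mu=1}^3 \Vert w^\mu\Vert_{L^2(\Omega)},
\]
so it suffices to establish this for an arbitrary such $w$, with the constant depending only on $\Omega$. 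Since $\Omega=\mathbb{T}^2\times[0,1]$ has smooth boundary, the trace operator $\gamma\colon H^1(\Omega)\to H^{1/2}(\partial\Omega)$ admits a bounded linear right inverse $\mathcal{R}\colon H^{1/2}(\partial\Omega)\to H^1(\Omega)$, $\gamma\mathcal{R}=\id$, with $\Vert \mathcal{R}g\Vert_{H^1(\Omega)}\lesssim \Vert g\Vert_{H^{1/2}(\partial\Omega)}$.

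Next I would define $w\cdot N$ as a functional on $H^{1/2}(\partial\Omega)$ through the Green identity: for $g\in H^{1/2}(\partial\Omega)$ set
\[
\langle w\cdot N,\, g\rangle := \int_\Omega (\dive w)\,\mathcal{R}g\,dx + \int_\Omega w^\mu\,\partial_\mu(\mathcal{R}g)\,dx.
\]
The only point requiring genuine care is independence of the chosen extension, i.e.\ that the right-hand side vanishes when the argument lies in $H^1_0(\Omega)$. For $G\in C_c^\infty(\Omega)$ this is precisely the definition of the distributional divergence, $\int_\Omega(\dive w)G = -\int_\Omega w^\mu\partial_\mu G$; since $\dive w,w\in L^2(\Omega)$, both functionals $G\mapsto\int_\Omega(\dive w)G$ and $G\mapsto\int_\Omega w^\mu\partial_\mu G$ are continuous in the $H^1(\Omega)$ norm, so the identity persists for all $G\in H^1_0(\Omega)$ by density of $C_c^\infty(\Omega)$. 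Hence if $\mathcal{R}_1 g,\mathcal{R}_2 g$ are two $H^1$ extensions of $g$, their difference lies in $H^1_0(\Omega)$ and $\langle w\cdot N,g\rangle$ is well defined; when $w$ is smooth it coincides with $\int_{\partial\Omega}(w\cdot N)g$, which is the motivation.

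Finally the bound is immediate from Cauchy--Schwarz:
\[
|\langle w\cdot N,g\rangle| \le \Vert \dive w\Vert_{L^2(\Omega)}\Vert \mathcal{R}g\Vert_{L^2(\Omega)} + \sum_{\mu=1}^3 \Vert w^\mu\Vert_{L^2(\Omega)}\Vert \partial_\mu(\mathcal{R}g)\Vert_{L^2(\Omega)} \le \Big(\Vert \dive w\Vert_{L^2(\Omega)} + \sum_{\mu=1}^3\Vert w^\mu\Vert_{L^2(\Omega)}\Big)\Vert \mathcal{R}g\Vert_{H^1(\Omega)},
\]
and using $\Vert \mathcal{R}g\Vert_{H^1(\Omega)}\lesssim\Vert g\Vert_{H^{1/2}(\partial\Omega)}$ and taking the supremum over $g$ with $\Vert g\Vert_{H^{1/2}(\partial\Omega)}\le 1$ shows $w\cdot N\in H^{-1/2}(\partial\Omega)=(H^{1/2}(\partial\Omega))^{*}$ with the claimed norm bound. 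Substituting back $w^\mu=a^{\mu\alpha}\phi_\alpha$ recovers the displayed inequality. I do not anticipate a substantive obstacle here: the argument is standard, and the only delicate point, the consistency of the boundary pairing under change of extension, is dispatched by the density argument above; one should merely take a little care that the smooth-domain trace/extension theory is invoked correctly for $\Omega=\mathbb{T}^2\times[0,1]$ (whose boundary $\Gamma_0\sqcup\Gamma_1$ is a disjoint union of smooth tori).
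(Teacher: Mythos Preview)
Your proof is correct and is the standard argument for the normal-trace theorem in $H(\dive)$. Note, however, that the paper does not actually prove Lemma~\ref{L02}: it is stated ``for reference'' with a citation (cf.~\cite{CF,T}) and no proof is given. So there is nothing to compare against; your write-up simply supplies the classical justification that the paper omits.
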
 \colb \gsdfgdsfgdsfg \gsdfgdsfgdsfg Next, we derive elliptic estimates satisfied by the  Lagrangian pressure $q$ and its time derivative $\partial_{t}q$. \cole \begin{Lemma} \label{L04} (i)   For the Lagrangian pressure $q$, we have   \begin{align}    \begin{split}    \Vert q \Vert_{H^{2.5-\nu/2}}         &\les           \Vert v\Vert_{H^{1.5}}          \Vert v\Vert_{H^{2.5-\nu/2}}             + \Vert \partial_{t}v \Vert_{H^{1.5}}^{1-\nu/3} 	      \Vert \partial_{t}v \Vert_{L^2}^{\nu/3}          +            \Vert q(0)\Vert_{H^{1}}          +1          +  \int_{0}^{t}\Vert \partial_{t}q\Vert_{H^{1}}    .   \end{split}    \label{ZZ22}   \end{align} \noindent (ii)  For the time derivative of the Lagrangian pressure, we have   \begin{align}   \begin{split}    \Vert \partial_{t}q \Vert_{H^{1}}      &\les      \Vert          \partial_{t}^2 v      \Vert_{L^2}      +         \Vert \partial_{t}v\Vert_{L^2}^{(1-\nu)/3}       \Vert \partial_{t}v\Vert_{H^{1.5}}^{(2+\nu)/3}       \Vert v\Vert_{H^{2.5-\nu/2}}      +         \Vert v\Vert_{H^{1.5}}^{(3-3\nu)/(2-\nu)}         \Vert v\Vert_{H^{2.5-\nu/2}}^{3/(2-\nu)} \\&\indeq      +     \Vert v\Vert_{H^{2.5-\nu/2}}      \Vert q\Vert_{H^{1}}^{(2-2\nu-2\delta_0)/(3-\nu)}      \Vert q\Vert_{H^{2.5-\nu/2}}^{(1+\nu+2\delta_0)/(3-\nu)}     \\&\indeq      +        \Vert v\Vert_{H^{3-\nu}}^{\nu/(1-\nu)}        \Vert v\Vert_{H^{2.5-\nu/2}}^{(1-2\nu)/(1-\nu)}      (     \Vert q\Vert_{H^{1}}+1)    .    \label{ZZ23}   \end{split}   \end{align} \end{Lemma}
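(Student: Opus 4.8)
\medskip
\noindent\textbf{Proof proposal.}
The starting point for both parts is an elliptic equation for $q$. Applying $a^{\nu\alpha}\partial_{\nu}$ to the momentum equation \eqref{ZZ13}, using the Piola-type identity $\partial_{\mu}a^{\mu\alpha}=0$ (which holds since $\det\nabla\eta=1$ by incompressibility), and using the time-differentiated divergence constraint \eqref{ZZ14} (which gives $a^{\nu\alpha}\partial_{\nu}\partial_{t}v^{\alpha}=-(\partial_{t}a^{\nu\alpha})\partial_{\nu}v^{\alpha}$), the first-order terms in $q$ cancel and one obtains the divergence-form equation $\partial_{\mu}\big(a^{\mu\alpha}a^{\nu\alpha}\partial_{\nu}q\big)=(\partial_{t}a^{\nu\alpha})\partial_{\nu}v^{\alpha}$ in $\Omega$. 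For the boundary condition I would \emph{not} use the curvature relation \eqref{ZZ16} (which would force control of $\Delta_{g}\eta$, i.e.\ of $\eta$ at a regularity not available), but instead take the conormal trace of \eqref{ZZ13} itself: the natural boundary operator is $N_{\mu}a^{\mu\alpha}a^{\nu\alpha}\partial_{\nu}q=(a^{\mu\alpha}N_{\mu})(a^{\nu\alpha}\partial_{\nu}q)=-(a^{\mu\alpha}N_{\mu})\partial_{t}v^{\alpha}$, which vanishes on $\Gamma_{0}$ (there $\eta=\id$, $a=I$, and $v\cdot N=0$ forces $N\cdot\partial_{t}v=0$) and equals $-(a^{\mu\alpha}N_{\mu})\partial_{t}v^{\alpha}$ on $\Gamma_{1}$. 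Thus $q$ solves a Neumann problem for an operator with coefficients $a^{\mu\alpha}a^{\nu\alpha}$ that, by Lemma~\ref{L01}(iv), form an arbitrarily small perturbation of the identity once $T$ is small.

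For part (i) I would run elliptic regularity for this Neumann problem at the fractional level $H^{2.5-\nu/2}$, writing the operator as $\Delta$ plus a small divergence-form perturbation and commuting fractional derivatives past the rough coefficients (product and commutator estimates, with $a$ controlled in $H^{2-\nu}$ by Lemma~\ref{L01}). Absorbing the small-coefficient contributions into the left side gives $\Vert q\Vert_{H^{2.5-\nu/2}}\les\Vert(\partial_{t}a)\cdot\nabla v\Vert_{H^{0.5-\nu/2}}+\Vert a^{T}N\cdot\partial_{t}v\Vert_{H^{1-\nu/2}(\partial\Omega)}+\Vert q\Vert_{L^{2}}+(\text{l.o.t.})$. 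Using $\partial_{t}a=-a(\nabla v)a$ and a Sobolev product estimate (one factor at regularity $1.5-\nu/2$, the other at $0.5$), the first term is $\les\Vert v\Vert_{H^{1.5}}\Vert v\Vert_{H^{2.5-\nu/2}}$; by the trace theorem and Lemma~\ref{L01} the second is $\les\Vert\partial_{t}v\Vert_{H^{1.5-\nu/2}(\Omega)}$, and interpolating $H^{1.5-\nu/2}$ between $H^{1.5}$ and $L^{2}$ produces exactly $\Vert\partial_{t}v\Vert_{H^{1.5}}^{1-\nu/3}\Vert\partial_{t}v\Vert_{L^{2}}^{\nu/3}$; and the additive constant left undetermined by the Neumann data is pinned by $q(t)=q(0)+\int_{0}^{t}\partial_{t}q$, so $\Vert q\Vert_{L^{2}}\le\Vert q(0)\Vert_{H^{1}}+\int_{0}^{t}\Vert\partial_{t}q\Vert_{H^{1}}$, while the bounded contribution coming from the surface-tension trace accounts for the $+1$. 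Collecting these yields \eqref{ZZ22}.

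For part (ii) I would differentiate the elliptic equation in time and estimate $\partial_{t}q$ in $H^{1}$. The Neumann data for $\partial_{t}q$ on $\Gamma_{1}$ is $-(a^{\mu\alpha}N_{\mu})\partial_{t}^{2}v^{\alpha}-(\partial_{t}a^{\mu\alpha})N_{\mu}\partial_{t}v^{\alpha}$; since $\partial_{t}^{2}v$ lies only in $L^{2}(\Omega)$, its conormal trace is not accessible via the usual trace theorem, and here one invokes Lemma~\ref{L02} with $\phi=\partial_{t}^{2}v$: differentiating the incompressibility condition twice in $t$ and using Piola expresses $\partial_{\mu}(a^{\mu\alpha}\partial_{t}^{2}v^{\alpha})$ through $\partial_{t}^{2}a,\partial_{t}a$ and $\nabla v,\nabla\partial_{t}v$, which is in $L^{2}$ by Lemma~\ref{L01}(iii), so $\Vert a^{T}N\cdot\partial_{t}^{2}v\Vert_{H^{-1/2}(\partial\Omega)}\les\Vert\partial_{t}^{2}v\Vert_{L^{2}}+(\text{l.o.t.})$; this gives the leading $\Vert\partial_{t}^{2}v\Vert_{L^{2}}$ term. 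The remaining contributions come from the interior right-hand side and from the terms $\partial_{t}(a^{\mu\alpha}a^{\nu\alpha})\partial_{\nu}q\sim(\nabla v)(\nabla q)$: splitting $\Vert(\nabla v)(\nabla q)\Vert_{L^{2}}$ by H\"older and Sobolev embeddings in several ways and interpolating gives, for one choice, $\Vert v\Vert_{H^{2.5-\nu/2}}\Vert q\Vert_{H^{1}}^{(2-2\nu-2\delta_{0})/(3-\nu)}\Vert q\Vert_{H^{2.5-\nu/2}}^{(1+\nu+2\delta_{0})/(3-\nu)}$ (the small $\delta_{0}>0$ making the relevant embedding strict), for another the term $\Vert v\Vert_{H^{3-\nu}}^{\nu/(1-\nu)}\Vert v\Vert_{H^{2.5-\nu/2}}^{(1-2\nu)/(1-\nu)}(\Vert q\Vert_{H^{1}}+1)$ (bounding $\nabla v$ in $L^{\infty}$ by a norm interpolated between $H^{2.5-\nu/2}$ and $H^{3-\nu}$ and $\nabla q$ in $L^{2}$); substituting the bound \eqref{ZZ22} for $q$ where it occurs then produces the cubic term $\Vert v\Vert_{H^{1.5}}^{(3-3\nu)/(2-\nu)}\Vert v\Vert_{H^{2.5-\nu/2}}^{3/(2-\nu)}$ and the mixed term $\Vert\partial_{t}v\Vert_{L^{2}}^{(1-\nu)/3}\Vert\partial_{t}v\Vert_{H^{1.5}}^{(2+\nu)/3}\Vert v\Vert_{H^{2.5-\nu/2}}$, while $\Vert\partial_{t}q\Vert_{L^{2}}$ is controlled by $\Vert\nabla\partial_{t}q\Vert_{L^{2}}$ together with $\partial_{t}q|_{\Gamma_{1}}=\partial_{t}(\sigma\cH)$, contributing only terms already present. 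This gives \eqref{ZZ23}.

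The main obstacle is twofold. First, carrying out the elliptic estimates at the fractional level $H^{2.5-\nu/2}$ with rough, non-constant coefficients: one must commute fractional derivatives past coefficients that are only in $H^{2-\nu}$, precisely the low-regularity regime where the ellipticity of the operator cannot be exploited as freely as in higher regularity. Second, and really the crux, is the exponent bookkeeping: every product has to be split so that after the later time integration in the energy argument each top-order norm ($\Vert q\Vert_{H^{2.5-\nu/2}}$, $\Vert v\Vert_{H^{3-\nu}}$, $\Vert\partial_{t}v\Vert_{H^{1.5}}$, $\Vert\partial_{t}^{2}v\Vert_{L^{2}}$) occurs on the right with a power strictly less than one, hence absorbable by Young's inequality; the precise fractional exponents in \eqref{ZZ22}--\eqref{ZZ23} and the role of the auxiliary parameter $\delta_{0}$ are dictated by this requirement, and verifying that a consistent choice exists for all $\nu\in[0,1/2)$ is the delicate point.
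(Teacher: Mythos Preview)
Your overall strategy---derive the divergence-form elliptic equation for $q$, use a Neumann condition from \eqref{ZZ13}, invoke Lemma~\ref{L02} for the $H^{-1/2}$ trace of $a^{3\alpha}\partial_t^2 v_\alpha$, and interpolate---is the same as the paper's. But two of your attributions are off, and one of them matters.

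First, you say you will \emph{not} use the curvature condition \eqref{ZZ16}. The paper does use it, and in fact you do too without noticing. The elliptic estimate the paper invokes reads
\[
\Vert q\Vert_{H^{2.5-\nu/2}}\les \Vert \dive f\Vert_{H^{0.5-\nu/2}}+\Vert h_1\Vert_{H^{1-\nu/2}(\Gamma_1)}+\Vert h_2\Vert_{H^{1-\nu/2}(\Gamma_0)}+\Vert q\Vert_{L^2(\Gamma_1)},
\]
and the low-order term $\Vert q\Vert_{L^2(\Gamma_1)}$ is bounded via the Dirichlet value $q=(1-a^{33})q-\partial_i(\sqrt g\,g^{ij}\partial_j\eta^3)$ coming from \eqref{ZZ16}; the boundedness of $\partial_i(\sqrt g\,g^{ij}\partial_j\eta^3)$ by Lemma~\ref{L01} is exactly the ``$+1$''. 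In part~(ii) the analogous term $\Vert\partial_t q\Vert_{L^2(\Gamma_1)}$ is bounded via $\partial_t q=\partial_t(1-a^{33})q+(1-a^{33})\partial_t q-\partial_i\partial_t(\sqrt g\,g^{ij}\partial_j\eta^3)$ on $\Gamma_1$, and the last piece costs $\Vert v\Vert_{H^{2.5}}$, which after interpolation is precisely the term $\Vert v\Vert_{H^{3-\nu}}^{\nu/(1-\nu)}\Vert v\Vert_{H^{2.5-\nu/2}}^{(1-2\nu)/(1-\nu)}(\Vert q\Vert_{H^1}+1)$. This is \emph{not} an interior $(\nabla v)(\nabla q)$ term as you claim; an $L^\infty$--$L^2$ split of $(\nabla v)(\nabla q)$ neither produces the $+1$ nor lands cleanly on these exponents. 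When you write ``$\partial_t q|_{\Gamma_1}=\partial_t(\sigma\cH)$, contributing only terms already present,'' you are invoking \eqref{ZZ16} and this is where the top-order $\Vert v\Vert_{H^{3-\nu}}$ actually enters.

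Second, the terms $\Vert\partial_t v\Vert_{L^2}^{(1-\nu)/3}\Vert\partial_t v\Vert_{H^{1.5}}^{(2+\nu)/3}\Vert v\Vert_{H^{2.5-\nu/2}}$ and $\Vert v\Vert_{H^{1.5}}^{(3-3\nu)/(2-\nu)}\Vert v\Vert_{H^{2.5-\nu/2}}^{3/(2-\nu)}$ do \emph{not} arise from substituting \eqref{ZZ22} into \eqref{ZZ23}. They come from the Neumann trace $\Vert a^{3\alpha}\partial_t^2 v_\alpha\Vert_{H^{-1/2}(\Gamma_1)}$: after Lemma~\ref{L02} and the twice-differentiated divergence condition one is left with $\Vert\partial_t^2 a\Vert_{H^{\nu/2}}\Vert v\Vert_{H^{2.5-\nu/2}}$, and Lemma~\ref{L01}(iii) gives $\Vert\partial_t^2 a\Vert_{H^{\nu/2}}\les\Vert\partial_t v\Vert_{H^{1+\nu/2}}+\Vert v\Vert_{H^2}\Vert v\Vert_{H^{1.5+\nu/2}}$, whose two pieces interpolate to exactly those expressions.
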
 \colb \gsdfgdsfgdsfg Above and in the sequel, $\delta_0>0$ denotes an arbitrarily small constant. In most places it appears when bounding the $L^\infty$ norm of a quantity with a suitable Sobolev norm. \gsdfgdsfgdsfg The exponent $2.5-\nu/2$  in \eqref{ZZ22} is not the highest regularity of the pressure one may obtain (which is $3-\nu$). It is chosen because it is the highest Sobolev exponent for $q$ which can be estimated in terms of $\Vert v\Vert_{H^{2.5-\nu/2}}$, for which in turn we have control based on  Section~\ref{sec07} and  the properties \eqref{EQ80a} and \eqref{EQ38} below. \gsdfgdsfgdsfg \gsdfgdsfgdsfg \gsdfgdsfgdsfg \gsdfgdsfgdsfg Using the notation \eqref{EQ13} and \eqref{EQ13a} and introducing   \begin{equation}    F    = \Vert v\Vert_{H^{3-\nu}}    \llabel{ GYjPXv FO V r6W 1zU K1W bP ToaW JEQ121}   \end{equation} we may rewrite \eqref{ZZ22} and \eqref{ZZ23} in simpler forms as   \begin{align}     \begin{split}    G    &=    \Vert q \Vert_{H^{2.5-\nu/2}}      \les      P_0      +        \left(         P_0         + \int_{0}^{t} P        \right)        (E_0           + E_1^{1-\nu/3}        )          +  \int_{0}^{t}P   \end{split}    \llabel{JuK nxBLnd 0f t DEb Mmj 4lo HY yhZEQ14}   \end{align} and   \begin{align}   \begin{split}    H &=    \Vert \partial_{t}q \Vert_{H^{1}}      \les      E_1      +      \left(P_0+\int_{0}^{t}P\right)      \\&\indeq\indeq\indeq\indeq\times      \Bigl(       E_1^{(2+\nu)/3}       E_0      +       E_0^{3/(2-\nu)}      +      E_0      G^{(1+\nu+2\delta_0)/(3-\nu)}      +        F^{\nu/(1-\nu)}        E_0^{(1-2\nu)/(1-\nu)}      \Bigr)    .    \llabel{y MjM9 1zQS4p 7z 8 eKa 9h0 Jrb ac ZZ23a}   \end{split}   \end{align} Above and in the sequel,  $P_0$ denotes a generic polynomial in  $\Vert v_0\Vert_{H^{3-\nu}}$, $\Vert \partial_{t}v(0)\Vert_{H^{1.5}}$, and  $\Vert \partial_{t}^2v(0)\Vert_{L^2}$, while $P$   denotes a generic polynomial in $\Vert v \Vert_{H^{3-\nu}}$,  $ \Vert \partial_{t}v \Vert_{H^{1.5}}$,  $\Vert q \Vert_{H^{2.5-\nu/2}}$, $ \Vert \partial_{t}q \Vert_{H^{1}}$, and $\Vert \partial_{t}^2v\Vert_{L^2}$. Using the notation \eqref{EQ00} and $\nu<1/2$, we then have   \begin{align}     \begin{split}    G    &     \les        \left(         P_0         + \int_{0}^{t} P        \right)           E^{(3-\nu)/3}   \end{split}    \label{EQ127}   \end{align} and   \begin{align}   \begin{split}    H &     \les      E      +      \left(P_0+\int_{0}^{t}P\right)      \\&\indeq\indeq\indeq\times      \Bigl(      E^{(7+2\nu)/6}      +       E^{3/(4-2\nu)}      +      E^{1/2}      G^{(1+\nu+2\delta_0)/(3-\nu)}      +        F^{\nu/(1-\nu)}        E^{(1-2\nu)/(2-2\nu)}      \Bigr)    .    \llabel{ekci rexG 0z4n3x z0 Q OWS vFj 3jL EQ128}   \end{split}   \end{align} Since $(7+2\nu)/6\ge 3/(4-2\nu)$, we get   \begin{align}   \begin{split}    H &     \les      E      +      \left(P_0+\int_{0}^{t}P\right)      \Bigl(      E^{(7+2\nu)/6}      +      E^{1/2}      G^{(1+\nu+2\delta_0)/(3-\nu)}      +        F^{\nu/(1-\nu)}        E^{(1-2\nu)/(2-2\nu)}      \Bigr)    ,    \label{EQ138}   \end{split}   \end{align} where, as pointed out above, $\delta_0>0$ denotes an arbitrarily small constant. \gsdfgdsfgdsfg \gsdfgdsfgdsfg \gsdfgdsfgdsfg \gsdfgdsfgdsfg \gsdfgdsfgdsfg \gsdfgdsfgdsfg \gsdfgdsfgdsfg Before the proof of the lemma, we recall the Piola identity   \begin{align}    \partial_{\mu}  a^{\mu \alpha} = 0     \label{ZZ00}   \end{align} (cf.~\cite[p.~462]{Ev}). \gsdfgdsfgdsfg \begin{proof}[Proof of Lemma~\ref{L04}] First, we apply $a^{\lambda \alpha}  \partial_{\lambda}$ to the equation~\eqref{ZZ13}  and obtain   \begin{align}    a^{\lambda \alpha} \partial_{\lambda} ( {a^{\mu}}_{\alpha} \partial_{\mu} q )        = - a^{\lambda \alpha} \partial_{\lambda} \partial_{t}v_{\alpha}        = \partial_{t} a^{\lambda \alpha} \partial_{\lambda} v_{\alpha}     ,    \label{ZZ37}   \end{align} where we used the divergence free condition \eqref{ZZ14} in the last step. Isolating $\Delta    q$, we obtain the  Poisson equation    \begin{align}   \begin{split}     \Delta q  &= \partial_{t}a^{\lambda \alpha} \partial_{\lambda} v_{\alpha}          +          (\delta^{\lambda \alpha}- a^{\lambda \alpha })              \partial_{\lambda} (\delta^{\mu}_{\alpha} \partial_{\mu} q)           + a^{\lambda \alpha}              \partial_{\lambda}                       \bigl(                          (\delta^{\mu}_{\alpha} - {a^{\mu}}_{\alpha})                          \partial_{\mu} q                      \bigr)      \\&       =          \partial_{\lambda}( \partial_{t}a^{\lambda \alpha}  v_{\alpha} )         +          \partial_{\lambda}          (             (\delta^{\lambda \alpha}- a^{\lambda \alpha })                \partial_{\alpha} q           )           +               \partial_{\lambda}                       \bigl(                        a^{\lambda \alpha}                          (\delta^{\mu}_{\alpha} - {a^{\mu}}_{\alpha})                          \partial_{\mu} q                      \bigr)      \\&       =        \partial_{\lambda}        \Bigl(          \partial_{t}a^{\lambda \alpha}  v_{\alpha}          +             (\delta^{\lambda \alpha}- a^{\lambda \alpha })                \partial_{\alpha} q           +                         a^{\lambda \alpha}                          (\delta^{\mu}_{\alpha} - {a^{\mu}}_{\alpha})                          \partial_{\mu} q            \Bigr)      =: \partial_{\lambda}f^{\lambda}   \end{split}    \llabel{hW XUIU 21iI AwJtI3 Rb W a90 I7r zZZ50}   \end{align} on $\Omega$, in addition to the boundary conditions   \begin{align}    \partial_{3} q       = (\delta^{\alpha 3} - a^{\alpha 3 }) \partial_{\alpha} q  - \partial_{t} v^{3} =:  h_{1}     \inon{on $\Gamma_1$}    \label{EQ219}   \end{align} and   \begin{align}    \partial_{3} q =  (\delta^{\alpha 3} - a^{\alpha 3 }) \partial_{\alpha} q =:  h_{2}    \inon{on~$\Gamma_{0}$}    ,    \label{EQ220}   \end{align} which result from restricting \eqref{ZZ13}  to  $\Gamma_1$ and $\Gamma_{0}$, respectively. Moreover, from the boundary condition \eqref{ZZ16}, we have   \begin{align}    q =    (1- a^{33}  )q     -      \partial_{i}(\sqrt{g} g^{ij} \partial_{j} \eta^{3} )    \inon{on~$\Gamma_{1} \times [0,T] $}    .    \label{ZZ21}   \end{align} We now invoke the estimate for  $q$ from \cite{IgorMihaelaSurfaceTension, DisconziKukavicaIncompressible} whereby   \begin{align}    \Vert q \Vert_{H^{2.5-\nu/2}}      \les     \Vert \partial_{\lambda}f^{\lambda} \Vert_{H^{0.5-\nu/2}}      + \Vert h_{1} \Vert_{H^{1-\nu/2}(\Gamma_1)}     +\Vert h_{2} \Vert_{H^{1-\nu/2}(\Gamma_0)}      + \Vert q \Vert_{L^{2}(\Gamma_{1})}    .    \label{ZZ87}   \end{align}  Note that   \begin{align}    \begin{split}     \Vert \partial_{\lambda}f^{\lambda} \Vert_{H^{0.5-\nu/2}}      &\les     \Vert             \partial_{t}a^{\lambda \alpha} \partial_{\lambda} v_{\alpha}      \Vert_{H^{0.5-\nu/2}}         +     \Vert             (\delta^{\lambda \alpha}- a^{\lambda \alpha })                \partial_{\alpha}  \partial_{\lambda}q     \Vert_{H^{0.5-\nu/2}}         \\&\indeq     +     \Vert                        a^{\lambda \alpha}                            \partial_{\lambda} {a^{\mu}}_{\alpha}                          \partial_{\mu} q     \Vert_{H^{0.5-\nu/2}}         +     \Vert                        a^{\lambda \alpha}                          (\delta^{\mu}_{\alpha} - {a^{\mu}}_{\alpha})                 \partial_{\lambda}                         \partial_{\mu} q     \Vert_{H^{0.5-\nu/2}}         \\&     \les     \Vert             \partial_{t}a^{\lambda \alpha} \partial_{\lambda} v_{\alpha}      \Vert_{H^{0.5-\nu/2}}         +     \Vert             \delta^{\lambda \alpha}- a^{\lambda \alpha }     \Vert_{H^{1.5+\delta_0}}     \Vert                \partial_{\alpha}  \partial_{\lambda}q     \Vert_{H^{0.5-\nu/2}}         \\&\indeq     +     \Vert                        a^{\lambda \alpha}     \Vert_{H^{1.5+\delta_0}}     \Vert                            \partial_{\lambda} {a^{\mu}}_{\alpha}                          \partial_{\mu} q     \Vert_{H^{0.5-\nu/2}}         +     \Vert                        a^{\lambda \alpha}     \Vert_{H^{1.5+\delta_0}}     \Vert                          \delta^{\mu}_{\alpha} - {a^{\mu}}_{\alpha}     \Vert_{H^{1.5+\delta_0}}     \Vert                 \partial_{\lambda}                         \partial_{\mu} q     \Vert_{H^{0.5-\nu/2}}         \\&     \les     \Vert             \partial_{t}a^{\lambda \alpha} \partial_{\lambda} v_{\alpha}      \Vert_{H^{0.5-\nu/2}}         +     \sum_{\alpha,\lambda}     \epsilon_0     \Vert                \partial_{\alpha}  \partial_{\lambda}q     \Vert_{H^{0.5-\nu/2}}         \\&\indeq     +     \sum_{\alpha,\lambda}     \Vert                            \partial_{\lambda} {a^{\mu}}_{\alpha}                          \partial_{\mu} q     \Vert_{H^{0.5-\nu/2}}         +     \epsilon_0     \sum_{\lambda,\mu}     \Vert                 \partial_{\lambda}  \partial_{\mu} q     \Vert_{H^{0.5-\nu/2}}        ,    \end{split}    \llabel{AI qI 3UEl UJG7 tLtUXz w4 K QNE TvEQ15}   \end{align} where we used  a multiplicative Sobolev inequality $     \Vert f g\Vert_{H^{r}}\les                 \Vert f\Vert_{H^{1.5+\delta_0}}\Vert g\Vert_{H^{r}}$ for $0\le r\le 1.5$. Also, $\epsilon_0>0$ denotes everywhere a constant which can be made arbitrarily small by choosing $T>0$ sufficiently small as in Lemma~\ref{L01}(iv) above. Therefore,   
\begin{align}    \begin{split}     \Vert \partial_{\lambda}f^{\lambda} \Vert_{H^{0.5-\nu/2}}      &\les     \Vert \partial_{t} a \Vert_{H^{1}}      \Vert v \Vert_{H^{2-\nu/2}}     +     \epsilon_0     \Vert q\Vert_{H^{2.5-\nu/2}}     +     \Vert a\Vert_{H^{2-\nu}}     \Vert q\Vert_{H^{2+\nu/2}}    .    \end{split}    \llabel{X zqW au jEMe nYlN IzLGxg B3 A uJ8EQ163}   \end{align} Using \eqref{ZZ87} with \eqref{EQ219}--\eqref{ZZ21}, we get   \begin{align}    \begin{split}      \Vert q \Vert_{H^{2.5-\nu/2}}         &\les      \Vert \partial_{t} a \Vert_{H^{1}}      \Vert v \Vert_{H^{2-\nu/2}}     +     \epsilon_0     \Vert q\Vert_{H^{2.5-\nu/2}}     +     \Vert q\Vert_{H^{2+\nu/2}}        \\&\indeq             + \Vert I-a \Vert_{H^{1 +\delta_0}(\Gamma_{1})}                 \Vert \nabla q\Vert_{H^{1-\nu/2}(\Gamma_{1}) }             + \Vert \partial_{t}v \Vert_{H^{1-\nu/2}(\Gamma_{1}) }             + \Vert I-a \Vert_{H^{1 +\delta_0}(\Gamma_{0})}                 \Vert \nabla q\Vert_{H^{1-\nu/2}(\Gamma_{0}) }        \\&\indeq             + \Vert 1- a^{33} \Vert_{H^{1 +\delta_0}(\Gamma_{1})}                    \Vert q \Vert_{L^{2}(\Gamma_{1})}             +               \Vert                  \partial_{i}(\sqrt{g} g^{ij} \partial_{j} \eta^{3} )                  \Vert_{L^2(\Gamma_1)}    ,    \llabel{ 6VS 6Rc PJ 8OXW w8im tcKZEz Ho p ZZ54}    \end{split}   \end{align} whence, by Lemma~\ref{L01} (in particular $\Vert a-I\Vert_{H^{1.5+\delta_0}}\le \epsilon_0$),   \begin{align}    \begin{split}      \Vert q \Vert_{H^{2.5-\nu/2}}          &\les              \Vert v \Vert_{H^{2}}             \Vert v \Vert_{H^{2-\nu/2}}             + \epsilon_0               \Vert q\Vert_{H^{2.5-\nu/2}}             +             \Vert q\Vert_{H^{2+\nu/2}}             + \Vert \partial_{t}v \Vert_{H^{1.5-\nu/2}}            + Q(\Vert D\eta\Vert_{L^{\infty}(\Gamma_1)})               \Vert \eta\Vert_{H^{2}(\Gamma_1)}         \\& 	\les           \Vert v\Vert_{H^{1.5}}          \Vert v\Vert_{H^{2.5-\nu/2}}             + \epsilon_0               \Vert q\Vert_{H^{2.5-\nu/2}}             +             \Vert q\Vert_{H^{2+\nu/2}}         \\&\indeq             + \Vert \partial_{t}v \Vert_{H^{1.5}}^{1-\nu/3} 	      \Vert \partial_{t}v \Vert_{L^2}^{\nu/3}            + Q(\Vert D\eta\Vert_{L^{\infty}(\Gamma_1)})               \Vert \eta\Vert_{H^{2}(\Gamma_1)}    \llabel{84G 1gS As0 PC owMI 2fLK TdD60y nHZZ55}    \end{split}   \end{align} where $Q$ denotes a rational function in the indicated argument and where we used   \begin{equation}    \Vert v\Vert_{H^{2}}    \les    \Vert v\Vert_{H^{1.5}}^{(1-\nu)/(2-\nu)}    \Vert v\Vert_{H^{2.5-\nu/2}}^{1/(2-\nu)}    \label{EQ118}   \end{equation} and $    \Vert v\Vert_{H^{2-\nu/2}}    \les    \Vert v\Vert_{H^{1.5}}^{1/(2-\nu)}    \Vert v\Vert_{H^{2.5-\nu/2}}^{(1-\nu)/(2-\nu)} $ in the last step. Finally, note that $Q(\Vert D\eta\Vert_{L^{\infty}(\Gamma_1)})               \Vert \eta\Vert_{H^{2}(\Gamma_1)}\les 1$ and   \begin{align}   \begin{split}          \Vert q \Vert_{H^{2+\nu/2}}       \end{split}   \leq    \epsilon_0       \Vert q \Vert_{H^{2.5-\nu/2}}       + C\Vert q\Vert_{H^{1}}    \leq                \epsilon_0       \Vert q \Vert_{H^{2.5-\nu/2}}       + C\Vert q(0)\Vert_{H^{1}}       + C\int_{0}^{t}\Vert \partial_{t}q\Vert_{H^{1}}          .    \llabel{ g 7lk NFj JLq Oo Qvfk fZBN G3o1DgEQ165}   \end{align}   \gsdfgdsfgdsfg   \gsdfgdsfgdsfg \gsdfgdsfgdsfg (ii)  Differentiating $      \partial_{\lambda}        ( a^{\lambda \alpha}  {a^{\mu}}_{\alpha} \partial_{\mu} q )        = \partial_{\lambda}(\partial_{t} a^{\lambda \alpha} v_{\alpha} ) $, we obtain that the time derivative of the Lagrangian pressure satisfies   \begin{align}   \begin{split}      \partial_{\lambda}       (a^{\lambda\alpha} {a^{\mu}}_{\alpha} \partial_{\mu}\partial_{t}q)        &=             \partial_{\lambda} (\partial_{t}^2a^{\lambda \alpha} v_{\alpha} )         +  \partial_{\lambda} (\partial_{t}a^{\lambda \alpha} \partial_{t} v_{\alpha} )        -          \partial_{\lambda} (                                \partial_{t}a^{\lambda \alpha}                                   \partial_{\alpha} q                            )       \\&\indeq        +       \partial_{\lambda} (                              \partial_{t}a^{\lambda \alpha}                                   (\delta^{\mu}_{\alpha}-{a^{\mu}}_{\alpha}) \partial_{\mu} q                            )        -       \partial_{\lambda} (                              a^{\lambda \alpha}                                  \partial_{t} {a^{\mu}}_{\alpha} \partial_{\mu} q                            )     =:       \partial_{\lambda} \tilde f^{\lambda}   \end{split}    \label{ZZ58}   \end{align} in $\Omega$. The boundary conditions, which are deduced from \eqref{ZZ13} and \eqref{ZZ21}, read   \begin{align*}      a^{3\alpha} {a^{\mu}}_{\alpha} \partial_{\mu}\partial_{t}q      =       - a^{3\alpha}\partial_{t}^2v_{\alpha}      - \partial_{t}a^{3\alpha}\partial_{t}v_{\alpha}      - \partial_{t} (                         a^{3\alpha} {a^{\mu}}_{\alpha}                       )        \partial_{\mu}q      =:  \tilde h_{1}     \inon{on $\Gamma_1$}   \end{align*} and   \begin{align*}      a^{3\alpha} {a^{\mu}}_{\alpha} \partial_{\mu}\partial_{t}q      = - a^{3\alpha}\partial_{t}^2v_{\alpha}      - \partial_{t}a^{3\alpha}\partial_{t}v_{\alpha}      - \partial_{t} (                         a^{3\alpha} {a^{\mu}}_{\alpha}                       )        \partial_{\mu}q      =:  \tilde h_{2}     \inon{on $\Gamma_0$}   \end{align*} with   \begin{align*}    \partial_{t}    q =    \partial_{t}(1- a^{33}  )q     +    (1- a^{33}  )\partial_{t}q     -  \partial_{i}\partial_{t}(\sqrt{g} g^{ij} \partial_{j} \eta^{3} )    \inon{on~$\Gamma_{1} \times [0,T] $}    .   \end{align*} Thus we may invoke the estimate     \begin{align}      \Vert \partial_{t}q \Vert_{H^{1}}            \les \Vert \tilde f \Vert_{L^2}                +   \Vert \tilde{h}_{1} \Vert_{H^{-1/2}(\Gamma_{1})}                +   \Vert \tilde h_{2} \Vert_{H^{-1/2}(\Gamma_{0})}                + \Vert \partial_{t}q\Vert_{L^{2}(\Gamma_{1})}      \llabel{ Cn 9 hyU h5V SP5 z6 1qvQ wceU dVJZZ62}   \end{align} from \cite{IgorMihaelaSurfaceTension} and obtain   \begin{align}    \begin{split}     \Vert \partial_{t}q \Vert_{H^{1}}         &\les        \sum_{\lambda}         \Bigl(           \Vert \partial_{t}^2a^{\lambda \alpha} v_{\alpha}\Vert_{L^2}         +  \Vert               \partial_{t}a^{\lambda \alpha} \partial_{t} v_{\alpha}             \Vert_{L^2}        +          \Vert                \partial_{t}a^{\lambda \alpha}                       \partial_{\alpha} q           \Vert_{L^2}        \\&\indeq\indeq\indeq\indeq        +       \Vert                             \partial_{t}a^{\lambda \alpha}                                   (\delta^{\mu}_{\alpha}-{a^{\mu}}_{\alpha}) \partial_{\mu} q        \Vert_{L^2}       +       \Vert                             a^{\lambda \alpha}                                   \partial_{t}{a^{\mu}}_{\alpha} \partial_{\mu} q        \Vert_{L^2}         \Bigr)      \\&\indeq      +       \Vert  a^{3 \alpha}\partial_{t}^2 v_{\alpha} \Vert_{H^{-1/2}(\Gamma_1)}      + \Vert  \partial_{t} a^{3\alpha} \partial_{t} v_{\alpha}\Vert_{H^{-1/2}(\Gamma_1)}      + \Vert  \partial_{t}(a^{3\alpha}{{a^{\mu}}_{\alpha}})\partial_{\mu}q\Vert_{H^{-1/2}(\Gamma_1)}      \\&\indeq      + \Vert  a^{3 \alpha}\partial_{t}^2 v_{\alpha} \Vert_{H^{-1/2}(\Gamma_0)}      + \Vert  \partial_{t} a^{3\alpha} \partial_{t} v_{\alpha}\Vert_{H^{-1/2}(\Gamma_0)}      + \Vert  \partial_{t}(a^{3\alpha}{{a^{\mu}}_{\alpha}})\partial_{\mu}q\Vert_{H^{-1/2}(\Gamma_0)}     \\&\indeq     + \Vert \partial_{t} a^{33}   q  \Vert_{L^2(\Gamma_1)}     +      \Vert \partial_{i}(\partial_{t}(\sqrt{g} g^{ij}  \partial_{j} \eta^{3}) )   \Vert_{L^2(\Gamma_1)}    .    \end{split}    \label{ZZ56}   \end{align} Denote by $S$ the sum in $\lambda$. Then   \begin{align}    \begin{split}       S       &\les           \Vert \partial_{t}^2a\Vert_{H^{\nu/2}}           \Vert v\Vert_{H^{1.5-\nu/2}}         +  \Vert               \partial_{t}a            \Vert_{H^{1.5-\nu/2}}            \Vert              \partial_{t} v            \Vert_{H^{\nu/2}}        +       \Vert \partial_{t} a\Vert_{H^{1.5-\nu/2}}       \Vert \nabla q\Vert_{H^{\nu/2}}    .    \end{split}    \label{ZZ59}   \end{align} It turns out that all three terms on the right side of \eqref{ZZ59} appear in the upper bounds \eqref{EQ129} and \eqref{EQ131} below  thus not leading to  any additional terms compared to \eqref{EQ129} and \eqref{EQ131}. Next, we estimate  $\Vert \tilde h_1\Vert_{H^{-1/2}(\Gamma_1)}$ (the bound for $\tilde h_2$ is the same). We write   \begin{align}    \begin{split}     \Vert \tilde h_1\Vert_{H^{-1/2}(\Gamma_1)}     &\les      \Vert        a^{3 \alpha} \partial_{t}^2v_{ \alpha}           \Vert_{H^{-1/2}(\Gamma_1)}      +      \Vert        \partial_{t} a^{3 \alpha} \partial_{t}v_{ \alpha}           \Vert_{H^{-1/2}(\Gamma_1)}      +      \Vert         \partial_{t} (a^{3\alpha}{a^{\mu}}_\alpha)\partial_{\mu}q      \Vert_{H^{-1/2}(\Gamma_1)}      \\&      = T_1 + T_2 + T_3    .    \end{split}    \llabel{JsB vX D G4E LHQ HIa PT bMTr sLsm ZZ46}   \end{align} For the first term, we have   \begin{align}    \begin{split}     T_1      &      \les      \sum_{\beta}      \Vert           a^{\beta \alpha} \partial_{t}^2v_{ \alpha}           \Vert_{L^2}      +      \Vert         \partial_{\beta}         (            a^{\beta \alpha} \partial_{t}^2v_{ \alpha}              )      \Vert_{L^2}      =      \sum_{\beta}      \Vert           a^{\beta \alpha} \partial_{t}^2v_{ \alpha}           \Vert_{L^2}      +      \Vert         a^{\beta \alpha}  \partial_{\beta}\partial_{t}^2v_{ \alpha}           \Vert_{L^2}      \\&      \les      \Vert           a               \Vert_{L^\infty}      \Vert          \partial_{t}^2 v      \Vert_{L^2}      +      \Vert         a^{\beta \alpha}  \partial_{\beta}\partial_{t}^2v_{ \alpha}           \Vert_{L^2}     \les      \Vert          \partial_{t}^2 v      \Vert_{L^2}      +      \Vert         \partial_{t}^2 a^{\beta \alpha}  \partial_{\beta}v_{ \alpha}           \Vert_{L^2}      +      \Vert         \partial_{t} a^{\beta \alpha}  \partial_{\beta}\partial_{t}v_{ \alpha}           \Vert_{L^2}     \\&     \les      \Vert          \partial_{t}^2 v      \Vert_{L^2}      +      \Vert         \partial_{t}^2 a      \Vert_{H^{\nu/2}}      \Vert         v      \Vert_{H^{2.5-\nu/2}}      +      \Vert         \partial_{t} a      \Vert_{H^{1.5-\nu/2}}      \Vert         \partial_{t}v      \Vert_{H^{1+\nu/2}}     \\&     \les      \Vert          \partial_{t}^2 v      \Vert_{L^2}      +           \Vert \partial_{t}v\Vert_{H^{1+\nu/2}}         \Vert v\Vert_{H^{2.5-\nu/2}}      +        \Vert v\Vert_{H^{2}}        \Vert v\Vert_{H^{1.5+\nu/2}}        \Vert v\Vert_{H^{2.5-\nu/2}}      +      \Vert         v      \Vert_{H^{2.5-\nu/2}}      \Vert         \partial_{t}v      \Vert_{H^{1+\nu/2}}     \\&     \les      \Vert          \partial_{t}^2 v      \Vert_{L^2}      +           \Vert \partial_{t}v\Vert_{L^2}^{(1-\nu)/3}         \Vert \partial_{t}v\Vert_{H^{1.5}}^{(2+\nu)/3}       \Vert v\Vert_{H^{2.5-\nu/2}}      +         \Vert v\Vert_{H^{1.5}}^{(3-3\nu)/(2-\nu)}         \Vert v\Vert_{H^{2.5-\nu/2}}^{3/(2-\nu)}    ,    \end{split}    \label{EQ129}   \end{align} where we used Lemma~\ref{L02} in the first step and the divergence condition \eqref{ZZ14} in the fourth. Also, we used \eqref{EQ118} and $    \Vert v\Vert_{H^{1.5+\nu/2}}    \les    \Vert v\Vert_{H^{1.5}}^{(2-2\nu)/(2-\nu)}    \Vert v\Vert_{H^{2.5-\nu/2}}^{\nu/(2-\nu)} $. For $T_2$, we  apply Lemma~\ref{L02} and estimate   \begin{align}      \begin{split}        T_2         &\les      \Vert         \partial_{\beta}(\partial_{t} a^{\beta \alpha} \partial_{t}v_{ \alpha}     )      \Vert_{L^2}      +      \sum_{\beta}      \Vert         \partial_{t} a^{\beta \alpha} \partial_{t}v_{ \alpha}           \Vert_{L^2}      =      \Vert        \partial_{t} a^{\beta \alpha} \partial_{\beta}\partial_{t}v_{ \alpha}      \Vert_{L^2}      +      \sum_{\beta}      \Vert         \partial_{t} a^{\beta \alpha} \partial_{t}v_{ \alpha}           \Vert_{L^2}      \\&      \les           \Vert         \partial_{t} a      \Vert_{H^{1.5-\nu/2}}      \Vert         \partial_{t}v      \Vert_{H^{1+\nu/2}}    .      \end{split}     \llabel{tXGyOB 7p 2 Os4 3US bq5 ik 4Lin 76EQ130}     \end{align} Observe that this upper bound already appears in \eqref{EQ129}. For $T_3$, we simply use multiplicative Sobolev inequalities to write   \begin{align}      \begin{split}      T_3      &\les      \Vert         \partial_{t} (a^{3\alpha}{a^{\mu}}_\alpha)\partial_{\mu}q      \Vert_{H^{0.5+\delta_0}}      \les       \Vert \partial_{t}a\Vert_{H^{1.5-\nu/2}}      \Vert q\Vert_{H^{1.5+\nu/2+\delta_0}}      \end{split}    \label{EQ131}    \end{align} for an arbitrarily small parameter $\delta_0>0$. Therefore,   \begin{align}    \begin{split}     T_1+T_2+T_3     &\les      \Vert          \partial_{t}^2 v      \Vert_{L^2}      +         \Vert \partial_{t}v\Vert_{L^2}^{(1-\nu)/3}       \Vert \partial_{t}v\Vert_{H^{1.5}}^{(2+\nu)/3}       \Vert v\Vert_{H^{2.5-\nu/2}}      +         \Vert v\Vert_{H^{1.5}}^{(3-3\nu)/(2-\nu)}         \Vert v\Vert_{H^{2.5-\nu/2}}^{3/(2-\nu)}    \\&\indeq    +     \Vert v\Vert_{H^{2.5-\nu/2}}      \Vert q\Vert_{H^{1}}^{(2-2\nu-2\delta_0)/(3-\nu)}      \Vert q\Vert_{H^{2.5-\nu/2}}^{(1+\nu+2\delta_0)/(3-\nu)}    .    \end{split}    \label{ZZ61}   \end{align} Finally, we estimate the last two terms in \eqref{ZZ56}, representing an upper bound for  $ \Vert \partial_{t}q\Vert_{L^{2}(\Gamma_{1})}$. In this case, we have   \begin{align}    \begin{split}     \Vert \partial_{t}q\Vert_{L^{2}(\Gamma_{1})}     &\les     \Vert \partial_{t} a^{33}   q  \Vert_{H^{0.5+\delta_0}}     + \sum_{i=1}^2    \Vert \partial_{t}(\sqrt{g} g^{ij}  \partial_{j} \eta^{3})   \Vert_{H^{1.5}}     \\&     \les      \Vert \partial_{t} a   \Vert_{H^{1.5}}       \Vert q \Vert_{H^{1}}     + \Vert v\Vert_{H^{2.5}}    \les        \Vert v\Vert_{H^{3-\nu}}^{\nu/(1-\nu)}        \Vert v\Vert_{H^{2.5-\nu/2}}^{(1-2\nu)/(1-\nu)}      \colb      (     \Vert q\Vert_{H^{1}} + 1)    \end{split}    \label{ZZ66}   \end{align} where we used   \begin{equation}    \Vert v\Vert_{H^{2.5}}    \les    \Vert v\Vert_{H^{3-\nu}}^{\nu/(1-\nu)}    \Vert v\Vert_{H^{2.5-\nu/2}}^{(1-2\nu)/(1-\nu)}    .    \llabel{9O TkUxmp I8 u GYn fBK bYI 9A QzCFEQ95}   \end{equation} Combining \eqref{ZZ56}, \eqref{ZZ59} (cf.~the comment right after), \eqref{ZZ61}, and \eqref{ZZ66} then leads to \eqref{ZZ23}. \end{proof} \gsdfgdsfgdsfg \begin{Remark} \label{R01} {\rm Here we sketch an argument showing finiteness of the energy $E(0)$ under given conditions on the initial data. First, by  \eqref{ZZ13}, \eqref{ZZ16}, and \eqref{ZZ37},  we have   \begin{align}      \begin{split}         &\Delta q(0) = - \partial^{\alpha}v^{\lambda}\partial_{\lambda}v_{\alpha}(0)         \inon{in $\Omega$}         \\&         q(0) = - \Delta_2 \eta^{3}(0) = 0 \inon{on $\Gamma_{1}$}         \\& 	\partial_3 q = 0 \inon{on $\Gamma_{0}$}    \end{split}    \llabel{ w3h0 geJftZ ZK U 74r Yle ajm km ZEQ171}    \end{align} implying $q(0)\in H^{4-\nu}$ and thus, by \eqref{ZZ13}, $\partial_{t} v(0)\in H^{3-\nu}$. Now, based on \eqref{ZZ58}, evaluated at $t=0$, we have   \begin{align}   \begin{split}          \Delta \partial_{t} q(0)        &=              \partial_{t}^2a^{\lambda \alpha}(0) \partial_{\lambda}v_{\alpha}(0)         +  \partial_{t}a^{\lambda \alpha}(0) \partial_{t} \partial_{\lambda}v_{\alpha}(0)        -                               \partial_{t}a^{\lambda \alpha}(0)                                   \partial_{\alpha} \partial_{\lambda} q (0)       \\&\indeq       -                                  \partial_{\lambda}\partial_{t} {a^{\mu\lambda}}(0) 				 \partial_{\mu} q(0)       -                                  \partial_{t} {a^{\mu \lambda}}(0) 				 \partial_{\mu} \partial_{\lambda}q(0)     \in H^{2-\nu}    ,   \end{split}    \llabel{Jdi TGHO OaSt1N nl B 7Y7 h0y oWJ rEQ172}   \end{align} with the boundary conditions   \begin{equation}    \partial_{t} q(0)      =       \partial_{3} v_3(0) q(0)        - \Delta_{2} v^{3}(0)    \inon{on $\Gamma_1$}    \label{EQ173}   \end{equation} and   \begin{equation}     \partial_{3}\partial_{t} q(0)     = - \partial_{t} a^{\mu3}\partial_{\mu}q(0)     \inon{on $\Gamma_0$}     \llabel{y rVrT zHO8 2S7oub QA W x9d z2X YWEQ174}   \end{equation} which follow from \eqref{ZZ16} and \eqref{ZZ13} respectively. Note that  $\partial_{t} a(0) = -\nabla v(0)\in H^{2-\nu}$ and $\partial_{t}^2 a(0) = - \partial_{t} \nabla v(0) + \nabla v(0)\nabla v(0)\in H^{2-\nu}$, from where, using   \begin{equation}    \Delta_2 v^3(0) |_{\Gamma_1}    \in H^{1/2}(\Gamma_1),    \label{EQ175}   \end{equation} which in turn follows from $   v(0) |_{\Gamma_1}    \in H^{2.5}(\Gamma_1) $, we get $\partial_{t}q(0)\in H^{1}$, from where $\partial_{t}^2 v(0)\in L^2(\Omega)$. \gsdfgdsfgdsfg As pointed out above, the condition \eqref{EQ175} is not only sufficient, but also necessary for $\partial_{t}^2 v(0)\in L^{2}(\Omega)$. To show this, assume that $\partial_{t}^2 v(0)\in L^{2}(\Omega)$. Then $\partial_{t} q(0)\in H^{1}$  implying  $\partial_{t} q(0)|_{\Gamma_1}\in H^{1/2}(\Gamma_1)$. Using \eqref{EQ173}, we get that  \eqref{EQ175} holds. } \end{Remark} \gsdfgdsfgdsfg \gsdfgdsfgdsfg \startnewsection{A cofactor type cancellation}{sec05} In the energy estimate on $\partial_{t}^2v$, the highest order term is the one where all the derivatives fall on $a$. Thus we need to treat the term   \begin{gather}    T = \int_{0}^{t}\int_\Om q \DD a^{\mu\al} \DD \partial_\mu v_\alpha    ,    \label{EQ45}   \end{gather} where $    v=\partial_{t}\eta $. Here $\DD$ represents a differential operator, commuting with spatial and time derivatives. We shall use this with $\DD=\partial_{t}^2$. In this section, we rewrite \eqref{EQ45} using the cofactor form of $a$ and applying cross-integration by parts. \gsdfgdsfgdsfg First, note that we have  \begin{align}    a^{1\al} = \epsilon^{\al \lambda \tau} \partial_2 \eta_\lambda \partial_3 \eta_\tau, \,    a^{2\al} = -\epsilon^{\al \lambda \tau} \partial_1 \eta_\lambda \partial_3 \eta_\tau, \,     a^{3\al} = \epsilon^{\al \lambda \tau} \partial_1 \eta_\lambda \partial_2 \eta_\tau    \label{EQ46} \end{align} \gsdfgdsfgdsfg and thus, expanding in $\mu$ and using \eqref{EQ46},   \begin{align}    \begin{split}    T    &=      \int_{0}^{t}    \int_\Om q 
\epsilon^{\al \lambda \tau}\DD ( \partial_2 \eta_\lambda \partial_3 \eta_\tau) \DD \partial_1 v_\alpha     -     \int_{0}^{t}     \int_\Om q \epsilon^{\al \lambda \tau}\DD ( \partial_1 \eta_\lambda \partial_3 \eta_\tau) \DD \partial_2 v_\alpha     \\&\indeq     +     \int_{0}^{t}     \int_\Om q \epsilon^{\al \lambda \tau}\DD ( \partial_1 \eta_\lambda \partial_2 \eta_\tau) \DD \partial_3 v_\alpha    \end{split}    \llabel{B e5 Kf3A LsUF vqgtM2 O2 I dim rjZEQ58}   \end{align} from where   \begin{align}    \begin{split}    T  &=     \int_0^t \int_\Om q \ep^{\al\lambda \tau} \partial_{2}\DD\eta_\lambda \partial_3 \eta_\tau    \partial_1 \DD v_\al     +     \int_0^t \int_\Om q\ep^{\al\lambda \tau}\partial_2 \eta_\lambda  \partial_3 \DD \eta_\tau    \partial_1 \DD v_\al     \\&\indeq\indeq    -      \int_0^t \int_\Om q \ep^{\al\lambda \tau} \partial_1 \DD \eta_\lambda \partial_3 \eta_\tau    \partial_2 \DD v_\al     -     \int_0^t \int_\Om q \ep^{\al\lambda \tau}\partial_1 \eta_\lambda  \partial_3 \DD \eta_\tau    \partial_2 \DD v_\al     \\&\indeq\indeq    +    \int_0^t  \int_\Om q \ep^{\al\lambda \tau} \partial_1 \DD \eta_{\lambda} \partial_2 \eta_\tau    \partial_3 \DD v_\al     +    \int_0^t \int_\Om q \ep^{\al\lambda \tau}\partial_1 \eta_\lambda  \partial_2 \DD \eta_\tau    \partial_3 \DD v_\al     + L    \\&=     T_1 + \cdots + T_6 + L    \end{split}    \label{EQ02}   \end{align} where   \begin{align}    \begin{split}      L      &=     \int_{0}^{t}     \int_\Om q \epsilon^{\al \lambda \tau}     \Bigl(     \DD ( \partial_2 \eta_\lambda \partial_3 \eta_\tau)      -      \partial_2 \DD\eta_\lambda \partial_3 \eta_\tau     -      \partial_2 \eta_\lambda \partial_3 \DD\eta_\tau     \Bigr)     \DD \partial_1 v_\alpha     \\&\indeq     -     \int_{0}^{t}     \int_\Om q \epsilon^{\al \lambda \tau}     \Bigl(     \DD ( \partial_1 \eta_\lambda \partial_3 \eta_\tau)        -\partial_1 \DD\eta_\lambda \partial_3 \eta_\tau       -\partial_1 \eta_\lambda \partial_3 \DD\eta_\tau     \Bigr)     \DD \partial_2 v_\alpha     \\&\indeq     +     \int_{0}^{t}     \int_\Om q \epsilon^{\al \lambda \tau}     \Bigl(      \DD  (\partial_1 \eta_\lambda \partial_2 \eta_\tau)      -     \partial_1 \DD\eta_\lambda \partial_2 \eta_\tau     -     \partial_1 \eta_\lambda \partial_2 \DD\eta_\tau     \Bigr)     \DD \partial_3 v_\alpha    \end{split}    \label{EQ57}   \end{align} represents the sum of  the lower order terms that appear when we distribute $\DD$ on the product  $\epsilon^{\alpha\lambda\tau}\partial_{\alpha}\eta_{\lambda}\partial_{\beta}\eta_{\tau}$ and all derivatives do not fall on a single $\eta$. \gsdfgdsfgdsfg In order to proceed, we need for  $\DD$ to  contain at least one time derivative. Thus we now restrict our attention to   \begin{equation}    \DD     = \EE \partial_{t}    \label{EQ47}   \end{equation} where $\EE$ is a linear differential operator, for which we assume   \begin{equation}    [\EE,\partial_{t}]=0    \label{EQ48}   \end{equation} and   \begin{equation}    [\EE,\partial_{\alpha}]=0       \comma \alpha=1,2,3    .    \label{EQ56}   \end{equation} Further below we apply the resulting identity to  $\EE=\partial_{t}$. \gsdfgdsfgdsfg \gsdfgdsfgdsfg \gsdfgdsfgdsfg \gsdfgdsfgdsfg We group the leading terms in \eqref{EQ02} as    \begin{align}    \begin{split}    T_1 + T_3 &=     \int_0^t \int_\Om q \ep^{\al\lambda \tau} \partial_3 \eta_\tau\partial_{2}\DD\eta_\lambda     \partial_1 \DD v_\al     -      \int_0^t \int_\Om q \ep^{\al\lambda \tau} \partial_3 \eta_\tau\partial_1 \DD \eta_\lambda     \partial_2 \DD v_\al     \\    T_2 + T_5    &=    \int_0^t \int_\Om q\ep^{\al\lambda \tau}\partial_2 \eta_\lambda  \partial_3 \DD \eta_\tau    \partial_1 \DD v_\al     +    \int_0^t  \int_\Om q \ep^{\al\lambda \tau} \partial_2 \eta_\tau\partial_1 \DD \eta_{\lambda}     \partial_3 \DD v_\al     \\    T_4 + T_6    &=    -     \int_0^t \int_\Om q \ep^{\al\lambda \tau}\partial_1 \eta_\lambda  \partial_3 \DD \eta_\tau    \partial_2 \DD v_\al     +    \int_0^t \int_\Om q \ep^{\al\lambda \tau}\partial_1 \eta_\lambda  \partial_2 \DD \eta_\tau    \partial_3 \DD v_\al     .    \end{split}    \llabel{ 7RN 28 4KGY trVa WW4nTZ XV b RVo EQ115}   \end{align} Here we present the treatment of the sum $T_2+T_5$; the two other pairs are treated similarly (see below). Thus consider   \begin{align}    \begin{split}     T_2+T_5    &=    \int_{0}^{t}\int_{\Omega}     q \epsilon^{\alpha\lambda\tau}     \partial_{2}\eta_{\lambda}     \partial_{3}\EE v_{\tau}     \partial_{1}\EE\partial_{t}v_{\alpha}     +    \int_{0}^{t}\int_{\Omega}     q \epsilon^{\alpha\lambda\tau}     \partial_{2}\eta_{\tau}     \partial_{1}\EE v_{\lambda}     \partial_{3}\EE\partial_{t}v_{\alpha}    \\&    =    \int_{\Omega}     q \epsilon^{\alpha\lambda\tau}     \partial_{2}\eta_{\lambda}     \partial_{3}\EE v_{\tau}     \partial_{1}\EE v_{\alpha} \bigm|_{0}^{t}    -    \int_{0}^{t}\int_{\Omega}     q \epsilon^{\alpha\lambda\tau}     \partial_{2}\eta_{\lambda}     \partial_{t}\partial_{3}\EE v_{\tau}     \partial_{1}\EE v_{\alpha}     \\&\indeq     -    \int_{0}^{t}\int_{\Omega}     \partial_{t}(         q \epsilon^{\alpha\lambda\tau}          \partial_{2}\eta_{\lambda}            )     \partial_{3}\EE v_{\tau}     \partial_{1}\EE v_{\alpha}     +    \int_{0}^{t}\int_{\Omega}     q \epsilon^{\alpha\lambda\tau}     \partial_{2}\eta_{\tau}     \partial_{1}\EE v_{\lambda}     \partial_{3}\EE \partial_{t}v_{\alpha}     \\&     =     I_1     + I_2     + I_3     + I_4    ,    \end{split}    \label{EQ49}   \end{align} where we integrated by parts in $t$ in the first integral. By relabeling the indices, we may rewrite the fourth integral as $     I_4     =    \int_{0}^{t}\int_{\Omega}     q \epsilon^{\tau\alpha\lambda}     \partial_{2}\eta_{\lambda}     \partial_{1}\EE v_{\alpha}     \partial_{3}\EE\partial_{t}v_{\tau}     =    \int_{0}^{t}\int_{\Omega}     q \epsilon^{\alpha\lambda\tau}     \partial_{2}\eta_{\lambda}     \partial_{1}\EE v_{\alpha}     \partial_{3}\EE\partial_{t}v_{\tau} $ and  the last expression cancels with~$I_2$. Next we treat the first term on the far side of \eqref{EQ49} evaluated at $t$ by writing   \begin{align}    \begin{split}    I_1|_{t}    &=     \int_{\Omega}     q \epsilon^{\alpha\lambda\tau}     \partial_{2}\eta_{\lambda}     \partial_{3}\EE v_{\tau}     \partial_{1}\EE v_{\alpha}     =     \int_{\Omega}     q \epsilon^{\alpha2\tau}     \partial_{3}\EE v_{\tau}     \partial_{1}\EE v_{\alpha}     +     \int_{\Omega}     q \epsilon^{\alpha\lambda\tau}     \left(      \int_{0}^{t}        \partial_{2}v_{\lambda}     \right)     \partial_{3}\EE v_{\tau}     \partial_{1}\EE v_{\alpha}    ,    \end{split}    \llabel{Q77 hVL X6 K2kq FWFm aZnsF9 Ch p 8EQ52}   \end{align} where we used $\partial_{2}\eta_{\lambda}=\delta_{2\lambda}+\int_{0}^{t}\partial_{2} v_{\lambda}$ in the last step. \gsdfgdsfgdsfg \gsdfgdsfgdsfg \gsdfgdsfgdsfg \gsdfgdsfgdsfg Note that $T_1+T_3$ is obtained from $T_2+T_5$ by switching $x_2$ and $x_3$ and multiplying by $-1$, while $T_4+T_6$ is  obtained from $T_2+T_5$ by switching $x_1$ and $x_2$ and also multiplying by $-1$. \gsdfgdsfgdsfg We summarize the above derivation in the following statement. \gsdfgdsfgdsfg \cole \begin{Lemma} \label{L12} Consider the integral $       T = \int_\Om q \EE\partial_{t} a^{\mu\al} \EE\partial_{t} \partial_\mu v_\al    $, where $\EE$ is a differential operator which commutes with $\partial_{t}$  and $\partial_\alpha$, i.e., \eqref{EQ48} and \eqref{EQ56} hold. Then we have   \begin{align}    \begin{split}    T    &=     \int_{\Omega}     q \epsilon^{\alpha2\tau}     \partial_{3}\EE v_{\tau}     \partial_{1}\EE v_{\alpha}     \bigm|_{t}     -     \int_{\Omega}     q \epsilon^{\alpha3\tau}     \partial_{2}\EE v_{\tau}     \partial_{1}\EE v_{\alpha}     \bigm|_{t}     -     \int_{\Omega}     q \epsilon^{\alpha1\tau}     \partial_{3}\EE v_{\tau}     \partial_{2}\EE v_{\alpha}     \bigm|_{t}    \\&\indeq     -    \int_{0}^{t}\int_{\Omega}     \partial_{t}(         q \epsilon^{\alpha\lambda\tau}          \partial_{2}\eta_{\lambda}            )     \partial_{3}\EE v_{\tau}     \partial_{1}\EE v_{\alpha}    +    \int_{0}^{t}\int_{\Omega}     \partial_{t}(         q \epsilon^{\alpha\lambda\tau}          \partial_{3}\eta_{\lambda}            )     \partial_{2}\EE v_{\tau}     \partial_{1}\EE v_{\alpha}    \\&\indeq    +    \int_{0}^{t}\int_{\Omega}     \partial_{t}(         q \epsilon^{\alpha\lambda\tau}          \partial_{1}\eta_{\lambda}            )     \partial_{3}\EE v_{\tau}     \partial_{2}\EE v_{\alpha}    \\&\indeq    +     \int_{\Omega}     q \epsilon^{\alpha\lambda\tau}     \left(      \int_{0}^{t}     \partial_{2}v_{\lambda}     \right)     \partial_{3}\EE v_{\tau}     \partial_{1}\EE v_{\alpha}    -     \int_{\Omega}     q \epsilon^{\alpha\lambda\tau}     \left(      \int_{0}^{t}     \partial_{3}v_{\lambda}     \right)     \partial_{2}\EE v_{\tau}     \partial_{1}\EE v_{\alpha}    \\&\indeq    -     \int_{\Omega}     q \epsilon^{\alpha\lambda\tau}     \left(      \int_{0}^{t}     \partial_{1}v_{\lambda}     \right)     \partial_{3}\EE v_{\tau}     \partial_{2}\EE v_{\alpha}    \\&\indeq    -    \int_{\Omega}     q \epsilon^{\alpha2\tau}     \partial_{3}\EE v_{\tau}     \partial_{1}\EE v_{\alpha}      \bigm|_{0}^{}    +    \int_{\Omega}     q \epsilon^{\alpha3\tau}     \partial_{2}\EE v_{\tau}     \partial_{1}\EE v_{\alpha}      \bigm|_{0}^{}     +    \int_{\Omega}     q \epsilon^{\alpha1\tau}     \partial_{3}\EE v_{\tau}     \partial_{2}\EE v_{\alpha}      \bigm|_{0}^{}    + L    ,    \end{split}    \label{EQ55}   \end{align} where $L$ is given in \eqref{EQ57}. \end{Lemma}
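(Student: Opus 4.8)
The identity has essentially been assembled in the computation preceding the statement; the plan is to organize that computation into a clean proof. First I would substitute the cofactor representation \eqref{EQ46} of $a^{\mu\al}$ into $T$ and expand the sum over $\mu = 1,2,3$, so that $T$ becomes a sum of three integrals of the form $\int_0^t\int_\Om q\,\epsilon^{\al\lambda\tau}\DD(\partial_i\eta_\lambda \partial_j\eta_\tau)\,\DD\partial_k v_\al$ with $\DD = \EE\partial_t$. Applying the Leibniz rule to each product $\DD(\partial_i\eta_\lambda\partial_j\eta_\tau)$ and separating off the two terms in which the whole of $\DD$ lands on a single factor, I would write $T = T_1 + \cdots + T_6 + L$ as in \eqref{EQ02}, where $L$ collects precisely the lower-order contributions recorded in \eqref{EQ57}.

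Next, using $\partial_t\eta = v$ together with the commutation hypotheses \eqref{EQ48} and \eqref{EQ56} (so that $\EE$ may be moved freely past $\partial_t$ and $\partial_\alpha$), I would write the leading factors as $\DD\eta = \EE v$ and $\DD\partial_t v = \EE\partial_t^2 v$, and group the six leading terms into the pairs $T_1+T_3$, $T_2+T_5$, $T_4+T_6$. I would carry out $T_2+T_5$ in full detail, as in \eqref{EQ49}: integrating by parts in $t$ in the first integral produces a time-endpoint term $I_1\bigm|_0^t$, a term $I_2$ in which $\partial_t$ hits $\partial_3\EE v_\tau$, a term $I_3$ in which $\partial_t$ hits the coefficient $q\,\epsilon^{\al\lambda\tau}\partial_2\eta_\lambda$, and the untouched integral $I_4$ coming from $T_5$. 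The crucial algebraic point --- the cofactor cancellation of the section title --- is that after relabelling summation indices and using the antisymmetry of $\epsilon^{\al\lambda\tau}$ one gets $I_4 = -I_2$, so these two cancel. For the surviving endpoint term $I_1\bigm|_t$ I would insert $\partial_2\eta_\lambda = \delta_{2\lambda} + \int_0^t\partial_2 v_\lambda$, splitting it into the bilinear piece $\int_\Om q\,\epsilon^{\al 2\tau}\partial_3\EE v_\tau\,\partial_1\EE v_\al\bigm|_t$ and a cubic remainder carrying $\int_0^t\partial_2 v_\lambda$.

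Finally, I would derive the analogues for $T_1+T_3$ and $T_4+T_6$ by the stated symmetry: $T_1+T_3$ is $T_2+T_5$ with $x_2$ and $x_3$ interchanged and an overall minus sign, and $T_4+T_6$ is $T_2+T_5$ with $x_1$ and $x_2$ interchanged and an overall minus sign. Collecting the three bilinear endpoint terms at $t$, the three at $0$, the three integrals in which $\partial_t$ falls on $q\,\epsilon^{\al\lambda\tau}\partial_j\eta_\lambda$, the three cubic remainders with $\int_0^t\partial_j v_\lambda$, and the leftover $L$, yields exactly \eqref{EQ55}. The step needing the most care is the bookkeeping: checking the sign in each of the three symmetry reductions and confirming that the $\epsilon$-antisymmetry cancellation $I_4 = -I_2$ goes through identically in all three pairs, since a sign slip there would corrupt the leading structure used in the $\partial_t^2 v$ energy estimate of \eqref{EQ45}. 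No integration by parts in the spatial variables is needed, and there are no $\partial\Omega$ boundary terms to track.
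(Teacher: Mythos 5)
Your proposal reproduces the paper's argument step for step: the cofactor expansion \eqref{EQ46} with the Leibniz splitting into $T_1+\cdots+T_6+L$ as in \eqref{EQ02}, the grouping into the pairs $T_1+T_3$, $T_2+T_5$, $T_4+T_6$, the integration by parts in $t$ in \eqref{EQ49} yielding the cancellation $I_4=-I_2$, the substitution $\partial_2\eta_\lambda=\delta_{2\lambda}+\int_0^t\partial_2 v_\lambda$ in the surviving endpoint term, and the symmetry reductions for the two other pairs. One small precision worth noting: the relabeling that produces $I_4=-I_2$ rests on the cyclic invariance $\epsilon^{\tau\alpha\lambda}=\epsilon^{\alpha\lambda\tau}$ (an even permutation, so no sign is picked up there --- the minus sign is already built into $I_2$), rather than an application of ``antisymmetry'' in the sense of a transposition, which would introduce a spurious sign.
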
 \colb \gsdfgdsfgdsfg \gsdfgdsfgdsfg It is helpful to expand the commutator term $L$ using \eqref{EQ47}. We thus have   \begin{align}    \begin{split}      L      &=     \int_\Om q \epsilon^{\al \lambda \tau}     \Bigl(     \EE  ( \partial_2 v_\lambda \partial_3 \eta_\tau)      -      \partial_2 \EE v_\lambda \partial_3 \eta_\tau     \Bigr)     \EE\partial_{t} \partial_1 v_\alpha     +     \int_\Om q \epsilon^{\al \lambda \tau}     \Bigl(     \EE ( \partial_2 \eta_\lambda \partial_3 v_\tau)      -      \partial_2 \eta_\lambda \partial_3 \EE v_\tau     \Bigr)     \EE\partial_{t} \partial_1 v_\alpha    \\&\indeq     -     \int_\Om q \epsilon^{\al \lambda \tau}     \Bigl(     \EE ( \partial_1 v_\lambda \partial_3 \eta_\tau)      -\partial_1 \EE v_\lambda \partial_3 \eta_\tau     \Bigr)     \EE\partial_{t} \partial_2 v_\alpha     -     \int_\Om q \epsilon^{\al \lambda \tau}     \Bigl(     \EE ( \partial_1 \eta_\lambda \partial_3 v_\tau)      -\partial_1 \eta_\lambda \partial_3 \EE v_\tau     \Bigr)     \EE\partial_{t} \partial_2 v_\alpha     \\&\indeq     +     \int_\Om q \epsilon^{\al \lambda \tau}     \Bigl(      \EE (\partial_1 v_\lambda \partial_2 \eta_\tau)      -     \partial_1 \EE v_\lambda \partial_2 \eta_\tau)      \Bigr)     \EE\partial_{t}\partial_3 v_\alpha     +     \int_\Om q \epsilon^{\al \lambda \tau}     \Bigl(       \EE (\partial_1 \eta_\lambda \partial_2 v_\tau)      -     \partial_1 \eta_\lambda \partial_2 \EE v_\tau     \Bigr)     \EE\partial_{t}\partial_3 v_\alpha     .    \end{split}    \llabel{Kx rsc SGP iS tVXB J3xZ cD5IP4 Fu EQ90}   \end{align} \gsdfgdsfgdsfg \startnewsection{A boundary integral estimate}{sec06} In Sections~\ref{sec07} and~\ref{sec08}, we obtain two integrals of the form $     \KK    =    - \int_{\Gamma_1}          \EE \partial_{t}( a^{\mu\alpha}q) \EE \partial_{t} v_{\alpha}N_{\mu}    =     \int_{\Gamma_1}          \EE \partial_{t}(\sqrtg \Delta_g\eta^{\alpha}) \EE \partial_{t} v_{\alpha} $ ($I_4$ and $J_4$ in \eqref{EQ91} and \eqref{EQ99} below, respectively), where $\EE$ is as in the previous section, i.e., a differential operator which commutes with spatial and time derivatives. Using the identity   \begin{align}    \begin{split}     {\partial}_t(\sqrt{g} \Delta_g \eta^\al )       &=       \partial_i        \Bigl(         \sqrt{g} g^{ij} (\de^\al_\lambda -g^{kl} \partial_k \eta^\al \partial_l \eta_\lambda)          {\partial}_t \partial_j \eta^\lambda          + \sqrt{g}(g^{ij} g^{kl} - g^{lj}g^{ik} ) \partial_j \eta^\al \partial_k\eta_\lambda         \partial_{t} \partial_l \eta^\lambda        \Bigr)    \end{split}    \llabel{9 Lcd TR2 Vwb cL DlGK 1ro3 EEyqEA EQ60}   \end{align} from \cite{DisconziKukavicaIncompressible} and $v=\partial_{t}\eta$, we get   \begin{align}    \begin{split}    \KK    &=    \int_{\Gamma_1}     \EE \partial_{t}v_{\alpha}     \EE        \partial_{i}        \Bigl(         \sqrt{g} g^{ij} (\de^\al_\lambda -g^{kl} \partial_k \eta^\al \partial_l \eta_\lambda)          \partial_j v^\lambda          + \sqrt{g}(g^{ij} g^{kl} - g^{lj}g^{ik} ) \partial_j \eta^\al \partial_k\eta_\lambda         \partial_l v^\lambda         \Bigr)     \\&     =     -    \int_{\Gamma_1}      \partial_{i} \EE \partial_{t}v_{\alpha}     \EE       \Bigl(         \sqrt{g} g^{ij} (\de^\al_\lambda -g^{kl} \partial_k \eta^\al \partial_l \eta_\lambda)           \partial_j v^\lambda         \Bigr)        \\&\indeq       -       \int_{\Gamma_1}         \partial_{i}   \EE \partial_{t}v_{\alpha}        \EE        \Bigl(         \sqrt{g}(g^{ij} g^{kl} - g^{lj}g^{ik} ) \partial_j \eta^\al \partial_k\eta_\lambda         \partial_l v^\lambda         \Bigr)     =     \KK_{1}     +    \KK_{2}    .   \end{split}    \label{EQ62}   \end{align} We denote by $\Pi$ the projection onto the normal of the moving boundary, given explicitly by   \begin{equation}    \Pi_{\lambda}^{\alpha}    =    \delta_{\lambda}^{\alpha}    - g^{kl}      \partial_{k}\eta^{\alpha}      \partial_{l}\eta_{\lambda}    .    \label{EQ63}   \end{equation} In Section~\ref{sec10}, we show how estimates on $\Pi v$ (and its time derivatives) yield estimates on the normal component of $v$ (and its time derivatives).    Using $\Pi$, we thus have   \begin{align}    \begin{split}    \KK_{1}    &=     -    \int_{\Gamma_1}     \EE       \Bigl(         \sqrt{g} g^{ij} \Pi_{\lambda}^{\alpha}           \partial_j v^\lambda         \Bigr)      \partial_{i} \EE \partial_{t}v_{\alpha}    \\&    =    -    \int_{\Gamma_1}         \sqrt{g} g^{ij} \Pi_{\lambda}^{\alpha}         \EE      \partial_j v^\lambda        \partial_{i} \EE \partial_{t}v_{\alpha}    -    \int_{\Gamma_1}     \Bigl(      \EE         (                \sqrt{g} g^{ij} \Pi_{\lambda}^{\alpha} \partial_j v^\lambda          )       -         \sqrt{g} g^{ij} \Pi_{\lambda}^{\alpha} \EE\partial_j  v^\lambda         \Bigr)      \partial_{i} \EE \partial_{t}v_{\alpha}    =    \KK_{11}    +   \KK_{12}    .    \end{split}    \llabel{zw 6 sKe Eg2 sFf jz MtrZ 9kbd xNw6EQ64}   \end{align} By $\Pi_{\lambda}^{\alpha}=\Pi_{\mu}^{\alpha}\Pi_{\lambda}^{\mu}$ (cf.~\cite{DisconziKukavicaIncompressible}), we may rewrite the first term as   \begin{align}    \begin{split}    \KK_{11}    &=    -    \int_{\Gamma_1}         \sqrt{g} g^{ij}           \Pi_{\lambda}^{\mu}               \partial_j \EE v^\lambda           \Pi_{\mu}^{\alpha}      \partial_{i} \EE \partial_{t}v_{\alpha}    \\&    =    -    \frac12    \frac{d}{dt}    \int_{\Gamma_1}         \sqrt{g} g^{ij}           \Pi_{\lambda}^{\mu}               \partial_j \EE v^\lambda           \Pi_{\mu}^{\alpha}      \partial_{i} \EE v_{\alpha}    +    \frac12    \int_{\Gamma_1}      \partial_{t}      (         \sqrt{g} g^{ij}           \Pi_{\lambda}^{\alpha}      )         \partial_{j}\EE v^\lambda            \partial_{i} \EE v_{\alpha}     \\&    =    -    \frac12    \frac{d}{dt}    \int_{\Gamma_1}         \sqrt{g} g^{ij}           \Pi_{\lambda}^{\mu}               \partial_j \EE v^\lambda           \Pi_{\mu}^{\alpha}      \partial_{i} \EE v_{\alpha}    +    \frac12    \int_{\Gamma_1}      \partial_{t}      (         \sqrt{g} g^{ij}       )      \Pi_{\lambda}^{\alpha}         \partial_{j}\EE v^\lambda           \partial_{i} \EE v_{\alpha}    \\&\indeq       +    \frac12    \int_{\Gamma_1}         \sqrt{g} g^{ij}       \partial_{t}      (          \Pi_{\lambda}^{\alpha}      )               \partial_j \EE v^\lambda           \partial_{i} \EE v_{\alpha}    .    \end{split}    \llabel{6c xf t lzD GZh xQA WQ KkSX jqmm rEQ65}   \end{align} We thus obtain   \begin{align}    \begin{split}    \KK_{11}     &\les        -    \frac12    \frac{d}{dt}    \int_{\Gamma_1}         \sqrt{g} g^{ij}           \Pi_{\lambda}^{\mu}               \partial_j \EE v^\lambda           \Pi_{\mu}^{\alpha}     \partial_{i} \EE v_{\alpha}     +     P(       \Vert\eta\Vert_{H^{2.5+\delta_0}}      )       \Vert v\Vert_{H^{2.5+\delta_0}}       \Vert \Pi \barpar\EE v \Vert_{L^2(\Gamma_1)}^2     \\&\indeq     +     P(       \Vert\eta\Vert_{H^{2.5+\delta_0}}      )      \Vert v\Vert_{H^{2.5+\delta_0}}      \Vert \EE v\Vert_{H^{1.5}}^2    ,    \end{split}    \llabel{EpNuG 6P y loq 8hH lSf Ma LXm5 RzEEQ66}   \end{align} where   \begin{equation}    \barpar=\nabla_2=(\partial_{1},\partial_{2})       .    \label{EQ37}   \end{equation} Next, we consider the second term in \eqref{EQ62}. We have   \begin{align}    \begin{split}    \KK_{2}    &=      -       \int_{\Gamma_1}         \sqrt{g}(g^{ij} g^{kl} - g^{lj}g^{ik} ) \partial_j \eta^\al \partial_k\eta_\lambda           \partial_{l}\EE v^{\lambda}         \partial_{i}\EE\partial_{t} v_{\alpha}     \\&\indeq      -       \int_{\Gamma_1}        \biggl(         \EE         \bigl(          \sqrt{g}(g^{ij} g^{kl} - g^{lj}g^{ik} ) \partial_j \eta^\al \partial_k\eta_\lambda  \partial_l v^\lambda          \bigr)         \\&\indeq\indeq\indeq\indeq         -          \sqrt{g}(g^{ij} g^{kl} - g^{lj}g^{ik} ) \partial_j \eta^\al \partial_k\eta_\lambda          \partial_{l}\EE v^{\lambda}        \biggr)         \partial_{i}\EE\partial_{t} v_{\alpha}     =     \KK_{21}     +    \KK_{22}    .    \end{split}    \llabel{X W4Y1Bq ib 3 UOh Yw9 5h6 f6 o8kw EQ67}   \end{align} As in \cite{CoutandShkollerFreeBoundary} (cf.~also~\cite{DisconziKukavicaIncompressible}),  we may write $     \KK_{21}     = -      \int_{\Gamma_{1}}         \sqrtg^{-1}           \bigl(                  \partial_{t} \det A^{1} + \det A^{2}  + \det A^{3}           \bigr) $, where   \begin{align*}     A^{1} =      \begin{pmatrix}     \partial_{1} \eta_{\mu} \partial_{1} \EE v^{\mu} & \partial_{1} \eta_{\mu} \partial_{2}      \EE v^{\mu}      \\     \partial_{2} \eta_{\mu} \partial_{1} \EE v^{\mu} & \partial_{2} \eta_{\mu}\partial_{2} \EE v^{\mu}     \end{pmatrix},      A^{2} =      \begin{pmatrix}     \partial_{1} v_{\mu}\partial_{1} \EE v^{\mu} & \partial_{1} \eta_{\mu} \partial_{2} \EE v^{\mu} \\     \partial_{2} v_{\mu} \partial_{1} \EE v^{\mu} & \partial_{2} \eta_{\mu} \partial_{2} \EE v^{\mu}     \end{pmatrix},      A^{3} =      \begin{pmatrix}     \partial_{1} \eta_{\mu} \partial_{1} \EE v^{\mu} & \partial_{1}  v_{\mu}  \partial_{2} \EE v^{\mu} \\     \partial_{2} \eta_{\mu} \partial_{1} \EE v^{\mu} & \partial_{2} v_{\mu} \partial_{2} \EE v^{\mu}     \end{pmatrix}     .   \end{align*} Therefore,   \begin{align}   \begin{split}    \KK_{21}    &=           -      \int_{\Gamma_{1}}        
\partial_{t}       \left(        \frac{1}{\sqrt{g}} \det A^{1}       \right)       +      \int_{\Gamma_{1}}        \partial_{t}             \left(        \frac{1}{\sqrt{g}}       \right)       \det A^{1}       -  \int_{\Gamma_{1}} \frac{1}{\sqrt{g}} \det A^{2}       -        \int_{\Gamma_{1}}          \frac{1}{\sqrt{g}} \det A^{3}    \\&    = \KK_{211}       + \KK_{212}      + \KK_{213}      + \KK_{214}              .   \end{split}    \llabel{6frZ wg6fIy XP n ae1 TQJ Mt2 TT fWEQ71}   \end{align} Note that $    \left\Vert     \partial_{t}     \left(     \sqrtg^{-1}     \right)    \right\Vert_{L^\infty(\Gamma_1)}    \les    P(\Vert \eta\Vert_{H^{2.5+\delta_0}})    \Vert v\Vert_{H^{2.5+\delta_0}} $ Since also $|\det A^{1}|\les |\barpar \eta|^2  (\EE\barpar v)^2$, we get   \begin{equation}     \KK_{212}    \les     P(\Vert \eta\Vert_{H^{2.5+\delta_0}})          \Vert v\Vert_{H^{2.5+\delta_0}}     \Vert \EE \barpar v\Vert_{L^2(\Gamma_1)}^2     \les     P(\Vert \eta\Vert_{H^{2.5+\delta_0}})          \Vert v\Vert_{H^{2.5+\delta_0}}     \Vert \EE v\Vert_{H^{1.5}}^2    .    \llabel{Wf jJrX ilpYGr Ul Q 4uM 7Ds p0r VgEQ73}   \end{equation} Similarly,   \begin{equation}    \left|     \int_{\Gamma_1}     \frac{1}{\sqrt{g}}       (\det A^{2}+\det A^{3})    \right|    \les     P(\Vert \eta\Vert_{H^{2.5+\delta_0}})          \Vert \EE \barpar v\Vert_{L^2(\Gamma_1)}^2     \les     P(\Vert \eta\Vert_{H^{2.5+\delta_0}})          \Vert \EE v\Vert_{H^{1.5}}^2    .    \llabel{ 3gIE mQOz TFh9LA KO 8 csQ u6m h25EQ74}   \end{equation} The term $\KK_{211}$ requires more care since if  we bound $\det A^{1}$ as above, we obtain the term $\Vert\EE\barpar v\Vert_{L^2(\Gamma_1)}^2$ which  cannot be absorbed into the left side. Instead we integrate by parts and obtain   \begin{align}    \begin{split}    \int_{\Gamma_1}    \frac{1}{\sqrtg}    \det A^{1}    &=    \int_{\Gamma_1}    \frac{1}{\sqrtg}    \bigl(     \partial_{1}\eta_{\mu}\partial_{2}\eta_{\lambda}     \partial_{1}\EE v^{\mu}     \partial_{2}\EE v^{\lambda}     -     \partial_{1}\eta_{\mu}\partial_{2}\eta_{\lambda}     \partial_{2} \EE v^{\mu}     \partial_{1} \EE v^{\lambda}    \bigr)    \\&    =    \int_{\Gamma_1}    \frac{1}{\sqrtg}    \bigl(     -     \partial_{1}\eta_{\mu}\partial_{2}\eta_{\lambda}     \EE v^{\mu}     \partial_{1}\partial_{2}\EE v^{\lambda}     +     \partial_{1}\eta_{\mu}\partial_{2}\eta_{\lambda}     \EE v^{\mu}     \partial_{2}\partial_{1} \EE v^{\lambda}    \bigr)    \\&\indeq    -    \int_{\Gamma_1}      Q_{\mu\lambda}^{i}(\bar\partial\eta,\bar\partial^2\eta)      \EE v^{\mu}      \partial_{i} \EE v^{\lambda}    \\&    =    -    \int_{\Gamma_1}      Q_{\mu\lambda}^{i}(\bar\partial\eta,\bar\partial^2\eta)      \EE v^{\mu}      \partial_{i} \EE v^{\lambda}     ,    \end{split}    \llabel{ r8 WqRI DZWg SYkWDu lL 8 Gpt ZW1 EQ75}   \end{align} where $Q_{\mu\lambda}^{i}(\bar\partial\eta,\bar\partial^2\eta)$ is a rational function, which is \emph{linear} in $\bar\partial^2\eta$ and can thus be written as $    Q_{\mu\lambda}^{i}(\bar\partial\eta,\bar\partial^2\eta)    =    \tilde Q_{\mu\lambda}^{i}(\bar\partial\eta)\bar\partial^2\eta $ with $\tilde Q$ a rational function. Hence, $    \KK_{211}    =     (   \fractext{d}{dt})    \int_{\Gamma_1}    \tilde Q_{\mu\lambda}^{i}(\bar\partial\eta)\bar\partial^2\eta      \EE v^{\mu}      \partial_{i} \EE v^{\lambda} $, and thus   \begin{equation}    \KK_{21}    \les    \frac{d}{dt}    \int_{\Gamma_1}    \tilde Q_{\mu\lambda}^{i}(\bar\partial\eta)\bar\partial^2\eta      \EE v^{\mu}      \partial_{i} \EE v^{\lambda}     +     P(\Vert \eta\Vert_{H^{2.5+\delta_0}})          (\Vert v\Vert_{H^{2.5+\delta_0}}+1)     \Vert \EE v\Vert_{H^{1.5}}^2    .    \llabel{0Gd SY FUXL zyQZ hVZMn9 am P 9aE WEQ78}   \end{equation} \gsdfgdsfgdsfg We summarize the above derivations in the following statement. \gsdfgdsfgdsfg \gsdfgdsfgdsfg \gsdfgdsfgdsfg \cole \begin{Lemma} \label{L14} Consider the integral $       \KK       =    - \int_{\Gamma_1}          \EE \partial_{t}( a^{\mu\alpha}q) \EE \partial_{t} v_{\alpha}N_{\mu} $, where $\EE$ is a differential operator which commutes with $\partial_{t}$  and $\partial_\alpha$, i.e., \eqref{EQ48} and \eqref{EQ56} hold. Then we have   \begin{align}    \begin{split} &      - \int_{\Gamma_1}          \EE \partial_{t}( a^{\mu\alpha}q) \EE \partial_{t} v_{\alpha}N_{\mu}   \\&\indeq      \les         -    \frac12    \frac{d}{dt}    \int_{\Gamma_1}         \sqrt{g} g^{ij}           \Pi_{\lambda}^{\mu}         \EE      \partial_j v^\lambda           \Pi_{\mu}^{\alpha}      \EE\partial_{i} v_{\alpha}    +    \frac{d}{dt}    \int_{\Gamma_1}    \tilde Q_{\mu\lambda}^{i}(\bar\partial\eta)\bar\partial^2\eta      \EE v^{\mu}      \partial_{i} \EE v^{\lambda}    \\&\indeq\indeq    -    \int_{\Gamma_1}     \Bigl(      \EE         (                \sqrt{g} g^{ij} \Pi_{\lambda}^{\alpha} \partial_j v^\lambda          )       -         \sqrt{g} g^{ij} \Pi_{\lambda}^{\alpha} \EE\partial_j  v^\lambda         \Bigr)      \partial_{i} \EE \partial_{t}v_{\alpha}    \\&\indeq\indeq      -       \int_{\Gamma_1}        \biggl(         \EE         \bigl(          \sqrt{g}(g^{ij} g^{kl} - g^{lj}g^{ik} )          \partial_{j}\eta^{\alpha}                  \partial_{k}\eta_{\lambda}          \partial_{l} v^{\lambda}         \bigr)         \\&\indeq\indeq\indeq\indeq         -          \sqrt{g}(g^{ij} g^{kl} - g^{lj}g^{ik} )          \partial_{j}\eta^{\alpha}                  \partial_{k}\eta_{\lambda}          \partial_{l}\EE v^{\lambda}        \biggr)         \partial_{i}\EE\partial_{t} v_{\alpha}    \\&\indeq\indeq     +     P(       \Vert\eta\Vert_{H^{2.5+\delta_0}}      )      (     \Vert v\Vert_{H^{2.5+\delta_0}}+1)      \Vert \EE v\Vert_{H^{1.5}}^2    .    \end{split}    \label{EQ77}   \end{align} \end{Lemma}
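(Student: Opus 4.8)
The plan is to establish \eqref{EQ77} by a chain of integrations by parts designed to peel off total time derivatives of quadratic boundary functionals, in the spirit of the lower-order computations in \cite{CoutandShkollerFreeBoundary, DisconziKukavicaIncompressible}. The starting point is the boundary condition \eqref{ZZ16}: with $\sigma=1$ and $N=(0,0,1)$ on $\Gamma_{1}$ (where $|a^{T}N|=\sqrtg$), it gives $a^{\mu\alpha}N_{\mu}q=-\sqrtg\,\Delta_{g}\eta^{\alpha}$, so that $\KK=\int_{\Gamma_{1}}\EE\partial_{t}(\sqrtg\,\Delta_{g}\eta^{\alpha})\,\EE\partial_{t}v_{\alpha}$. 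Inserting the formula for $\partial_{t}(\sqrtg\Delta_{g}\eta^{\alpha})$ quoted above from \cite{DisconziKukavicaIncompressible} (together with $v=\partial_{t}\eta$) displays a tangential derivative $\partial_{i}$ sitting on a parenthesized expression; the first move is to integrate by parts in $\partial_{i}$ on the closed surface $\Gamma_{1}$, transferring it onto $\EE\partial_{t}v_{\alpha}$ and splitting $\KK=\KK_{1}+\KK_{2}$ as in \eqref{EQ62}, according to the two structurally different coefficient blocks $\sqrtg g^{ij}\Pi^{\alpha}_{\lambda}$ (with $\Pi$ from \eqref{EQ63}) and $\sqrtg(g^{ij}g^{kl}-g^{lj}g^{ik})\partial_{j}\eta^{\alpha}\partial_{k}\eta_{\lambda}$.

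I would treat $\KK_{1}$ first. Commuting $\EE$ through the coefficient separates off a commutator $\KK_{12}$, which I simply keep as a displayed integral since it appears verbatim on the right-hand side of \eqref{EQ77}, leaving a principal term $\KK_{11}$ in which $\EE$ has landed on $\partial_{j}v^{\lambda}$. Using the projector idempotency $\Pi^{\alpha}_{\lambda}=\Pi^{\alpha}_{\mu}\Pi^{\mu}_{\lambda}$ I would rewrite $\KK_{11}$ as a quadratic form symmetric in $\Pi^{\mu}_{\lambda}\partial_{j}\EE v^{\lambda}$ and $\Pi^{\alpha}_{\mu}\partial_{i}\EE v_{\alpha}$; since $\partial_{i}\EE\partial_{t}v_{\alpha}=\partial_{t}\partial_{i}\EE v_{\alpha}$, the Leibniz rule then produces the total derivative $-\frac{1}{2}\frac{d}{dt}\int_{\Gamma_{1}}\sqrtg g^{ij}\Pi^{\mu}_{\lambda}\partial_{j}\EE v^{\lambda}\Pi^{\alpha}_{\mu}\partial_{i}\EE v_{\alpha}$ plus a remainder carrying $\partial_{t}(\sqrtg g^{ij}\Pi^{\alpha}_{\lambda})$. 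Bounding that factor in $L^{\infty}(\Gamma_{1})$ by $P(\Vert\eta\Vert_{H^{2.5+\delta_{0}}})\Vert v\Vert_{H^{2.5+\delta_{0}}}$ and invoking the trace inequality $\Vert\barpar\EE v\Vert_{L^{2}(\Gamma_{1})}\les\Vert\EE v\Vert_{H^{1.5}}$, the remainder is absorbed into the error term $P(\Vert\eta\Vert_{H^{2.5+\delta_{0}}})(\Vert v\Vert_{H^{2.5+\delta_{0}}}+1)\Vert\EE v\Vert_{H^{1.5}}^{2}$ of \eqref{EQ77}.

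For $\KK_{2}$ I would proceed in the same spirit, splitting off the commutator $\KK_{22}$ (again kept as a displayed integral, matching \eqref{EQ77}) and keeping the principal part $\KK_{21}$. Exploiting the antisymmetry of $g^{ij}g^{kl}-g^{lj}g^{ik}$ exactly as in \cite{CoutandShkollerFreeBoundary}, one rewrites $\KK_{21}=-\int_{\Gamma_{1}}\sqrtg^{-1}(\partial_{t}\det A^{1}+\det A^{2}+\det A^{3})$ with $A^{1},A^{2},A^{3}$ the $2\times 2$ matrices displayed just before the lemma. The contributions $\int_{\Gamma_{1}}\sqrtg^{-1}\det A^{2}$, $\int_{\Gamma_{1}}\sqrtg^{-1}\det A^{3}$ and $\int_{\Gamma_{1}}\partial_{t}(\sqrtg^{-1})\det A^{1}$ are harmless: using $|\det A^{1}|\les|\barpar\eta|^{2}(\EE\barpar v)^{2}$, $\Vert\partial_{t}(\sqrtg^{-1})\Vert_{L^{\infty}(\Gamma_{1})}\les P(\Vert\eta\Vert_{H^{2.5+\delta_{0}}})\Vert v\Vert_{H^{2.5+\delta_{0}}}$, and the trace bound again, they all fall into the error term of \eqref{EQ77}.

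I expect the one genuinely delicate point to be the term $\KK_{211}=-\int_{\Gamma_{1}}\partial_{t}(\sqrtg^{-1}\det A^{1})$: a direct estimate of $\det A^{1}$ would leave a boundary top-order term $\Vert\EE\barpar v\Vert_{L^{2}(\Gamma_{1})}^{2}$ which cannot be absorbed on the left. The way around this is to integrate by parts tangentially \emph{inside} $\det A^{1}$, before taking the time derivative. Writing $\det A^{1}=\partial_{1}\eta_{\mu}\partial_{2}\eta_{\lambda}(\partial_{1}\EE v^{\mu}\partial_{2}\EE v^{\lambda}-\partial_{2}\EE v^{\mu}\partial_{1}\EE v^{\lambda})$ and moving one of the outermost tangential derivatives off $\EE v$, the two resulting terms of type $\partial_{1}\eta_{\mu}\partial_{2}\eta_{\lambda}\,\EE v^{\mu}\,\partial_{1}\partial_{2}\EE v^{\lambda}$ cancel because $\partial_{1}$ and $\partial_{2}$ commute, so $\int_{\Gamma_{1}}\sqrtg^{-1}\det A^{1}=-\int_{\Gamma_{1}}Q^{i}_{\mu\lambda}(\barpar\eta,\barpar^{2}\eta)\,\EE v^{\mu}\,\partial_{i}\EE v^{\lambda}$, where $Q^{i}_{\mu\lambda}$ is rational and \emph{linear} in $\barpar^{2}\eta$, hence equal to $\tilde Q^{i}_{\mu\lambda}(\barpar\eta)\barpar^{2}\eta$ for a rational $\tilde Q$. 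This is now a quadratic form in $\barpar\EE v$ whose coefficient depends on $\eta$ alone, so $\KK_{211}=\frac{d}{dt}\int_{\Gamma_{1}}\tilde Q^{i}_{\mu\lambda}(\barpar\eta)\barpar^{2}\eta\,\EE v^{\mu}\,\partial_{i}\EE v^{\lambda}$, the second total-derivative term in \eqref{EQ77}. Assembling $\KK_{11}$, $\KK_{12}$, $\KK_{21}=\KK_{211}+\KK_{212}+\KK_{213}+\KK_{214}$, and $\KK_{22}$ yields \eqref{EQ77}.
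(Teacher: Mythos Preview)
Your proposal is correct and follows essentially the same approach as the paper: the same use of the boundary condition \eqref{ZZ16} and the identity from \cite{DisconziKukavicaIncompressible}, the same $\KK_{1}+\KK_{2}$ split and commutator extraction, the same projector idempotency to pull out the coercive total derivative, the same determinant rewriting of $\KK_{21}$, and the same tangential integration by parts inside $\det A^{1}$ to produce the $\tilde Q$ total-derivative term. The only (inconsequential) slip is in wording: after the integration by parts, $\int_{\Gamma_{1}}\sqrtg^{-1}\det A^{1}$ is bilinear in $\EE v$ and $\barpar\EE v$, not quadratic in $\barpar\EE v$, but the conclusion that $\KK_{211}$ is a total time derivative is exactly right.
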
 \colb \gsdfgdsfgdsfg Note that the third and the fourth terms are of commutator type. Since it is needed in the next two sections, we show here an estimate for  the time integral of the second term on the right side of \eqref{EQ77}. We have   \begin{align}     \begin{split}     &     \int_{\Gamma_1}    \tilde Q_{\mu\lambda}^{i}(\bar\partial\eta)\bar\partial^2\eta      \EE v^{\mu}      \partial_{i} \EE v^{\lambda} \restr_{t}      \les      \Vert    \tilde Q_{\mu\lambda}^{i}(\bar\partial\eta)\bar\partial^2\eta    \Vert_{H^{0.5-\nu}(\Gamma_1)}      \Vert          \EE v^{\mu} \Vert_{H^{0.5+\nu}(\Gamma_1)}      \Vert          \partial_{i} \EE v^{\lambda}    \Vert_{L^2(\Gamma_1)}     \\&\indeq     \les      \Vert        \tilde Q(\bar\partial\eta)\bar\partial^2\eta \Vert_{H^{0.5-\nu}(\Gamma_1)}      \Vert          \EE v \Vert_{H^{0.5+\nu}(\Gamma_1)}      \Vert          \barpar \EE v    \Vert_{L^2(\Gamma_1)}     \\&\indeq     \les      \bigl(      \Vert       \tilde Q(\bar\partial\eta)      \Vert_{L^\infty}      +      \Vert       \tilde Q(\bar\partial\eta)      \Vert_{H^{1}(\Gamma_1)}      \bigr)     \Vert     \barpar^2\eta    \Vert_{H^{0.5-\nu}(\Gamma_1)}      \Vert          \EE v \Vert_{H^{0.5+\nu}(\Gamma_1)}      \Vert          \partial_{i} \EE v    \Vert_{L^2(\Gamma_1)}     ,    \end{split}    \label{EQ42a}   \end{align} where we used   \begin{equation}    \Vert A B\Vert_{H^{0.5-\nu}(\Gamma_1)}    \les    (     \Vert A \Vert_{L^\infty(\Gamma_1)}     +     \Vert A \Vert_{H^1(\Gamma_1)}    )    \Vert B \Vert_{H^{0.5-\nu}(\Gamma_1)}    \label{EQ213}   \end{equation} in the last inequality. Note that \eqref{EQ213} follows by a simple application of the Kato-Ponce fractional chain rule. Using that  $H^{1+\delta_0}(\Gamma_1)$ is an algebra, we obtain from \eqref{EQ42a}   \begin{align}     \begin{split}     &     \int_{\Gamma_1}    \tilde Q_{\mu\lambda}^{i}(\bar\partial\eta)\bar\partial^2\eta      \EE v^{\mu}      \partial_{i} \EE v^{\lambda} \restr_{t}     \les     P(\Vert \barpar\eta\Vert_{H^{1+\delta_0}(\Gamma_1)})     \Vert \barpar^2\eta\Vert_{H^{0.5-\nu}(\Gamma_1)}      \Vert          \EE v \Vert_{H^{1+\nu}}      \Vert          \EE v    \Vert_{H^{1.5}}     \\&\indeq     \les     P(\Vert \eta\Vert_{H^{2.5+\delta_0}})     \Vert \eta\Vert_{H^{3-\nu}}      \Vert          \EE v \Vert_{L^2}^{(1-2\nu)/3}      \Vert          \EE v    \Vert_{H^{1.5}}^{(5+2\nu)/3}    \\&\indeq     \les     P(\Vert \eta\Vert_{H^{3-\nu}})       \left(      \Vert          \EE v(0) \Vert_{L^2}^2       +        \int_{0}^{t}      \Vert          \EE \partial_{t}v    \Vert_{L^2}^2      \right)      +      \epsilon_0      \Vert          \EE v    \Vert_{H^{1.5}}^2     ,    \end{split}    \llabel{zk au0 6d ZghM ym3R jfdePG ln 8 s7EQ42}   \end{align} from where,  by Lemma~\ref{L01}(i),   \begin{equation}     \int_{\Gamma_1}    \tilde Q_{\mu\lambda}^{i}(\bar\partial\eta)\bar\partial^2\eta      \EE v      \partial_{i} \EE v      \les      \Vert          \EE v(0) \Vert_{L^2}^2       +        \int_{0}^{t}      \Vert          \EE \partial_{t} v    \Vert_{L^2}^2      +      \epsilon_0      \Vert          \EE v    \Vert_{H^{1.5}}^2    .    \label{EQ68}   \end{equation} \gsdfgdsfgdsfg   \gsdfgdsfgdsfg \gsdfgdsfgdsfg \gsdfgdsfgdsfg \gsdfgdsfgdsfg \gsdfgdsfgdsfg \gsdfgdsfgdsfg \gsdfgdsfgdsfg \gsdfgdsfgdsfg \gsdfgdsfgdsfg \gsdfgdsfgdsfg \startnewsection{The tangential estimate on $\partial_{t} v$}{sec07} In this and the next sections,  we perform energy estimates on the quantity $    \Vert \EE \partial_{t} v\Vert_{L^2} $ with $\EE=\tpar^{1-\nu/2}$ and $\EE=\partial_{t}$, respectively, where $    \tpar    = (I-\Delta_2)^{1/2} $ with $\Delta_2=\partial_{1}^2+\partial_{2}^2$ denoting the horizontal Laplacian. In both cases, we apply $\EE\partial_{t}$ to \eqref{ZZ13}, multiply the resulting equation with $\EE\partial_{t} v$, and integrate, obtaining   \begin{align}    \begin{split}    \frac12    \frac{d}{dt}    \Vert\EE\partial_{t}v\Vert_{L^2}^2    &=    - \into\EE\partial_{t}(a^{\mu\alpha}\partial_{\mu}q)\EE \partial_{t}v_{\alpha}    =       \into\EE\partial_{t}(a^{\mu\alpha}q)\EE \partial_{t}\partial_{\mu}v_{\alpha}    - \intu\EE\partial_{t}(a^{\mu\alpha}q)\EE \partial_{t}v_{\alpha} N_{\mu}    \end{split}    \label{EQ88}   \end{align} since $   - \intl\EE\partial_{t}(a^{\mu\alpha}q)\EE \partial_{t}v_{\alpha} N_{\mu}=0$ by \eqref{ZZ17}  and $a^{31}=a^{32}=0$ on $\Gamma_0$ due to $a^{31}=\partial_{1}\eta^{2}\partial_{2}\eta^{3}-\partial_{2}\eta^{2}\partial_{1}\eta^{3}$ and    $a^{32}=\partial_{2}\eta^{1}\partial_{1}\eta^{3}-\partial_{1}\eta^{1}\partial_{2}\eta^{3}$. \gsdfgdsfgdsfg \gsdfgdsfgdsfg \gsdfgdsfgdsfg In this section, we set $    \EE    = \tpar^{1-\nu/2} $. The most important assertion in the next statement is that it provides control of $     \Vert v^3\Vert_{H^{2-\nu/2}(\Gamma_1)} $ needed further below. \gsdfgdsfgdsfg \cole \begin{Lemma} \label{L05} The Lagrangian velocity $v$ and its derivative $\partial_{t}v$ satisfy   \begin{align}   \begin{split}     & \Vert \tpar^{1-\nu/2} \partial_{t} v  \Vert_{L^{2}}^{2}     + \Vert \Pi \barpar\tpar^{1-\nu/2}v \Vert_{L^{2}(\Gamma_{1})}^{2}    \les       P_0    +    \epsilon_0 \Vert  v\Vert_{H^{2.5-\nu/2}}^2    +    \int_{0}^{t} P    ,   \end{split}    \label{ZZ24}   \end{align} where $P$ is a polynomial in $\Vert v \Vert_{H^{3-\nu}}$,  $ \Vert \partial_{t}v \Vert_{H^{1.5}}$,  $\Vert q \Vert_{H^{2.5-\nu/2}}$, and $ \Vert \partial_{t}q \Vert_{H^{1}}$, while $P_0$ is a polynomial in $\Vert v_0\Vert_{H^{3-\nu}}$ and $\Vert \partial_{t}v(0)\Vert_{H^{1.5}}$. \end{Lemma}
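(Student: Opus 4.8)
The plan is to specialize the energy identity \eqref{EQ88} to $\EE=\tpar^{1-\nu/2}$, integrate in time, and then dispose of the resulting interior and boundary contributions via the cofactor identity of Lemma~\ref{L12}, the boundary identity of Lemma~\ref{L14}, and the pressure and geometry bounds of Lemmas~\ref{L04} and~\ref{L01}. First one notes that $\tpar^{1-\nu/2}=(I-\Delta_2)^{(1-\nu/2)/2}$ is a Fourier multiplier in the horizontal variables, so it commutes with $\partial_{t}$ and with every $\partial_\alpha$; hence \eqref{EQ48}--\eqref{EQ56} hold and Lemmas~\ref{L12} and~\ref{L14} apply with this choice of $\EE$. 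Integrating \eqref{EQ88} over $[0,t]$ and recalling that the $\Gamma_0$ boundary term vanishes, one obtains
\begin{equation*}
 \frac12\Vert\tpar^{1-\nu/2}\partial_{t}v\Vert_{L^2}^2\bigm|_{0}^{t}
 =\int_0^t\into\EE\partial_{t}(a^{\mu\alpha}q)\,\EE\partial_{t}\partial_\mu v_\alpha+\int_0^t\KK ,
\end{equation*}
where $\KK=-\intu\EE\partial_{t}(a^{\mu\alpha}q)\,\EE\partial_{t}v_\alpha N_\mu$, and the task is to bound the right-hand side by $P_0+\epsilon_0\Vert v\Vert_{H^{2.5-\nu/2}}^2+\int_0^t P$ modulo the coercive term $\Vert\Pi\barpar\tpar^{1-\nu/2}v\Vert_{L^2(\Gamma_1)}^2$ that is produced on the boundary.

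For the interior integral I would write $\EE\partial_{t}(a^{\mu\alpha}q)=q\,\EE\partial_{t}a^{\mu\alpha}+[\EE,q]\partial_{t}a^{\mu\alpha}+\EE(a^{\mu\alpha}\partial_{t}q)$. The first, leading term is precisely the integral $T$ of \eqref{EQ45} with $\DD=\EE\partial_{t}=\tpar^{1-\nu/2}\partial_{t}$, so it is replaced by the right-hand side of \eqref{EQ55}. In that identity the terms at $t=0$ go into $P_0$ (using $\Vert\eta(0)\Vert_{H^{3-\nu}}\les1$, $\barpar^2\eta(0)=0$, and $\Vert\tpar^{1-\nu/2}v_0\Vert_{L^2}\les\Vert v_0\Vert_{H^{1-\nu/2}}$); the genuinely time-integrated terms, after distributing $\partial_{t}$ and using Lemma~\ref{L01} and the pressure bounds \eqref{ZZ22}--\eqref{ZZ23}, are $\les\int_0^t P$; and the terms evaluated at time $t$ are estimated through the duality pairing $H^{0.5-\nu/2}\times H^{-0.5+\nu/2}$ (for which $\Vert\EE\partial_{t}\partial_\mu v\Vert_{H^{-0.5+\nu/2}}\les\Vert\partial_{t}v\Vert_{H^{1.5}}$) together with the interpolation $\Vert v\Vert_{H^{2-\nu/2}}\les\Vert v\Vert_{H^{2.5-\nu/2}}^{\theta}\Vert v\Vert_{L^2}^{1-\theta}$ with $\theta<1$, Young's inequality, and $\Vert v\Vert_{L^2}\les P_0+\int_0^t P$, which turns them into $\epsilon_0\Vert v\Vert_{H^{2.5-\nu/2}}^2+P_0+\int_0^t P$. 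The remaining pieces $[\EE,q]\partial_{t}a^{\mu\alpha}$ and $\EE(a^{\mu\alpha}\partial_{t}q)$ are first integrated by parts in $\partial_\mu$, the Piola identity \eqref{ZZ00} killing the terms in which $\partial_\mu$ hits $a^{\mu\alpha}$, and then estimated with the Kato--Ponce fractional product and commutator inequalities, Lemma~\ref{L01}(iii), and \eqref{ZZ22}--\eqref{ZZ23}; since $\tpar^{1-\nu/2}$ costs strictly less than one derivative and a commutator with a coefficient recovers one, each pairing again lands in an admissible duality class and contributes only $\int_0^t P$.

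For the boundary integral I would apply Lemma~\ref{L14} with $\EE=\tpar^{1-\nu/2}$, bounding $\int_0^t\KK$ by the time integral of the right-hand side of \eqref{EQ77}. After the $t$-integration, the leading term becomes $-\frac12\int_{\Gamma_1}\sqrt{g}g^{ij}\Pi_\lambda^\mu\EE\partial_j v^\lambda\,\Pi_\mu^\alpha\EE\partial_i v_\alpha\big|_t$ plus a $t=0$ term lying in $P_0$; writing $\sqrt{g}g^{ij}=\delta^{ij}+(\sqrt{g}g^{ij}-\delta^{ij})$ and using Lemma~\ref{L01}(iv) to make $\Vert\sqrt{g}g^{ij}-\delta^{ij}\Vert_{L^\infty(\Gamma_1)}\le\epsilon_0$, this equals $-\frac12\Vert\Pi\barpar\tpar^{1-\nu/2}v\Vert_{L^2(\Gamma_1)}^2$ up to an $\epsilon_0\Vert\Pi\barpar\tpar^{1-\nu/2}v\Vert_{L^2(\Gamma_1)}^2$ error, and the coercive term is moved to the left (where the $\epsilon_0$-error is absorbed). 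The term $\frac{d}{dt}\int_{\Gamma_1}\tilde Q_{\mu\lambda}^{i}(\barpar\eta)\barpar^2\eta\,\EE v^\mu\partial_i\EE v^\lambda$ integrates to an expression vanishing at $t=0$ (again $\barpar^2\eta(0)=0$) and bounded at time $t$, by \eqref{EQ68}, by $\Vert\tpar^{1-\nu/2}v(0)\Vert_{L^2}^2+\int_0^t\Vert\tpar^{1-\nu/2}\partial_{t}v\Vert_{L^2}^2+\epsilon_0\Vert\tpar^{1-\nu/2}v\Vert_{H^{1.5}}^2$; here $\Vert\tpar^{1-\nu/2}\partial_{t}v\Vert_{L^2}\les\Vert\partial_{t}v\Vert_{H^{1.5}}$ so the integral is $\les\int_0^t P$, $\Vert\tpar^{1-\nu/2}v\Vert_{H^{1.5}}=\Vert v\Vert_{H^{2.5-\nu/2}}$ yields the admissible $\epsilon_0\Vert v\Vert_{H^{2.5-\nu/2}}^2$, and the initial term belongs to $P_0$. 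The two commutator integrals in \eqref{EQ77} are handled by moving $\partial_i$ off $\EE\partial_{t}v_\alpha$ (integration by parts on the closed surface $\Gamma_1$) and $\partial_{t}$ off as well (integration by parts in time), after which the fractional commutator estimate \eqref{EQ213}, Lemma~\ref{L01}, and the trace inequality give $\int_0^t P+\epsilon_0\Vert v\Vert_{H^{2.5-\nu/2}}^2+P_0$; finally the last term of \eqref{EQ77} is $\les\int_0^t P(\Vert\eta\Vert_{H^{2.5+\delta_0}})(\Vert v\Vert_{H^{2.5+\delta_0}}+1)\Vert\tpar^{1-\nu/2}v\Vert_{H^{1.5}}^2\les\int_0^t P$, using $\Vert\eta\Vert_{H^{2.5+\delta_0}}\les1$ and the interpolation $\Vert v\Vert_{H^{2.5+\delta_0}}\les\Vert v\Vert_{H^{2.5-\nu/2}}^{1-\rho}\Vert v\Vert_{H^{3-\nu}}^{\rho}$. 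Collecting all contributions and moving the coercive and the $\epsilon_0$-small boundary terms to the left gives \eqref{ZZ24}.

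The step I expect to be the main obstacle is the treatment of the commutator terms — the interior ones coming from $[\tpar^{1-\nu/2},q]$, and, more delicately, the two boundary commutators in \eqref{EQ77} — in which a horizontal fractional operator of order $1-\nu/2$ must be exchanged with coefficients built out of $\barpar\eta$, $\barpar^2\eta$, and the induced metric, coefficients that themselves sit only slightly above the algebra threshold at this regularity. The whole point is to arrange the integrations by parts (in $x_i$ on $\Gamma_1$ and in $t$) so that every surviving pairing is a legitimate $H^{s}\times H^{-s}$ duality with $s=0.5-\nu/2>0$, and so that the only top-order leftover is the absorbable $\epsilon_0\Vert v\Vert_{H^{2.5-\nu/2}}^2$ rather than a term with an unbounded coefficient such as $\Vert q\Vert_{L^\infty}\Vert v\Vert_{H^{2.5-\nu/2}}^2$; keeping that bookkeeping tight, and matching it precisely with what the cofactor cancellation of Lemma~\ref{L12} actually buys, is the technically demanding part.
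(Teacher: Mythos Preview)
Your treatment of the boundary term $I_4$ via Lemma~\ref{L14} is correct and matches the paper. The gap is in the interior term.

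You propose to handle the leading interior piece $\int_\Omega q\,\EE\partial_t a^{\mu\alpha}\,\EE\partial_t\partial_\mu v_\alpha$ via the cofactor identity of Lemma~\ref{L12}. This is an overcomplication that does not close. With $\EE=\tpar^{1-\nu/2}$, the pointwise-in-time terms on the right of \eqref{EQ55} are of the form $\int_\Omega q\,\epsilon^{\alpha\beta\tau}\partial_\gamma\tpar^{1-\nu/2}v_\tau\,\partial_\delta\tpar^{1-\nu/2}v_\alpha\big|_t$ (note that they carry $\EE v$, \emph{not} $\EE\partial_t v$ as you wrote, so your duality bound $\Vert\EE\partial_t\partial_\mu v\Vert_{H^{-0.5+\nu/2}}\les\Vert\partial_t v\Vert_{H^{1.5}}$ is not the relevant one). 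Any H\"older splitting of this trilinear form leaves a factor of $q(t)$ in a Sobolev norm multiplying a quadratic expression in $v(t)$; since $\Vert q(t)\Vert_{H^s}$ for $s>1$ is not bounded by $P_0+\int_0^t P$ (only $\Vert q\Vert_{H^1}$ is, via $q=q(0)+\int_0^t\partial_t q$), the product cannot be reduced to $P_0+\epsilon_0\Vert v\Vert_{H^{2.5-\nu/2}}^2+\int_0^t P$. Your interpolation-and-Young argument handles the $v$-factors but does nothing for the pointwise $q$-coefficient.

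The paper avoids this entirely: because $\tpar^{1-\nu/2}$ costs strictly less than $1.5$ horizontal derivatives, the interior terms are bounded \emph{directly} under the time integral, with no cofactor cancellation. One writes $I_1=\int_\Omega\tpar^{1.5-\nu}(\partial_t a^{\mu\alpha}q)\,\tpar^{0.5}\partial_t\partial_\mu v_\alpha$ and applies the Kato--Ponce product rule to get $I_1\les(\Vert\partial_t a\Vert_{H^{2-\nu}}\Vert q\Vert_{H^1}+\Vert\partial_t a\Vert_{H^1}\Vert q\Vert_{H^{2-\nu}})\Vert\partial_t v\Vert_{H^{1.5}}\les P$; the divergence-free condition \eqref{ZZ14} and a Kato--Ponce commutator inequality dispose of $I_2$ and $I_3$ the same way. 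Thus $\int_0^t(I_1+I_2+I_3)\les\int_0^t P$, and the only sources of the $\epsilon_0\Vert v\Vert_{H^{2.5-\nu/2}}^2$ term are the boundary contributions from Lemma~\ref{L14} and \eqref{EQ68}. Lemma~\ref{L12} is reserved in the paper for the $\partial_t^2 v$ estimate in Section~\ref{sec08}, where $\EE=\partial_t$ makes direct estimation of $J_1$ impossible and where, correspondingly, structured pointwise terms such as $\Vert q\Vert_{H^1}^{a}\Vert q\Vert_{H^{2.5-\nu/2}}^{b}\Vert\partial_t v\Vert_{L^2}^{c}\Vert\partial_t v\Vert_{H^{1.5}}^{d}$ are explicitly admitted in the conclusion \eqref{EQ23}.
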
 \colb \gsdfgdsfgdsfg Using the notation \eqref{EQ13}--\eqref{EQ00},  the inequality \eqref{ZZ24} implies   \begin{align}   \begin{split}    \Vert \Pi \barpar\tpar^{1-\nu/2}v \Vert_{L^{2}(\Gamma_{1})}^2    &    \les        \epsilon_0 E_0^2     +     P_0     +\int_{0}^{t} P     ,   \end{split}    \label{EQ135}   \end{align} where, as mentioned above, $\epsilon_0>0$ denotes an arbitrarily small constant. \gsdfgdsfgdsfg \gsdfgdsfgdsfg \gsdfgdsfgdsfg \begin{proof}[Proof of Lemma~\ref{L05}] From \eqref{EQ88}, we have the equation   \begin{align}    \begin{split}    \frac12    \frac{d}{dt}    \Vert\tpar^{1-\nu/2}\partial_{t}v\Vert_{L^2}^2    = I_1 + I_2 + I_3 + I_4    ,    \end{split}    \llabel{x HYC IV9 Hw Ka6v EjH5 J8Ipr7 Nk CEQ89b}   \end{align} where   \begin{align}    \begin{split}    I_1    &=    \into\tpar^{1-\nu/2}(\partial_{t}a^{\mu\alpha}q)\tpar^{1-\nu/2} \partial_{t}\partial_{\mu}v_{\alpha}    \comma\ \ \ \ \ \ \ \ \ \ \     I_2    =     \into a^{\mu\alpha}\tpar^{2-\nu}\partial_{t}q\partial_{t}\partial_{\mu}v_{\alpha}    \\    I_3    &=       \into    \Bigl(      \tpar^{2-\nu}(a^{\mu\alpha}\partial_{t}q)      - a^{\mu\alpha}\tpar^{2-\nu}\partial_{t}q    \Bigr)    \partial_{t}\partial_{\mu}v_{\alpha}    \comma       I_4    =    - \int_{\Gamma_1}          \tpar^{1-\nu/2} \partial_{t}( a^{\mu\alpha}q) \tpar^{1-\nu/2} \partial_{t} v_{\alpha}N_{\mu}    .    \end{split}    \label{EQ91}   \end{align} Using multiplicative Sobolev inequalities, we have   \begin{align}    \begin{split}     I_1     &=    \into\tpar^{1.5-\nu}(\partial_{t}a^{\mu\alpha}q)\tpar^{0.5} \partial_{t}\partial_{\mu}v_{\alpha}     \les     \sum_{\mu,\alpha}     \bigl\Vert      \tpar^{1.5-\nu}       (\partial_{t}a^{\mu\alpha} q)     \bigr\Vert_{L^2}     \Vert\partial_{t}v\Vert_{H^{1.5}}     \\&     \les     \sum_{\mu,\alpha}     \bigl\Vert       \partial_{t}a^{\mu\alpha} q     \bigr\Vert_{H^{1.5-\nu}}     \Vert\partial_{t}v\Vert_{H^{1.5}}     \les     \Vert       \partial_{t}a     \Vert_{H^{2-\nu}}     \Vert q\Vert_{H^{1}}     \Vert\partial_{t}v\Vert_{H^{1.5}}     +     \Vert       \partial_{t}a     \Vert_{H^{1}}     \Vert q\Vert_{H^{2-\nu}}     \Vert\partial_{t}v\Vert_{H^{1.5}}     \les     P    \end{split}    \llabel{ xWR 84T Wnq s0 fsiP qGgs Id1fs5 3EQ97}   \end{align} using $L^2$ based Kato-Ponce type estimates (fractional product rule), as in \cite{KukavicaTuffaha-Free2dEuler,KukavicaTuffaha-RegularityFreeEuler}. For the second term in \eqref{EQ91}, we use the divergence-free condition \eqref{ZZ14} to write   \begin{align}    \begin{split}    I_2    &=     - \into \partial_{t}a^{\mu\alpha}\tpar^{2-\nu}\partial_{t}q\partial_{\mu}v_{\alpha}       =     - \into          \partial_{t}a^{\mu\alpha}         \partial_{\mu}v_{\alpha}            \tpar^{1-\nu}\tpar\partial_{t}q    =     -    \int \tpar^{1-\nu}(\partial_{t}a^{\mu\alpha}\partial_{\mu}v_{\alpha})   \tpar\partial_{t}q    \\&    \les    \Vert \tpar^{1-\nu}(\partial_{t}a^{\mu\alpha}\partial_{\mu}v_{\alpha})\Vert_{L^2}    \Vert \partial_{t} q\Vert_{H^{1}}    \\&    \les    \Vert \tpar^{1-\nu}\partial_{t}a^{\mu\alpha}\Vert_{H^{1}}    \Vert \partial_{\mu}v_{\alpha}\Vert_{H^{0.5}}    \Vert\partial_{t}q\Vert_{H^{1}}    +    \Vert \partial_{t}a^{\mu\alpha}\Vert_{H^{1}}    \Vert \tpar^{1-\nu}\partial_{\mu}v_{\alpha}\Vert_{H^{0.5}}    \Vert\partial_{t}q\Vert_{H^{1}}    \les    P    \comma     \end{split}    \llabel{A T 71q RIc zPX 77 Si23 GirL 9MQZ4EQ94}   \end{align} again using the fractional chain rule. The last interior term $I_3$ is estimated as   \begin{align}    \begin{split}    I_3    &\les       \Vert \tpar^{2-\nu} (a^{\mu\alpha}\partial_t q)                         -a^{\mu\alpha}\tpar^{2-\nu}\partial_t q       \Vert_{L^{3/2}}      \Vert \partial_{t}\partial_{\mu}v\Vert_{L^{3}}    \les    \Vert a \Vert_{H^{2-\nu}}    \Vert \partial_{t} q \Vert_{H^{1}}    \Vert \partial_{t} v\Vert_{H^{1.5}}    \les P    \end{split}    \label{EQ116}   \end{align} For completeness, we show the validity of the second inequality above as the Kato-Ponce inequality  can only be applied in the first two variables.  We do so by the successive integration. For any fixed $x_3\in(0,1)$, we employ the Kato-Ponce inequality to obtain   \begin{align}    \begin{split}      &       \Vert \tpar^{2-\nu} (a^{\mu\alpha}\partial_t q)                         -a^{\mu\alpha}\tpar^{2-\nu}\partial_t q       \Vert_{L_{x_1,x_2}^{3/2}}     \les     \Vert \tpar^{2-\nu}a\Vert_{L_{x_1,x_2}^2}     \Vert \partial_{t} q\Vert_{L_{x_1,x_2}^{6}}     +     \Vert \tpar a\Vert_{L_{x_1,x_2}^{6/(1+2\nu)}}     \Vert \tpar^{1-\nu}\partial_{t}q\Vert_{L_{x_1,x_2}^{6/(3-2\nu)}}    \end{split}    \llabel{F pi g dru NYt h1K 4M Zilv rRk6 B4EQ182}   \end{align} (cf.~\cite{KP,GO,KPV91,KWZ}), where $L^{p}_{x_1,x_2}$ denotes the $L^{p}$ norm in $(x_1,x_2)$. Taking the $L_{x_3}^{3/2}$ norm of both sides  and applying the H\"older inequality in the $x_3$ variable gives   \begin{align}    \begin{split}      &       \Vert \tpar^{2-\nu} (a^{\mu\alpha}\partial_t q)                         -a^{\mu\alpha}\tpar^{2-\nu}\partial_t q       \Vert_{L^{3/2}}     \\&\indeq     \les      \bigl\Vert         \Vert \tpar^{2-\nu}a\Vert_{L_{x_1,x_2}^2}      \Vert \partial_{t} q\Vert_{L_{x_1,x_2}^{6}}     \bigr\Vert_{L_{x_3}^{3/2}}     +      \bigl\Vert      \Vert \tpar a\Vert_{L_{x_1,x_2}^{6/(1+2\nu)}}      \Vert \tpar^{1-\nu}\partial_{t}q\Vert_{L_{x_1,x_2}^{6/(3-2\nu)}}     \bigr\Vert_{L_{x_3}^{3/2}}     \\&\indeq     \les     \Vert \tpar^{2-\nu}a\Vert_{L^2}     \Vert \partial_{t} q\Vert_{L^{6}}     +     \Vert \tpar a\Vert_{L^{6/(1+2\nu)}}     \Vert \tpar^{1-\nu}\partial_{t}q\Vert_{L^{6/(3-2\nu)}}     \\&\indeq     \les     \Vert \tpar^{2-\nu}a\Vert_{L^2}     \Vert \partial_{t} q\Vert_{L^{6}}     +     \Vert a\Vert_{H^{2-\nu}}     \Vert \partial_{t}q\Vert_{H^{1}}    \end{split}    \llabel{W5B8 Id 3 Xq9 nhx EN4 P6 ipZl a2UQEQ183}   \end{align} where we used the Sobolev inequality in the last step. \gsdfgdsfgdsfg Finally, we use  Lemma~\ref{L14}  with $    \EE    = \tpar^{1-\nu/2} $ to write   \begin{align}    \begin{split}      I_4      &\les \emb{         -    \frac12    \frac{d}{dt}    \int_{\Gamma_1}         \sqrt{g} g^{ij}           \Pi_{\lambda}^{\mu}         \tpar^{1-\nu/2}      \partial_j v^\lambda           \Pi_{\mu}^{\alpha}      \tpar^{1-\nu/2}\partial_{i} v_{\alpha} }    + \emb{    \frac{d}{dt}    \int_{\Gamma_1}    \tilde Q_{\mu\lambda}^{i}(\bar\partial\eta)\bar\partial^2\eta      \tpar^{1-\nu/2} v^{\mu}      \partial_{i} \tpar^{1-\nu/2} v^{\lambda} }    \\&\indeq\indeq    -    \int_{\Gamma_1}     \Bigl(      \tpar^{1-\nu/2}        (                \sqrt{g} g^{ij} \Pi_{\lambda}^{\alpha} \partial_j v^\lambda          )       -         \sqrt{g} g^{ij} \Pi_{\lambda}^{\alpha} \tpar^{1-\nu/2}\partial_j  v^\lambda         \Bigr)      \partial_{i} \tpar^{1-\nu/2} \partial_{t}v_{\alpha}    \\&\indeq\indeq      -       \int_{\Gamma_1}        \biggl(         \tpar^{1-\nu/2}         \bigl(          \sqrt{g}(g^{ij} g^{kl} - g^{lj}g^{ik} )           \partial_{j}\eta^{\alpha}                  \partial_{k}\eta_{\lambda}          \partial_{l} v^{\lambda}         \bigr)         \\&\indeq\indeq\indeq\indeq         -          \sqrt{g}(g^{ij} g^{kl} - g^{lj}g^{ik} )          \partial_{j}\eta^{\alpha}                  \partial_{k}\eta_{\lambda}          \partial_{l}\tpar^{1-\nu/2} v^{\lambda}        \biggr)         \partial_{i}\tpar^{1-\nu/2}\partial_{t} v_{\alpha}    \\&\indeq\indeq     +     P(       \Vert\eta\Vert_{H^{2.5+\delta_0}}      )      (     \Vert v\Vert_{H^{2.5+\delta_0}}+1)      \Vert v\Vert_{H^{2.5-\nu/2}}^2    \end{split}    \label{EQ19}   \end{align} where, recall,  $\delta_0>0$ is arbitrarily small. The first term in \eqref{EQ19} leads to the second term of \eqref{ZZ24}. Namely, using   \begin{equation}    \sqrtg g^{ij}\xi_{i}\xi_{j}    \geq    \frac{1}{C} |\xi|^2    \comma \xi\in{\mathbb R}^{2}    \label{EQ227}   \end{equation} for $t$ as in Lemma~\ref{L01}(iv), we get   \begin{align}    
\begin{split}    &    \frac12    \int_{\Gamma_1}         \sqrt{g} g^{ij}           \Pi_{\lambda}^{\mu}         \tpar^{1-\nu/2}      \partial_j v^\lambda           \Pi_{\mu}^{\alpha}      \tpar^{1-\nu/2}\partial_{i} v_{\alpha}     \geq     \frac{1}{C}    \int_{\Gamma_1}          \Pi_{\lambda}^{\mu}         \tpar^{1-\nu/2}    \partial_i v^\lambda       \Pi_{\mu}^{\alpha}      \tpar^{1-\nu/2}\partial_{i} v_{\alpha}      =      \frac{1}{C}      \Vert \Pi \barpar (\tpar^{1-\nu/2} v) \Vert_{L^{2}(\Gamma_{1})}^{2}    .    \end{split}    \llabel{ Qx8mda g7 r VD3 zdD rhB vk LDJo tEQ228}   \end{align} In order to establish \eqref{EQ227}, we write $    g^{ij}\xi_{i}\xi_{j}    =    |\xi|^2    +    (\sqrtg g^{ij}-\delta^{ij})\xi_{i}\xi_{j} $ and appeal to Lemma~\ref{L01}(iv). \gsdfgdsfgdsfg \gsdfgdsfgdsfg Note that the last term in \eqref{EQ19} is dominated by $P$. We integrate the inequality \eqref{EQ19} in time on $[0,t]$ and then integrate by parts in time in the third and the fourth terms. Since both integrals are treated the same way, we only estimate the time integral of the third term. Denoting   \begin{equation}    A^{i\alpha}    =         \tpar^{1-\nu/2}        (                \sqrt{g} g^{ij} \Pi_{\lambda}^{\alpha} \partial_j v^\lambda          )       -         \sqrt{g} g^{ij} \Pi_{\lambda}^{\alpha} \tpar^{1-\nu/2}\partial_j  v^\lambda      ,    \llabel{KyV 5IrmyJ R5 e txS 1cv EsY xG zj2EQ185}   \end{equation} we have   \begin{align}      \begin{split}      \Vert A^{i\alpha}\Vert_{L^2(\Gamma_1)}      &\les      \Vert       \tpar^{1-\nu/2}       ( \sqrt{g} g^{ij} \Pi_{\lambda}^{\alpha})      \Vert_{L^{4/(1+\nu)}(\Gamma_1)}      \Vert\partial_j v^\lambda  \Vert_{L^{4/(1-\nu)}(\Gamma_1)}      \\&\indeq      +      \Vert       \tpar       ( \sqrt{g} g^{ij} \Pi_{\lambda}^{\alpha})      \Vert_{L^{4/(1+2\nu)}(\Gamma_1)}      \Vert\tpar^{-\nu/2} \partial_j v^\lambda  \Vert_{L^{4/(1-2\nu)}(\Gamma_1)}      \\&      \les      \Vert       \tpar^{1-\nu/2}       ( \sqrt{g} g^{ij} \Pi_{\lambda}^{\alpha})      \Vert_{H^{(1-\nu)/2}(\Gamma_1)}      \Vert\partial_j v^\lambda  \Vert_{H^{(1+\nu)/2}(\Gamma_1)}      \\&\indeq      +      \Vert       \tpar       ( \sqrt{g} g^{ij} \Pi_{\lambda}^{\alpha})      \Vert_{H^{1/2-\nu}(\Gamma_1)}      \Vert\tpar^{-\nu/2} \partial_j v^\lambda  \Vert_{H^{1/2+\nu}(\Gamma_1)}      \\&      \les      \Vert       \tpar^{1-\nu/2}       ( \sqrt{g} g^{ij} \Pi_{\lambda}^{\alpha})      \Vert_{H^{1-\nu/2}}      \Vert\partial_j v^\lambda  \Vert_{H^{1+\nu/2}}      \\&\indeq      +      \Vert       \tpar       ( \sqrt{g} g^{ij} \Pi_{\lambda}^{\alpha})      \Vert_{H^{1-\nu}}      \Vert\tpar^{-\nu/2} \partial_j v^\lambda  \Vert_{H^{1+\nu}}      \les      P(\Vert\eta\Vert_{H^{3-\nu}})      \Vert v\Vert_{H^{2+\nu/2}}      \end{split}    \llabel{T rfSR myZo4L m5 D mqN iZd acg GQ EQ186}    \end{align} where we used the commutator inequality (2.11)~in~\cite{KPV}. Now, the time integral of the third term on the right side of \eqref{EQ19} may then be estimated using integration by parts in time as   \begin{align}      \begin{split}    &    -    \int_{0}^{t}    \int_{\Gamma_1}     \Bigl(      \tpar^{1-\nu/2}        (                \sqrt{g} g^{ij} \Pi_{\lambda}^{\alpha} \partial_j v^\lambda          )       -         \sqrt{g} g^{ij} \Pi_{\lambda}^{\alpha} \tpar^{1-\nu/2}\partial_j  v^\lambda         \Bigr)      \partial_{i} \tpar^{1-\nu/2} \partial_{t}v_{\alpha}     \\&\indeq      =    -    \int_{0}^{t}    \int_{\Gamma_1}      A^{i\alpha}      \partial_{i} \tpar^{1-\nu/2} \partial_{t}v_{\alpha}     =    \emb{    -    \int_{\Gamma_1}      A^{i\alpha}      \partial_{i} \tpar^{1-\nu/2} v_{\alpha}     \bigm|_{0}^{t}    }    +    \int_{0}^{t}    \int_{\Gamma_1}      \partial_{t}      A^{i\alpha}      \partial_{i} \tpar^{1-\nu/2} v_{\alpha}     \\&\indeq     =    -    \int_{\Gamma_1}      A^{i\alpha}      \partial_{i} \tpar^{1-\nu/2} v_{\alpha}     \bigm|_{0}^{t}    \\&\indeq\indeq    +    \int_{0}^{t}    \int_{\Gamma_1}     \Bigl(         \tpar^{1-\nu/2}        (                \sqrt{g} g^{ij} \Pi_{\lambda}^{\alpha} \partial_j \partial_{t}v^\lambda          )       -         \sqrt{g} g^{ij} \Pi_{\lambda}^{\alpha} \tpar^{1-\nu/2}\partial_j\partial_{t}  v^\lambda        \Bigr)      \partial_{i} \tpar^{1-\nu/2} v_{\alpha}     \\&\indeq\indeq    +    \int_{0}^{t}    \int_{\Gamma_1}     \Bigl(         \tpar^{1-\nu/2}        (                \partial_{t}         (          \sqrt{g} g^{ij} \Pi_{\lambda}^{\alpha}         )        \partial_j v^\lambda          )       -        \partial_{t}        (           \sqrt{g} g^{ij} \Pi_{\lambda}^{\alpha}        )        \tpar^{1-\nu/2}\partial_j  v^\lambda        \Bigr)      \partial_{i} \tpar^{1-\nu/2} v_{\alpha}     \\&\indeq     \les      P_0     +     P(\Vert\eta\Vert_{H^{3-\nu}})     \Vert v\Vert_{H^{2+\nu/2}}     \Vert v\Vert_{H^{2.5-\nu/2}}    +    \int_{0}^{t} P     \\&\indeq     \les      P_0     + \ema{     \epsilon_0 \Vert v\Vert_{H^{2.5-\nu/2}}^2 }    +    \int_{0}^{t} P    .     \llabel{0KRw QKGX g9o8v8 wm B fUu tCO cKc EQ187}      \end{split}    \end{align} We estimate the fourth term in \eqref{EQ19} the same way. For the second term on the right side of \eqref{EQ19}, we use \eqref{EQ68} and obtain   \begin{equation}     \int_{\Gamma_1}    \tilde Q_{\mu\lambda}^{i}(\bar\partial\eta)\bar\partial^2\eta      \tpar^{1-\nu/2} v^{\mu}      \partial_{i} \tpar^{1-\nu/2} v^{\lambda}      \les      P_0      + \int_{0}^{t}P      + \ema{      \epsilon_0 \Vert  v\Vert_{H^{2.5-\nu/2}}^2 }    .    \label{EQ70}   \end{equation} Collecting all the estimates and using the bound \eqref{EQ70}, we obtain   $    \int_{0}^{t}      I_4      \les      P_0    +    \epsilon_0 \Vert  v\Vert_{H^{2.5-\nu/2}}^2    +    \int_{0}^{t}      P $, and \eqref{ZZ24} follows. \end{proof} \gsdfgdsfgdsfg \gsdfgdsfgdsfg \gsdfgdsfgdsfg \gsdfgdsfgdsfg \gsdfgdsfgdsfg \gsdfgdsfgdsfg \gsdfgdsfgdsfg \gsdfgdsfgdsfg \gsdfgdsfgdsfg \gsdfgdsfgdsfg \gsdfgdsfgdsfg \gsdfgdsfgdsfg \gsdfgdsfgdsfg \gsdfgdsfgdsfg \gsdfgdsfgdsfg \gsdfgdsfgdsfg \gsdfgdsfgdsfg \gsdfgdsfgdsfg \gsdfgdsfgdsfg \startnewsection{The $L^2$ estimate on $\partial_{t}^2 v$}{sec08} \gsdfgdsfgdsfg We have \eqref{EQ88} with $    \EE=\partial_{t} $, i.e.,   \begin{align}    \begin{split}    \frac12    \frac{d}{dt}    \Vert\partial_{t}^2v\Vert_{L^2}^2    &=      \into\partial_{t}^2(a^{\mu\alpha}q) \partial_{t}^2\partial_{\mu}v_{\alpha}    - \intu\partial_{t}^2(a^{\mu\alpha}q)\partial_{t}^2v_{\alpha} N_{\mu}    .    \end{split}    \llabel{zz kx4U fhuA a8pYzW Vq 9 Sp6 CmA cEQ05}   \end{align} We rewrite this as   \begin{align}    \begin{split}    \frac12    \frac{d}{dt}    \Vert\partial_{t}^2v\Vert_{L^2}^2    &    = J_1 + J_2 + J_3 + J_4    \end{split}    \llabel{ZL Mx ceBX Dwug sjWuii Gl v JDb 08EQ89a}   \end{align} where   \begin{align}    \begin{split}    J_1    &=    \into\partial_t^2a^{\mu\alpha} q\partial_t^2\partial_{\mu}v_{\alpha}       \comma    J_2    =    2 \into\partial_{t}a^{\mu\alpha}\partial_t q\partial_t^2\partial_{\mu}v_{\alpha}       \\    J_3    &=    \into a^{\mu\alpha}\partial_{t}^2 q\partial_t^2\partial_{\mu}v_{\alpha}       \comma    J_4    =    - \int_{\Gamma_1}          \partial_{t}^2( a^{\mu\alpha}q) \partial_{t}^2 v_{\alpha}N_{\mu}    .    \end{split}    \label{EQ99}   \end{align} \gsdfgdsfgdsfg \gsdfgdsfgdsfg \cole \begin{Lemma} \label{L10} The  time derivative of the Lagrangian  velocity  $\partial_{t}v$ and its second derivative $\partial_{t}^2v$ satisfy   \begin{align}   \begin{split}    &    \Vert \partial_{t}^2 v \Vert_{L^{2}}^{2}     + \Vert \Pi \barpar \partial_{t}v \Vert_{L^{2}(\Gamma_{1})}^{2}    \\&\indeq    \les        \Vert q\Vert_{H^{1}}     \Vert v\Vert_{H^{1.5}}^{(3-2\nu)/(2-\nu)}     \Vert v\Vert_{H^{2.5-\nu/2}}^{1/(2-\nu)}     \Vert \partial_{t}v \Vert_{H^{1.5}}    \\&\indeq\indeq    +    \Vert q\Vert_{H^{1}}^{(2-\nu-2\delta_0)/(3-\nu)}    \Vert q\Vert_{H^{2.5-\nu/2}}^{(1+2\delta_0)/(3-\nu)}    \Vert \partial_{t}v\Vert_{L^2}^{2/3}    \Vert \partial_{t}v\Vert_{H^{1.5}}^{4/3}    \left(1+\int_{0}^{t}P\right)    \\&\indeq\indeq    +    \Vert v\Vert_{H^{1.5}}^{(1-\nu)/(2-\nu)}    \Vert v\Vert_{H^{2.5-\nu/2}}^{1/(2-\nu)}    \Vert \partial_{t} q\Vert_{H^{1}}    \Vert \partial_{t} v\Vert_{L^2}^{(2-2\delta_0)/3}    \Vert \partial_{t} v\Vert_{H^{1.5}}^{(1+2\delta_0)/3}      \\&\indeq\indeq      +       \Vert \partial_{t}v \Vert_{L^2}^{1-\nu/3}       \Vert \partial_{t}v \Vert_{H^{1.5}}^{\nu/3}       \Vert \partial_{t}q \Vert_{H^{1}}       \Vert v \Vert_{H^{2.5-\nu/2}}      \\&\indeq\indeq      +         \Vert v\Vert_{H^{1.5}}^{(1-\nu)/(2-\nu)}    \Vert v\Vert_{H^{2.5-\nu/2}}^{1/(2-\nu)}    \Vert v\Vert_{H^{1.5}}^2    \Vert \partial_{t}q\Vert_{H^{1}}      \\&\indeq\indeq      +    \Vert v\Vert_{H^{1.5}}^{2(1-\nu)/(2-\nu)}    \Vert v\Vert_{H^{2.5-\nu/2}}^{2/(2-\nu)}      \Vert \partial_{t}v\Vert_{H^{1.5}}    + \epsilon_0 \Vert \partial_{t} v\Vert_{H^{1.5}}^2    +    P_0    +    \int_{0}^{t} P    ,   \end{split}    \label{EQ23}   \end{align} where $P$ is a polynomial in $\Vert v \Vert_{H^{3-\nu}}$,  $ \Vert \partial_{t}v \Vert_{H^{1.5}}$, $ \Vert \partial_{t}^2v \Vert_{L^2}$,  $\Vert q \Vert_{H^{2.5-\nu/2}}$, and $ \Vert \partial_{t}q \Vert_{H^{1}}$ and $P_0$ is a polynomial in $\Vert v_0\Vert_{H^{3-\nu}}$, $\Vert \partial_{t}v(0)\Vert_{H^{1.5}}$, and $\Vert \partial_{t}^2v(0)\Vert_{L^2}$. \end{Lemma}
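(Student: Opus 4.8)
The plan is to start from \eqref{EQ88} with $\EE=\partial_{t}$, which gives $\tfrac12\tfrac{d}{dt}\Vert\partial_{t}^2v\Vert_{L^2}^2=J_1+J_2+J_3+J_4$ with $J_1,\dots,J_4$ as in \eqref{EQ99}, then to integrate in time on $[0,t]$ and bound each $\int_0^tJ_k$ by the right side of \eqref{EQ23}; the coercive boundary quantity $\Vert\Pi\barpar\partial_{t}v\Vert_{L^2(\Gamma_1)}^2$ will appear with a favorable sign in $\int_0^tJ_4$ and be transferred to the left. The delicate piece is $J_1$, in which both derivatives of $\partial_{t}^2$ fall on $a$: since $\partial_{t}^2\partial_\mu v_\al=\partial_\mu\partial_{t}^2v_\al$ carries one spatial derivative more than $\Vert\partial_{t}^2v\Vert_{L^2}$ controls, it cannot be paired off directly the way $I_1$ was in Lemma~\ref{L05}, and the cofactor cancellation of Lemma~\ref{L12} must be used instead.

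For $\int_0^tJ_1$ I would apply Lemma~\ref{L12} with $\EE=\partial_{t}$, so $\DD=\EE\partial_{t}=\partial_{t}^2$ and $\int_0^tJ_1$ equals the right side of \eqref{EQ55}. The terms there evaluated at time $t$, typified by $\int_\Om q\,\epsilon^{\al2\tau}\partial_3\partial_{t}v_\tau\,\partial_1\partial_{t}v_\al$, I would estimate by placing $q$ in $L^\infty$ and $\partial_{t}v$ in $\dot H^1$: using $\Vert q\Vert_{L^\infty}\les\Vert q\Vert_{H^{1.5+\delta_0}}\les\Vert q\Vert_{H^1}^{(2-\nu-2\delta_0)/(3-\nu)}\Vert q\Vert_{H^{2.5-\nu/2}}^{(1+2\delta_0)/(3-\nu)}$ together with $\Vert\partial_{t}v\Vert_{H^1}^2\les\Vert\partial_{t}v\Vert_{L^2}^{2/3}\Vert\partial_{t}v\Vert_{H^{1.5}}^{4/3}$, and invoking \eqref{ZZ22} to absorb $\Vert q\Vert_{H^{2.5-\nu/2}}$ into $1+\int_0^tP$, yields precisely the second term of \eqref{EQ23}. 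The time-integral terms of \eqref{EQ55}, which carry a factor $\partial_{t}(q\,\epsilon^{\al\lambda\tau}\partial_i\eta_\lambda)$, are polynomial in the quantities appearing in $P$; the terms with the small factor $\int_0^t\partial_i v_\lambda$ are bounded by $\epsilon_0$ times a top-order quantity plus $\int_0^tP$; and in the commutator term $L$ of \eqref{EQ57}, after substituting $\DD=\partial_{t}^2$ the bracketed expressions collapse by the product rule to terms of the form $\partial_i v\,\partial_j v$, so the residual factor $\partial_k\partial_{t}^2v_\al$ can be moved by integration by parts in $x_k$ onto $q\,\partial_i v\,\partial_j v$, leaving interior integrals bounded by $P$; the boundary terms thus produced vanish for $k\in\{1,2\}$ because $N_1=N_2=0$ on $\Gamma_0\cup\Gamma_1$, while those for $k=3$ are handled by the trace inequality of Lemma~\ref{L02} as indicated below.

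For $J_2$ and $J_3$ I would use the Piola identity \eqref{ZZ00} (so $\partial_\mu\partial_{t}a^{\mu\al}=\partial_\mu\partial_{t}^2a^{\mu\al}=0$) together with the twice time-differentiated divergence condition \eqref{ZZ14}, namely $a^{\mu\al}\partial_\mu\partial_{t}^2v_\al=-2\partial_{t}a^{\mu\al}\partial_\mu\partial_{t}v_\al-\partial_{t}^2a^{\mu\al}\partial_\mu v_\al$, to trade each occurrence of $\partial_\mu\partial_{t}^2v_\al$ for a lower-order expression or for a divergence that is then integrated by parts in space. Since \eqref{ZZ23} controls only $\Vert\partial_{t}q\Vert_{H^1}$ and not $\Vert\partial_{t}^2q\Vert$, whenever a factor $\partial_{t}^2q$ is produced I would integrate by parts in time, passing one time derivative onto the companion factor and removing the ensuing $\partial_\mu\partial_{t}^2v_\al$ (and $\partial_{t}^3a$) by a further spatial integration by parts via \eqref{ZZ00} and Lemma~\ref{L01}(iii). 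Combining $L^2$-based Kato--Ponce estimates with the interpolations \eqref{EQ118}, $\Vert v\Vert_{H^{2.5}}\les\Vert v\Vert_{H^{3-\nu}}^{\nu/(1-\nu)}\Vert v\Vert_{H^{2.5-\nu/2}}^{(1-2\nu)/(1-\nu)}$, and their analogues for $\partial_{t}v$, the precise bookkeeping distributes these contributions among the third through sixth terms of \eqref{EQ23}, up to $\epsilon_0\Vert\partial_{t}v\Vert_{H^{1.5}}^2$ and $\int_0^tP$. For $J_4$ I would invoke Lemma~\ref{L14} with $\EE=\partial_{t}$: its leading output, after time integration and the coercivity \eqref{EQ227} (valid for $T$ as in Lemma~\ref{L01}(iv)), gives $-\tfrac1C\Vert\Pi\barpar\partial_{t}v\Vert_{L^2(\Gamma_1)}^2$, which goes to the left; the term $\tfrac{d}{dt}\int_{\Gamma_1}\tilde Q_{\mu\lambda}^{i}(\barpar\eta)\barpar^2\eta\,\partial_{t}v^\mu\,\partial_i\partial_{t}v^\lambda$ is controlled by \eqref{EQ68} with $\EE=\partial_{t}$; the two commutator terms are handled as in the proof of Lemma~\ref{L05}, by integrating by parts in time and using the commutator inequality (2.11) of \cite{KPV}; and the residual $P(\Vert\eta\Vert_{H^{2.5+\delta_0}})(\Vert v\Vert_{H^{2.5+\delta_0}}+1)\Vert\partial_{t}v\Vert_{H^{1.5}}^2$ is absorbed into $\int_0^tP$ since $2.5+\delta_0\le3-\nu$. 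Assembling the four bounds and transferring the coercive boundary term to the left produces \eqref{EQ23}.

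The main obstacle I anticipate is the bookkeeping of the interpolation exponents: the time-evaluated terms coming out of Lemma~\ref{L12} for $J_1$, and the boundary-in-time terms created by the spatial integrations by parts in $J_2$ and $J_3$, must be bounded not by crude top-order norms but by products interpolated sharply between $L^2$ and $H^{1.5}$ for $\partial_{t}v$ and between $H^1$ and $H^{2.5-\nu/2}$ for $q$, so that only $\epsilon_0\Vert\partial_{t}v\Vert_{H^{1.5}}^2$ and the admissible polynomial $P$ remain; this matching is exactly what forces the fractional powers that appear in \eqref{EQ23}. A second delicate point is that $\partial_{t}^2q$ is never controlled by the elliptic estimates, which is why the time integration by parts above is unavoidable; and the $\Gamma_1$ boundary contributions generated by integrating by parts in $x_3$ must be estimated through Lemma~\ref{L02}, exploiting the structural facts that $a^{31}=a^{32}=0$ and $\partial_{t}^2v^3=0$ on $\Gamma_0$ (so that nothing survives there) and the surface-tension boundary condition \eqref{ZZ16} on $\Gamma_1$.
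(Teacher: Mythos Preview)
Your overall scaffold matches the paper's proof closely: the decomposition $J_1+J_2+J_3+J_4$, the use of Lemma~\ref{L12} for $J_1$, of Lemma~\ref{L14} and \eqref{EQ227}/\eqref{EQ68} for $J_4$, and the space--time integrations by parts for $J_2$ and $J_3$ are all what the paper does, and your identification of where most of the terms in \eqref{EQ23} arise is correct.

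There is, however, a genuine gap in your treatment of the commutator $L$ inside $J_1$. With $\DD=\partial_t^2$ the bracketed expressions in \eqref{EQ57} do collapse to $2\,\partial_i v\,\partial_j v$, giving three integrals $L_1,L_2,L_3$ of the form $2\int_0^t\int_\Omega q\,\epsilon^{\alpha\lambda\tau}\partial_i v_\lambda\,\partial_j v_\tau\,\partial_t^2\partial_k v_\alpha$. For $L_3$ (the one with $k=3$) your proposed spatial integration by parts produces the boundary term
\[
2\int_0^t\int_{\Gamma_1} q\,\epsilon^{\alpha\lambda\tau}\partial_1 v_\lambda\,\partial_2 v_\tau\,\partial_t^2 v_\alpha,
\]
and you claim Lemma~\ref{L02} disposes of it. But Lemma~\ref{L02} only furnishes an $H^{-1/2}(\partial\Omega)$ trace for $b^{\mu\alpha}\phi_\alpha N_\mu$ when $\partial_\mu(b^{\mu\alpha}\phi_\alpha)\in L^2$; here $\phi=\partial_t^2 v$ is merely $L^2$, and no choice of $b^{\mu\alpha}$ with $b^{3\alpha}=q\,\epsilon^{\alpha\lambda\tau}\partial_1 v_\lambda\partial_2 v_\tau$ makes $\partial_\mu(b^{\mu\alpha}\partial_t^2 v_\alpha)$ land in $L^2$ without reintroducing $\partial_3\partial_t^2 v$. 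So the claimed step does not go through. (On $\Gamma_0$ the integrand does vanish, but for a different reason than you gave: $\partial_1 v^3=\partial_2 v^3=0$ there kills the $\alpha=1,2$ pieces, while $\partial_t^2 v^3=0$ kills $\alpha=3$.)

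The paper avoids this obstruction by integrating $L_1,L_2,L_3$ by parts in \emph{time} rather than in space: this turns $\partial_t^2\partial_k v_\alpha$ into $\partial_t\partial_k v_\alpha$ (controlled by $\Vert\partial_t v\Vert_{H^{1.5}}$) and produces the pointwise-in-time term
\[
2\int_\Omega q\,\epsilon^{\alpha\lambda\tau}\partial_i v_\lambda\,\partial_j v_\tau\,\partial_t\partial_k v_\alpha\Big|_t
\les \Vert q\Vert_{H^1}\Vert v\Vert_{H^{1.75}}^2\Vert\partial_t v\Vert_{H^{1.5}},
\]
which after interpolating $\Vert v\Vert_{H^{1.75}}\les\Vert v\Vert_{H^{1.5}}^{(3-2\nu)/(4-2\nu)}\Vert v\Vert_{H^{2.5-\nu/2}}^{1/(4-2\nu)}$ gives exactly the \emph{first} term on the right of \eqref{EQ23}---a term your proposal never generates. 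The remaining time-integrated pieces go into $\int_0^t P$. So the fix is simply to replace your spatial integration by parts on $L$ with a time integration by parts; the rest of your argument is essentially the paper's.
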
 \colb \gsdfgdsfgdsfg We recall that $\barpar$ is given by \eqref{EQ37}. With the notation $    G    = \Vert q\Vert_{H^{2.5-\nu/2}} $ and $    H    = \Vert \partial_{t}q\Vert_{H^{1}} $, the equation \eqref{EQ23} may be rewritten as   \begin{align}   \begin{split}    &    \Vert \partial_{t}^2 v \Vert_{L^{2}}^{2}     + \Vert \Pi \barpar  \partial_{t}v \Vert_{L^{2}(\Gamma_{1})}^{2}    \\&\indeq    \les        \epsilon_0 E_1^2     +    \left(P_0+\int_{0}^{t}P\right)    \biggl(     E_0^{1/(2-\nu)}     E_1     +     G^{(1+2\delta_0)/(3-\nu)}     E_1^{4/3}    +    E_0^{1/(2-\nu)}    H    E_1^{(1+2\delta_0)/3}      \\&\indeq\indeq\indeq\indeq\indeq\indeq\indeq\indeq    + E_1^{\nu/3}      H      E_0    +    E_0^{1/(2-\nu)}    H    +    E_0^{2/(2-\nu)}    E_1    + 1    \biggr)    ,   \end{split}    \llabel{h BOV C1 pni6 4TTq Opzezq ZB J y5oEQ124}   \end{align} from where, taking the square root   \begin{align}   \begin{split}    &    \Vert \partial_{t}^2 v \Vert_{L^{2}}    + \Vert \Pi \barpar  \partial_{t}v \Vert_{L^{2}(\Gamma_{1})}    \\&\indeq    \les        \epsilon_0 E_1     +    \left(P_0+\int_{0}^{t}P\right)    \biggl(     E_0^{1/2(2-\nu)}     E_1^{1/2}     +     G^{(1+2\delta_0)/2(3-\nu)}     E_1^{2/3}    +    E_0^{1/2(2-\nu)}    H^{1/2}    E_1^{(1+2\delta_0)/6}      \\&\indeq\indeq\indeq\indeq\indeq\indeq\indeq\indeq    + E_1^{\nu/6}      H^{1/2}      E_0^{1/2}    +    E_0^{1/2(2-\nu)}    H^{1/2}    +    E_0^{1/(2-\nu)}    E_1^{1/2}    + 1    \biggr)    ,   \end{split}    \llabel{ KS8 BhH sd nKkH gnZl UCm7j0 Iv Y EQ189}   \end{align} and then using Young's inequality   \begin{align}   \begin{split}    &    \Vert \partial_{t}^2 v \Vert_{L^{2}}    + \Vert \Pi  \barpar \partial_{t}v \Vert_{L^{2}(\Gamma_{1})}    \\&\indeq    \les       \epsilon_0 E_1    +    \left(P_0+\int_{0}^{t}P\right)    \biggl(     E_0^{1/(2-\nu)}     +     G^{3(1+2\delta_0)/2(3-\nu)}    +    E_0^{3/(5-2\delta_0)(2-\nu)}    H^{3/(5-2\delta_0)}      \\&\indeq\indeq\indeq\indeq\indeq\indeq\indeq\indeq    +      H^{3/(6-\nu)}      E_0^{3/(6-\nu)}    +    E_0^{1/2(2-\nu)}    H^{1/2}    +    E_0^{2/(2-\nu)}    + 1    \biggr)    .   \end{split}    \llabel{jQE 7JN 9fd ED ddys 3y1x 52pbiG LcEQ125}   \end{align} Using the notation \eqref{EQ00},  i.e., $E=E_0^2+E_1+1$, this may be rewritten as   \begin{align}   \begin{split}    &    \Vert \partial_{t}^2 v  \Vert_{L^{2}}    + \Vert \Pi \barpar  \partial_{t}v \Vert_{L^{2}(\Gamma_{1})}    \\&\indeq    \les    \epsilon_0 E    +    \left(P_0+\int_{0}^{t}P\right)    \biggl(     E^{1/2(2-\nu)}     +     G^{3(1+2\delta_0)/2(3-\nu)}    +    E^{3/2(5-2\delta_0)(2-\nu)}    H^{3/(5-2\delta_0)}      \\&\indeq\indeq\indeq\indeq\indeq\indeq\indeq\indeq    +      H^{3/(6-\nu)}      E^{3/2(6-\nu)}    +    E^{1/4(2-\nu)}    H^{1/2}    +    E^{1/(2-\nu)}    + 1    \biggr)    ,   \end{split}    \llabel{ a 71j G3e uli Ce uzv2 R40Q 50JZUBEQ22}   \end{align} where $\delta_0>0$ is arbitrarily small. Using Young's inequality on the terms involving $E^{\gamma}$, where $\gamma\in[0,1)$, we get   \begin{align}   \begin{split}    &    \Vert \partial_{t}^2 v  \Vert_{L^{2}}    + \Vert \Pi \barpar  \partial_{t}v \Vert_{L^{2}(\Gamma_{1})}    \\&\indeq    \les    \epsilon_0 E    +    \left(P_0+\int_{0}^{t}P\right)    \biggl(     G^{3(1+2\delta_0)/2(3-\nu)}    +    H^{6(2-\nu)/(2(5-2\delta_0)(2-\nu)-3)}      \\&\indeq\indeq\indeq\indeq\indeq\indeq\indeq\indeq       +      H^{6/(9- 2\nu)}      +    H^{2(2-\nu)/(7-4\nu)}    + 1    \biggr)    .   \end{split}    \llabel{ uK d U3m May 0uo S7 ulWD h7qG 2FKEQ136}   \end{align} It is easy to check that the exponents of $H$ are all less than $3/4$ for $\delta_0>0$ sufficiently small. (In order to verify $6(2-\nu)/(2(5-2\delta_0)(2-\nu)-3)\le 3/4$, for $\delta_0$ sufficiently small, first set $\delta_0=0$ and check that $6(2-\nu)/(10(2-\nu)-3)< 3/4$ for $\nu\in[0,1/2)$.) Therefore,   \begin{align}   \begin{split}    &    \Vert \partial_{t}^2 v  \Vert_{L^{2}}    + \Vert \Pi \barpar  \partial_{t}v \Vert_{L^{2}(\Gamma_{1})}    \les    \epsilon_0 E    +    \left(P_0+\int_{0}^{t}P\right)    \biggl(     G^{3(1+2\delta_0)/2(3-\nu)}    +      H^{3/4}    + 1    \biggr)    .   \end{split}    \label{EQ103}   \end{align} \gsdfgdsfgdsfg \begin{proof}[Proof of Lemma~\ref{L10}] Let $J_1, J_2, J_3, J_4$ be as in \eqref{EQ99}. \gsdfgdsfgdsfg {\bf Treatment of $J_1$:} For $J_1$, we apply Lemma~\ref{L12} with  $\EE=\partial_{t}$ (that is $\DD=\partial_{t}^2$). We start with the term $L$ in \eqref{EQ57}, which, with $\DD=\partial_{t}^2$, reads   \begin{align}    \begin{split}    L    &=    2    \int_{0}^{t}    \into      q \epsilon^{\al \lambda \tau}      \partial_2 v_\lambda     \partial_3 v_\tau          \partial_{t}^2 \partial_1 v_\alpha     -    2       \int_{0}^{t}    \into      q \epsilon^{\al \lambda \tau}      \partial_1 v_\lambda     \partial_3 v_\tau          \partial_{t}^2 \partial_2 v_\alpha     +       2    \int_{0}^{t}    \into      q \epsilon^{\al \lambda \tau}      \partial_1 v_\lambda     \partial_2 v_\tau          \partial_{t}^2 \partial_3 v_\alpha     \\&     = L_1 + L_2 + L_3    .    \end{split}    \llabel{w2T JX z BES 2Jk Q4U Dy 4aJ2 IXs4 EQ85}   \end{align} We only treat the first term as the other two are handled similarly. Integrating by parts in time, we have   \begin{align}    \begin{split}    L_1    &= \emb{    2    \into      q \epsilon^{\al \lambda \tau}      \partial_2 v_\lambda     \partial_3 v_\tau          \partial_{t} \partial_1 v_\alpha     \bigm|_{0}^{t} }     -2    \int_{0}^{t}    \into      \partial_{t}q      \epsilon^{\al \lambda \tau}      \partial_2 v_\lambda     \partial_3 v_\tau          \partial_{t} \partial_1 v_\alpha     \\&\indeq     -2    \int_{0}^{t}    \into      q      \epsilon^{\al \lambda \tau}      \partial_2 \partial_{t}v_\lambda     \partial_3 v_\tau          \partial_{t} \partial_1 v_\alpha     -2    \int_{0}^{t}    \into      q      \epsilon^{\al \lambda \tau}      \partial_2 v_\lambda     \partial_3 \partial_{t}v_\tau          \partial_{t} \partial_1 v_\alpha     \\&     \les     \Vert q\Vert_{H^{1}}     \Vert v\Vert_{H^{1.75}}^2     \Vert \partial_{t}v \Vert_{H^{1.5}}\bigm|_{t}    +     \Vert q\Vert_{H^{1}}     \Vert v\Vert_{H^{1.75}}^2     \Vert \partial_{t}v \Vert_{H^{1.5}}\bigm|_{0}    \\&\indeq    +    \int_{0}^{t}     \Vert\partial_{t}q\Vert_{H^{1}}     \Vert v\Vert_{H^{2}}     \Vert v\Vert_{H^{1.5}}     \Vert \partial_{t}v\Vert_{H^{1.5}}    +    \int_{0}^{t}     \Vert q\Vert_{H^{1}}     \Vert \partial_{t}v\Vert_{H^{1.5}}     \Vert v\Vert_{H^{2}}     \Vert \partial_{t}v\Vert_{H^{1.5}}    \\&    \les    P_0    + \ema{     \Vert q\Vert_{H^{1}}     \Vert v\Vert_{H^{1.5}}^{(3-2\nu)/(2-\nu)}     \Vert v\Vert_{H^{2.5-\nu/2}}^{1/(2-\nu)}     \Vert \partial_{t}v \Vert_{H^{1.5}}\bigm|_{t} }    +    \int_{0}^{t}     P    ,    \end{split}    \llabel{RNH41s py T GNh hk0 w5Z C8 B3nU BpEQ109}   \end{align} where we used $    \Vert v\Vert_{H^{1.75}}    \les    \Vert v\Vert_{H^{1.5}}^{(3-2\nu)/(4-2\nu)}    \Vert v\Vert_{H^{2.5-\nu/2}}^{1/(4-2\nu)} $ in the last step. On the other hand, the right side of \eqref{EQ55} without $L$ is bounded by   \begin{align}    \begin{split}    &    \Vert q\Vert_{H^{1.5+\delta_0}}    \Vert \partial_{t}v\Vert_{H^{1}}^2    +    \Vert q(0)\Vert_{H^{1.5+\delta_0}}    \Vert \partial_{t}v(0)\Vert_{H^{1}}^2    \\&\indeq\indeq    +    \int_{0}^{t}    \Bigl(      \Vert \partial_{t}q \Vert_{H^{1}}      \Vert \eta\Vert_{H^{2}}      \Vert \partial_{t}v\Vert_{H^{1.5}}^2      +      \Vert q\Vert_{H^{1}}      \Vert v\Vert_{H^{2}}      \Vert \partial_{t}v\Vert_{H^{1.5}}^2     \Bigr)     +    \Vert q\Vert_{H^{1.5+\delta_0}}    \Vert \partial_{t}v\Vert_{H^{1}}^2    \int_{0}^{t} \Vert v\Vert_{H^{2.5+\delta_0}}     \\&\indeq     \les    P_0    + \ema{    \Vert q\Vert_{H^{1}}^{(2-\nu-2\delta_0)/(3-\nu)}    \Vert q\Vert_{H^{2.5-\nu/2}}^{(1+2\delta_0)/(3-\nu)}    \Vert \partial_{t}v\Vert_{L^2}^{2/3}    \Vert \partial_{t}v\Vert_{H^{1.5}}^{4/3}    \left(1+\int_{0}^{t}P\right) }    +    \int_{0}^{t}    P    \comma     \end{split}    \label{EQ105}   \end{align} where $\delta_0>0$ is arbitrarily small. Note that the second term on the right side of \eqref{EQ105} is an upper bound for both the first and the fourth terms on the left. Therefore, we conclude   \begin{align}    \begin{split}    \int_{0}^{t}     J_1     &\les     P_0     +    \Vert q\Vert_{H^{1}}^{(2-\nu-2\delta_0)/(3-\nu)}    \Vert q\Vert_{H^{2.5-\nu/2}}^{(1+2\delta_0)/(3-\nu)}    \Vert \partial_{t}v\Vert_{L^2}^{2/3}    \Vert \partial_{t}v\Vert_{H^{1.5}}^{4/3}    \left(1+\int_{0}^{t}P\right)    \\&\indeq    +     \Vert q\Vert_{H^{1}}     \Vert v\Vert_{H^{1.5}}^{(3-2\nu)/(2-\nu)}     \Vert v\Vert_{H^{2.5-\nu/2}}^{1/(2-\nu)}     \Vert \partial_{t}v \Vert_{H^{1.5}}\bigm|_{t}    +    \int_{0}^{t}     P    .    \end{split}    \llabel{9p 8eLKh8 UO 4 fMq Y6w lcA GM xCHtEQ110}   \end{align} \gsdfgdsfgdsfg \gsdfgdsfgdsfg \nnewpage {\bf Treatment of $J_2$:} Now we bound $    \int_{0}^{t} J_{2}       =            2\int_{0}^{t} \int_{\Omega}  \partial_{t}  a^{\mu \alpha} \partial_{t}q \partial_{\mu} \partial_{t}^2v_{\alpha} $. Using integration by parts in $x_\mu$ and the Piola identity \eqref{ZZ00}, we get   \begin{align}      \begin{split}       \int_{0}^{t} J_{2}       &=          -          2\int_{0}^{t} \int_{\Omega}  \partial_{t}  a^{\mu \alpha} \partial_{t}\partial_{\mu}q \partial_{t}^2v_{\alpha}       +         2\int_{0}^{t} 
\int_{\Gamma_1}  \partial_{t}  a^{\mu \alpha} \partial_{t}q \partial_{t}^2v_{\alpha} N_{\mu}      = \int_{0}^{t}J_{21} + \int_{0}^{t}J_{22}      .     \end{split}    \label{EQ158}    \end{align} The first term is estimated using H\"older inequality as   \begin{equation}    \int_{0}^{t}    J_{21}    \les    \int_{0}^{t}    \Vert \partial_{t} a\Vert_{H^{1.5+\delta_0}}    \Vert \partial_{t}q\Vert_{H^{1}}    \Vert \partial_{t}^2 v\Vert_{L^2}    \les    \int_{0}^{t}    P    .    \llabel{ vlOx MqAJoQ QU 1 e8a 2aX 9Y6 2r lEQ159}   \end{equation} For the second term in \eqref{EQ158}, we integrate by parts in $t$, leading to   \begin{align}      \begin{split}      \int_{0}^{t}      J_{22}      &= \emb{         2 \int_{\Gamma_1}  \partial_{t}  a^{\mu \alpha} \partial_{t}q \partial_{t}v_{\alpha} N_{\mu}\restr_{0}^{t} }       -         2\int_{0}^{t} \int_{\Gamma_1}  \partial_{t}^2  a^{\mu \alpha} \partial_{t}q \partial_{t}v_{\alpha} N_{\mu}       -         2\int_{0}^{t} \int_{\Gamma_1}  \partial_{t}  a^{\mu \alpha} \partial_{t}^2q \partial_{t}v_{\alpha} N_{\mu}      \\&       = J_{221} + \int_{0}^{t} J_{222} + \int_{0}^{t}J_{223}    .      \end{split}    \llabel{IS6 dejK Y3KCUm 25 7 oCl VeE e8p 1EQ160}    \end{align} For the pointwise in time term, we have   \begin{align}     \begin{split}    J_{221}|_{t}     &\les    \Vert \partial_{t}a\Vert_{H^{1/2}(\Gamma_1)}    \Vert \partial_{t} q\Vert_{H^{1/2}(\Gamma_1)}    \Vert \partial_{t} v\Vert_{L^2(\Gamma_1)}     \\&     \les    \Vert \partial_{t}a\Vert_{H^{1}}    \Vert \partial_{t} q\Vert_{H^{1}}    \Vert \partial_{t} v\Vert_{H^{1/2+\delta_0}}    \les    \Vert v\Vert_{H^{2}}    \Vert \partial_{t} q\Vert_{H^{1}}    \Vert \partial_{t} v\Vert_{H^{1/2+\delta_0}}    \\&    \les \ema{    \Vert v\Vert_{H^{1.5}}^{(1-\nu)/(2-\nu)}    \Vert v\Vert_{H^{2.5-\nu/2}}^{1/(2-\nu)}    \Vert \partial_{t} q\Vert_{H^{1}}    \Vert \partial_{t} v\Vert_{L^2}^{(2-2\delta_0)/3}    \Vert \partial_{t} v\Vert_{H^{1.5}}^{(1+2\delta_0)/3} }    .   \end{split}    \llabel{z UJSv bmLd Fy7ObQ FN l J6F RdF kEEQ161}   \end{align} We emphasize that \eqref{ZZ15} and \eqref{ZZ16} should not be used to treat $J_{222}$. Instead, we write   \begin{align}    \begin{split}      \int_{0}^{t}      J_{222}      &=       -         2\int_{0}^{t} \int_{\Gamma_1}  \partial_{t}^2  a^{\mu \alpha} \partial_{t}q \partial_{t}v_{\alpha} N_{\mu}     =       -         2\int_{0}^{t} \int_{\Gamma_1}  \partial_{t}^2  a^{3 \alpha} \partial_{t}q \partial_{t}v_{\alpha}    .    \end{split}    \llabel{m qM N0Fd NZJ0 8DYuq2 pL X JNz 4rOEQ162}    \end{align} From \cite{DisconziKukavicaIncompressible}, recall the formula for the third row of the matrix $a$, which reads   \begin{align}    \begin{split}    a^{3 \cdot} =     \begin{bmatrix}    \partial_1 \eta^2 \partial_2 \eta^3 - \partial_2 \eta^2 \partial_1 \eta^3,    &    \partial_2 \eta^1 \partial_1 \eta^3 - \partial_1 \eta^1 \partial_2 \eta ^3,     &    \partial_1 \eta^1 \partial_2 \eta^2 - \partial_2 \eta^1 \partial_1 \eta^2    \end{bmatrix}    .    \end{split}    \llabel{ ZkZ X2 IjTD 1fVt z4BmFI Pi 0 GKD EQ170}   \end{align} It is essential that \emph{only tangential derivatives appear in each entry}. Therefore, for all $\alpha=1,2,3$,   \begin{align}    \begin{split}      \Vert \partial_{t}^2         a^{3\alpha}\Vert_{L^2(\Gamma_1)}      &\les       \Vert \barpar \eta\Vert_{H^{1+\delta_0}(\Gamma_1)}       \Vert \barpar \partial_{t}v\Vert_{L^2(\Gamma_1)}       +       \Vert \barpar v\Vert_{H^{0.5}(\Gamma_1)}^2      \\&      \les       \Vert \eta\Vert_{H^{2+\delta_0}(\Gamma_1)}       \Vert \partial_{t}v\Vert_{H^{1}(\Gamma_1)}       +       \Vert v\Vert_{H^{1.5}(\Gamma_1)}^2     \\&       \les       \Vert \eta\Vert_{H^{2.5+\delta_0}}       \Vert \partial_{t}v\Vert_{H^{1.5}}       +       \Vert v\Vert_{H^{2}}^2     \les       \Vert \partial_{t}v\Vert_{H^{1.5}}       +       \Vert v\Vert_{H^{2}}^2     \les P     .    \end{split}    \label{EQ164}    \end{align} Thus we have   \begin{align}    \begin{split}     \int_{0}^{t}     J_{222}     & \les     \int_{0}^{t}     \Vert \partial_{t}^2 a\Vert_{L^2(\Gamma_1)}     \Vert \partial_{t} q\Vert_{H^{1/2}(\Gamma_1)}     \Vert \partial_{t} v\Vert_{H^{1/2}(\Gamma_1)}     \les     \int_{0}^{t}     \Vert \partial_{t}^2 a\Vert_{L^2(\Gamma_1)}     \Vert \partial_{t} q\Vert_{H^{1}}     \Vert \partial_{t} v\Vert_{H^{1}}     \leq     \int_{0}^{t} P    \end{split}    \llabel{R2W PhO zH zTLP lbAE OT9XW0 gb T LEQ166}    \end{align} using \eqref{EQ164} and the trace inequality. Lastly, we consider $J_{223}$, for which we use \eqref{ZZ15} and \eqref{ZZ16}:   \begin{align}      \begin{split}       \int_{0}^{t}       J_{223}       &=         2\int_{0}^{t} \int_{\Gamma_1}            a^{\mu \beta} \partial_{\lambda} v_{\beta} a^{\lambda \alpha} 	   \partial_{t}^2q 	   \partial_{t}v_{\alpha} N_{\mu}      \\&         =         2\int_{0}^{t} \int_{\Gamma_1}            \partial_{t}^2( 	   N_{\mu}            a^{\mu \beta}  	   q) 	   \partial_{\lambda} v_{\beta} a^{\lambda \alpha} 	   \partial_{t}v_{\alpha}        -         2\int_{0}^{t} \int_{\Gamma_1}            \partial_{t}^2 a^{\mu \beta}  	   q 	   \partial_{\lambda} v_{\beta} a^{\lambda \alpha} 	   \partial_{t}v_{\alpha} N_{\mu}             \\&\indeq        -          4\int_{0}^{t} \int_{\Gamma_1}            \partial_{t} a^{\mu \beta}             \partial_{t} q 	   \partial_{\lambda} v_{\beta} a^{\lambda \alpha} 	   \partial_{t}v_{\alpha} N_{\mu}            \\&         =         2\int_{0}^{t} \int_{\Gamma_1}            \partial_{t}^2( 	   \partial_{i} (\sqrt{g} g^{ij} \partial_{j}\eta^{\beta})) 	   \partial_{\lambda} v_{\beta} a^{\lambda \alpha} 	   \partial_{t}v_{\alpha}        -         2\int_{0}^{t} \int_{\Gamma_1}            \partial_{t}^2 a^{3 \beta}  	   q 	   \partial_{\lambda} v_{\beta} a^{\lambda \alpha} 	   \partial_{t}v_{\alpha}        \\&\indeq        -          4\int_{0}^{t} \int_{\Gamma_1}            \partial_{t} a^{3 \beta}             \partial_{t} q 	   \partial_{\lambda} v_{\beta} a^{\lambda \alpha} 	   \partial_{t}v_{\alpha}       \\&         =         -         2\int_{0}^{t} \int_{\Gamma_1}            \partial_{t}^2 	    (\sqrt{g} g^{ij} \partial_{j}\eta^{\beta})            \partial_{i}(\partial_{\lambda} v_{\beta} a^{\lambda \alpha} 	   \partial_{t}v_{\alpha})        \\&\indeq        -         2\int_{0}^{t} \int_{\Gamma_1}            \partial_{t}^2 a^{3 \beta}  	   q 	   \partial_{\lambda} v_{\beta} a^{\lambda \alpha} 	   \partial_{t}v_{\alpha}        -          4\int_{0}^{t} \int_{\Gamma_1}            \partial_{t} a^{3 \beta}             \partial_{t} q 	   \partial_{\lambda} v_{\beta} a^{\lambda \alpha} 	   \partial_{t}v_{\alpha}       \\&       = \int_{0}^{t}J_{2231} + \int_{0}^{t}J_{2232} + \int_{0}^{t}J_{2233}    .          \end{split}    \label{EQ167}      \end{align} The term $\int_{0}^{t}J_{2231}$ may now be estimated  with $\int_{0}^{t}P$ by simply expanding. For the second term, we use \eqref{EQ164}, after which it is also bounded by $\int_{0}^{t}P$. The third term is also bounded directly, and thus $    \int_{0}^{t}    J_{223}    \les \int_{0}^{t}P $. Collecting all the inequalities, we get   \begin{align}    \begin{split}    \int_{0}^{t}J_2     &\les     P_0     +     \Vert v\Vert_{H^{1.5}}^{(1-\nu)/(2-\nu)}    \Vert v\Vert_{H^{2.5-\nu/2}}^{1/(2-\nu)}    \Vert \partial_{t} q\Vert_{H^{1}}    \Vert \partial_{t} v\Vert_{L^2}^{(2-2\delta_0)/3}    \Vert \partial_{t} v\Vert_{H^{1.5}}^{(1+2\delta_0)/3}     +     \int_{0}^{t}P    .    \end{split}    \llabel{b3 XRQ qGG 8o 4TPE 6WRc uMqMXh s6 EQ09}   \end{align} \gsdfgdsfgdsfg \nnewpage {\bf Treatment of $J_3$:} Here we estimate $    J_3    =    \into a^{\mu\alpha}\partial_{t}^2 q\partial_t^2\partial_{\mu}v_{\alpha}    $. Using  \eqref{ZZ14} and \eqref{ZZ15}, the term $J_{3}$ can be expressed as   \begin{align*}     J_{3}      &=        -        \int_{\Omega}            \partial_{t}^{2}a^{\mu \alpha} \partial_t^2q \partial_{\mu} v_{\alpha}        - 2 \int_{\Omega}                  \partial_{t}a^{\mu \alpha}                 \partial_t^2q \partial_{\mu} \partial_{t}v_{\alpha}    \\&     =  \int_{\Omega}                a^{\mu \beta}               \partial_{\lambda} \partial_{t}v_{\beta} a^{\lambda \alpha}               \partial_t^2q \partial_{\mu} v_{\alpha}        + \int_{\Omega}                  \partial_{t}(a^{\mu \beta}a^{\lambda \alpha} )                 \partial_{\lambda} v_{\beta}                \partial_t^2q \partial_{\mu} v_{\alpha}        \\&\indeq           - 2 \int_{\Omega}                  \partial_{t}a^{\mu \alpha}                  \partial_t^2q \partial_{\mu} \partial_{t}v_{\alpha}      = J_{31} + J_{32} +J_{33}    .   \end{align*} To treat the term $J_{31}$, we integrate by parts in $x_\lambda$ obtaining   \begin{align}    \begin{split}      J_{31}       &=       -      \int_{\Omega}           \partial_{\lambda} a^{\mu \beta} \partial_{t}v_{\beta}          a^{\lambda \alpha} \partial_t^2q \partial_{\mu} v_{\alpha}       - \int_{\Omega}             a^{\mu \beta} \partial_{t}v_{\beta} a^{\lambda \alpha}            \partial_{\lambda} \partial_t^2q \partial_{\mu} v_{\alpha}       \\&\indeq       -  \int_{\Omega}              a^{\mu \beta} \partial_{t}v_{\beta} a^{\lambda \alpha}             \partial_t^2q \partial_{\lambda}  \partial_{\mu} v_{\alpha}        +  \int_{\Gamma_{1}}             a^{\mu \beta} \partial_{t}v_{\beta} a^{\lambda \alpha}            \partial_t^2q \partial_{\mu} v_{\alpha} N_{\lambda}     \\&     =  J_{311}      +J_{312}      +J_{313}      +J_{314}    \end{split}    \label{ZZ82}   \end{align} where we used \eqref{ZZ00}. Integrating in time the first term and then treating it by integration by parts in time, we get   \begin{align*}    \begin{split}     \int_{0}^{t} J_{311}       &=    \emb{     -      \int_{\Omega}          \partial_{\lambda} a^{\mu \beta} \partial_{t}v_{\beta} a^{\lambda \alpha}         \partial_{t}q \partial_{\mu} v_{\alpha}       \bigm|_{0}^{t} }     + \int_{0}^{t}  \int_{\Omega}          \partial_{t}( \partial_{\lambda} a^{\mu \beta}          \partial_{t}v_{\beta} a^{\lambda \alpha}) \partial_{t}q \partial_{\mu} v_{\alpha}    \\&\indeq     + \int_{0}^{t}  \int_{\Omega}           \partial_{\lambda} a^{\mu \beta} \partial_{t}v_{\beta}          a^{\lambda \alpha} \partial_{t}q \partial_{\mu} \partial_{t}v_{ \alpha}     .    \end{split}   \end{align*} The pointwise in time term in the above sum may be bounded as   \begin{align}    \begin{split}     &     -      \int_{\Omega}          \partial_{\lambda} a^{\mu \beta} \partial_{t}v_{\beta} a^{\lambda \alpha}         \partial_{t}q \partial_{\mu} v_{\alpha}       \bigm|_t       \les       \Vert a \Vert_{H^{1.5}}         \Vert \partial_{t}v \Vert_{H^{\nu/2}}         \Vert \partial_{t}q \Vert_{H^{1}}       \Vert v \Vert_{H^{2.5-\nu/2}}        \les       \Vert \partial_{t}v \Vert_{H^{\nu/2}}         \Vert \partial_{t}q \Vert_{H^{1}}       \Vert v \Vert_{H^{2.5-\nu/2}}        ,    \end{split}    \llabel{x Ofv 8st jDi u8 rtJt TKSK jlGkGw ZZ67}   \end{align} by Lemma~\ref{L01}(ii), and we obtain   \begin{align}    \begin{split}    \int_{0}^{t} J_{311}     \les    P_0    + \ema{       \Vert \partial_{t}v \Vert_{L^2}^{1-\nu/3}       \Vert \partial_{t}v \Vert_{H^{1.5}}^{\nu/3}       \Vert \partial_{t}q \Vert_{H^{1}}       \Vert v \Vert_{H^{2.5-\nu/2}} }    +\int_{0}^{t} P    \end{split}    \label{EQ18}   \end{align} where we used Lemma~\ref{L01}(ii). Similarly, using the divergence condition \eqref{ZZ14},   we have $a^{\lambda\alpha}\partial_{\lambda}\partial_{\mu}v_{\alpha} =- \partial_{\mu}a^{\lambda\alpha}\partial_{\lambda}v_{\alpha}$, and the third term in~\eqref{ZZ82} can be rewritten as $    J_{313}    =    \int_{\Omega}              a^{\mu \beta} \partial_{t}v_{\beta}             \partial_{\mu}a^{\lambda \alpha}             \partial_t^2q \partial_{\lambda}  v_{\alpha}  $. Note that  it has the same structure as $J_{311}$ and it thus satisfies the same estimate.  \gsdfgdsfgdsfg In the term $J_{312}$, we integrate by parts in time, obtaining   \begin{align}    \begin{split}     \int_{0}^{t}     J_{312}      &=  \emb{      -      \int_{\Omega}           a^{\mu \beta} \partial_{t}v_{\beta} a^{\lambda \alpha}            \partial_{\lambda} \partial_t q \partial_{\mu} v_{\alpha}       \restr_{0}^{t} }      +      \int_{0}^{t}      \int_{\Omega}          \partial_{t}( a^{\mu \beta} \partial_{t}v_{\beta} a^{\lambda \alpha} )           \partial_{\lambda} \partial_t q \partial_{\mu} v_{\alpha}      \\&\indeq      +      \int_{0}^{t}      \int_{\Omega}           a^{\mu \beta} \partial_{t}v_{\beta} a^{\lambda \alpha}            \partial_{\lambda} \partial_t q            \partial_{\mu} \partial_{t}v_{\alpha}     .    \end{split}    \llabel{t8 n FDx jA9 fCm iu FqMW jeox 5AkwZZ75}   \end{align} The pointwise in term satisfies   \begin{align}    \begin{split}     &      -      \int_{\Omega}           a^{\mu \beta} \partial_{t}v_{\beta} a^{\lambda \alpha}            \partial_{\lambda} \partial_t q \partial_{\mu} v_{\alpha}      \bigm|_{0}^{t}     \les     \Vert \partial_{t} v\Vert_{H^{\nu/2}}     \Vert \partial_{t}q\Vert_{H^{1}}     \Vert v\Vert_{H^{2.5-\nu/2}}     +     \Vert \partial_{t} v\Vert_{H^{\nu/2}}     \Vert \partial_{t} q\Vert_{H^{1}}     \Vert v\Vert_{H^{2.5-\nu/2}}      \restr_{0}    \\&\indeq    \les    P_0    +     \Vert \partial_{t} v\Vert_{L^2}^{1-\nu/3}     \Vert \partial_{t} v\Vert_{H^{1.5}}^{\nu/3}     \Vert \partial_{t}q\Vert_{H^{1}}     \Vert v\Vert_{H^{2.5-\nu/2}}    \end{split}    \label{ZZ76}   \end{align} where, in particular, we used Lemma~\ref{L01}(ii). (Note that this has the same upper bound as in \eqref{EQ18}.) Therefore,   \begin{align}    \begin{split}     \int_{0}^{t}      J_{312}     &\les     P_0     +     \Vert \partial_{t} v\Vert_{L^2}^{1-\nu/3}     \Vert \partial_{t} v\Vert_{H^{1.5}}^{\nu/3}     \Vert \partial_{t}q\Vert_{H^{1}}     \Vert v\Vert_{H^{2.5-\nu/2}}       + \int_{0}^{t}             P    .    \end{split}    \llabel{3w Sd 8 1vK 8c4 C0O dj CHIs eHUO hZZ77}   \end{align} The boundary term  $J_{314}$ can be expressed as   \begin{align}   \begin{split}    \int_{0}^{t}        J_{314}        &=       \int_{0}^{t}       \int_{\Gamma_{1}}             a^{\mu \beta} \partial_{t}v_{\beta} a^{\lambda \alpha}            \partial_t^2q \partial_{\mu} v_{\alpha} N_{\lambda}        \\&       =       \int_{0}^{t}        \int_{\Gamma_{1}}          \partial_t^2(N_{\lambda}a^{\lambda \alpha}  q )          a^{\mu \beta} \partial_{t}v_{\beta} 	 \partial_{\mu} v_{\alpha}        -        \int_{0}^{t}        \int_{\Gamma_{1}}          \partial_t^2 a^{\lambda \alpha}  q           a^{\mu \beta} \partial_{t}v_{\beta} 	 \partial_{\mu} v_{\alpha} N_{\lambda}         \\&\indeq         -         2 	\int_{0}^{t}               \int_{\Gamma_{1}}          \partial_t a^{\lambda \alpha}  \partial_{t}q           a^{\mu \beta} \partial_{t}v_{\beta} 	 \partial_{\mu} v_{\alpha} N_{\lambda}     .   \end{split}    \llabel{yqGx3 Kw O lDq l1Y 4NY 4I vI7X DE4EQ01}     \end{align} Note that all three terms have the same structure as the three terms in \eqref{EQ167} and are treated analogously, leading to the same upper bounds. \gsdfgdsfgdsfg \gsdfgdsfgdsfg The term $J_{32}$ is treated by using integration by parts in time (and no integration by parts in space). Since all the terms are treated in a straight-forward way, we only estimate the pointwise in time term, which equals   \begin{align}    \begin{split}         &  \emb{        \int_{\Omega}                  \partial_{t}(a^{\mu \beta}a^{\lambda \alpha} )                 \partial_{\lambda} v_{\beta}                \partial_t q \partial_{\mu} v_{\alpha}  }       \les     \Vert \partial_{t}a\Vert_{H^{1}}     \Vert v\Vert_{H^{1.5}}^2     \Vert \partial_{t}q\Vert_{H^{1}}     \\&\indeq     \les     \Vert v\Vert_{H^{2}}     \Vert v\Vert_{H^{1.5}}^2     \Vert \partial_{t}q\Vert_{H^{1}}    \les \ema{    \Vert v\Vert_{H^{1.5}}^{(1-\nu)/(2-\nu)}    \Vert v\Vert_{H^{2.5-\nu/2}}^{1/(2-\nu)}    \Vert v\Vert_{H^{1.5}}^2    \Vert \partial_{t}q\Vert_{H^{1}} }    \end{split}    \llabel{c FeXdFV bC F HaJ sb4 OC0 hu Mj65 EQ113}   \end{align} by \eqref{EQ118}. \gsdfgdsfgdsfg   \gsdfgdsfgdsfg It remains to consider $J_{33}$. We first integrate by parts in $x_{\mu}$ leading to   \begin{align}      \begin{split}        \int_{0}^{t}     J_{33}       &=          - 2 \int_{0}^{t} \int_{\Omega}               \partial_{t}a^{\mu \alpha}               \partial_t^2q \partial_{\mu} \partial_{t}v_{\alpha}         =           2 \int_{0}^{t} \int_{\Omega}               \partial_{t}a^{\mu \alpha}               \partial_t^2 \partial_{\mu}q \partial_{t}v_{\alpha}            - 2 \int_{0}^{t} \int_{\Gamma_1}               \partial_{t}a^{\mu \alpha}               \partial_t^2q  \partial_{t}v_{\alpha} N_{\mu}       \\&       = \int_{0}^{t}J_{331} + \int_{0}^{t}J_{332}          .    \end{split}    \llabel{J4fa vgGo7q Y5 X tLy izY DvH TR zdEQ101}      \end{align} The second term is identical to $\int_{0}^{t}J_{223}$. For $J_{331}$, we integrate by parts in time. The integrated in time terms are controlled by $\int_{0}^{t}P$, while the pointwise in time term evaluated at $t$ reads \begin{equation*} \emb{        2        \into              \partial_{t}a^{\mu \alpha}               \partial_t \partial_{\mu}q \partial_{t}v_{\alpha} \restr_{t} }       \les \ema{       \Vert \partial_{t}a\Vert_{H^{1.5-\nu/2}}       \Vert \partial_{t}\nabla q\Vert_{L^2}       \Vert \partial_{t}v\Vert_{H^{\nu/2}} } . \end{equation*} Note that this is the same upper bound as in \eqref{ZZ76}. \gsdfgdsfgdsfg \gsdfgdsfgdsfg \gsdfgdsfgdsfg \gsdfgdsfgdsfg \gsdfgdsfgdsfg \gsdfgdsfgdsfg \nnewpage {\bf Treatment of $J_{4}$:} It only remains to consider the boundary term $J_{4}$, in which case we use \eqref{EQ77} with $\EE=\partial_{t}$. Thus   \begin{align}    \begin{split}    J_{4} &      \les         -    \frac12    \frac{d}{dt}    \int_{\Gamma_1}         \sqrt{g} g^{ij}           \Pi_{\lambda}^{\mu}         \partial_t      \partial_j v^\lambda           \Pi_{\mu}^{\alpha}      \partial_t\partial_{i} v_{\alpha}    +    \frac{d}{dt}    \int_{\Gamma_1}    \tilde Q_{\mu\lambda}^{i}(\bar\partial\eta)\bar\partial^2\eta      \partial_t v^{\mu}      \partial_{i} \partial_t v^{\lambda}    \\&\indeq    -    \int_{\Gamma_1}     \Bigl(      \partial_t         (                \sqrt{g} g^{ij} \Pi_{\lambda}^{\alpha} \partial_j v^\lambda          )       -         \sqrt{g} g^{ij} \Pi_{\lambda}^{\alpha} \partial_t\partial_j  v^\lambda         \Bigr)      \partial_{i} \partial_t^2v_{\alpha}    \\&\indeq      -       \int_{\Gamma_1}        \biggl(         \partial_t         \Bigl(          
\sqrt{g}(g^{ij} g^{kl} - g^{lj}g^{ik} )          \partial_{j}\eta^{\alpha}                  \partial_{k}\eta_{\lambda}          \partial_{l} v^{\lambda}         \Bigr)         \\&\indeq\indeq\indeq         -          \sqrt{g}(g^{ij} g^{kl} - g^{lj}g^{ik} )           \partial_{j}\eta^{\alpha}                  \partial_{k}\eta_{\lambda}          \partial_{l}\partial_t v^{\lambda}        \biggr)         \partial_{i}\partial_t^2 v_{\alpha}    \\&\indeq      +     P(\Vert \eta\Vert_{H^{2.5+\delta_0}})           (        \Vert v\Vert_{H^{2.5+\delta_0}}+1)      \Vert \partial_{t} v\Vert_{H^{1.5}}^2    \end{split}    \llabel{9x SRVg 0Pl6Z8 9X z fLh GlH IYB x9EQ03}    \end{align} Note that the last term is dominated by $P$. Therefore,   \begin{align}    \begin{split}    \int_{0}^{t}    J_{4}      &\les \emb{         -    \frac12    \int_{\Gamma_1}         \sqrt{g} g^{ij}           \Pi_{\lambda}^{\mu}         \partial_t      \partial_j v^\lambda           \Pi_{\mu}^{\alpha}      \partial_t\partial_{i} v_{\alpha}      \bigm|_{0}^{t} }    + \emb{    \int_{\Gamma_1}    \tilde Q_{\mu\lambda}^{i}(\bar\partial\eta)\bar\partial^2\eta      \partial_t v^{\mu}      \partial_{i} \partial_t v^{\lambda}      \bigm|_{0}^{t} }    \\&\indeq    -    \int_{0}^{t}   \int_{\Gamma_1}      \partial_t         (                \sqrt{g} g^{ij} \Pi_{\lambda}^{\alpha}        ) \partial_j v^\lambda        \partial_{i} \partial_t^2v_{\alpha}    \\&\indeq        -        \int_{0}^{t}      \int_{\Gamma_1}         \partial_t         \Bigl(          \sqrt{g}(g^{ij} g^{kl} - g^{lj}g^{ik} )          \partial_{j}\eta^{\alpha}                  \partial_{k}\eta_{\lambda}         \Bigr)          \partial_{l} v^{\lambda}         \partial_{i}\partial_t^2 v_{\alpha}      +      \int_{0}^{t}P    \\&    = J_{41}+J_{42}+\int_{0}^{t}J_{43}+\int_{0}^{t}J_{44}+\int_{0}^{t}P    .    \end{split}    \llabel{ OELo 5loZ x4wag4 cn F aCE KfA 0uzEQ04}    \end{align} As for $I_{41}$ in the previous section,  the first term $J_{41}$ is the coercive term leading to the second term on the left side of \eqref{EQ23} by simply using \eqref{EQ227}. The second term $J_{42}$ is bounded  in \eqref{EQ68} as $      J_{42}      \les      P_0      + \int_{0}^{t}P \ema{    + \epsilon_0 \Vert \partial_{t} v\Vert_{H^{1.5}}^2 } $. For $J_{43}$ and $J_{44}$, we integrate by parts in time, yielding   \begin{align}    \begin{split}    \int_{0}^{t}J_{43}+\int_{0}^{t}J_{44}     &= \emb{     -    \int_{\Gamma_1}      \partial_t         (                \sqrt{g} g^{ij} \Pi_{\lambda}^{\alpha}        ) \partial_j v^\lambda        \partial_{i} \partial_{t}v_{\alpha}      \restr_{0}^{t} }       +    \int_{0}^{t}   \int_{\Gamma_1}      \partial_t^2        (                \sqrt{g} g^{ij} \Pi_{\lambda}^{\alpha}        ) \partial_j v^\lambda        \partial_{i}  \partial_{t}v_{\alpha}      \\&\indeq       +    \int_{0}^{t}   \int_{\Gamma_1}      \partial_t         (                \sqrt{g} g^{ij} \Pi_{\lambda}^{\alpha}        ) \partial_j \partial_{t}v^\lambda        \partial_{i} \partial_t v_{\alpha}    \\&\indeq \emb{        -        \int_{\Gamma_1}         \partial_t         \Bigl(          \sqrt{g}(g^{ij} g^{kl} - g^{lj}g^{ik} )          \partial_{j}\eta^{\alpha}                  \partial_{k}\eta_{\lambda}         \Bigr)          \partial_{l} v^{\lambda}         \partial_{i}\partial_{t} v_{\alpha}         \restr_{0}^{t} }        \\&\indeq        +        \int_{0}^{t}      \int_{\Gamma_1}         \partial_t^2         \Bigl(          \sqrt{g}(g^{ij} g^{kl} - g^{lj}g^{ik} )          \partial_{j}\eta^{\alpha}                  \partial_{k}\eta_{\lambda}         \Bigr)          \partial_{l} v^{\lambda}         \partial_{i}\partial_t v_{\alpha}        \\&\indeq        +        \int_{0}^{t}      \int_{\Gamma_1}         \partial_t         \Bigl(          \sqrt{g}(g^{ij} g^{kl} - g^{lj}g^{ik} )          \partial_{j}\eta^{\alpha}                  \partial_{k}\eta_{\lambda}         \Bigr)          \partial_{l}\partial_{t} v^{\lambda}         \partial_{i}\partial_{t} v_{\alpha}    \end{split}    \label{EQ107}   \end{align} whence   \begin{align}    \begin{split}    \int_{0}^{t}J_{43}+\int_{0}^{t}J_{44}     &\les     P_0     +     \Vert v\Vert_{H^{2}}^2      \Vert \partial_{t}v\Vert_{H^{1.5}}        +        \int_{0}^{t} P   \\&   \les     P_0     + \ema{    \Vert v\Vert_{H^{1.5}}^{2(1-\nu)/(2-\nu)}    \Vert v\Vert_{H^{2.5-\nu/2}}^{2/(2-\nu)}      \Vert \partial_{t}v\Vert_{H^{1.5}} }        +        \int_{0}^{t} P    \end{split}    \label{EQ192}   \end{align} after a short calculation. Note that both pointwise in time terms in \eqref{EQ107} are estimated by the second term on the far right side of \eqref{EQ192}. The proof of the lemma is thus complete. \end{proof} \gsdfgdsfgdsfg \gsdfgdsfgdsfg \gsdfgdsfgdsfg \startnewsection{Div-Curl estimates and the Cauchy invariance}{sec09} In this section, we use the Cauchy invariance property and the div-curl estimates to the norms $   \Vert v\Vert_{H^{2.5-\nu/2}}$, $   \Vert v\Vert_{H^{3-\nu}}$, and $   \Vert \partial_{t}v\Vert_{H^{1.5}}$ in terms of $\Vert v^3\Vert_{H^{2-\nu/2}(\Gamma_1)}$, $   \Vert v^3\Vert_{H^{2.5-\nu}(\Gamma_1)}$, and $   \Vert \partial_{t}v^3\Vert_{H^{1}(\Gamma_1)}$, respectively. \gsdfgdsfgdsfg \gsdfgdsfgdsfg \gsdfgdsfgdsfg We summarize the resulting inequalities in the following statement. \gsdfgdsfgdsfg \cole \begin{Lemma} \label{L09} For the velocity $v$, we have   \begin{align}    \begin{split}    \Vert v\Vert_{H^{2.5-\nu/2}}    &\les    P_0    +    \Vert v^3\Vert_{H^{2-\nu/2}(\Gamma_1)}    \end{split}    \label{EQ80a}   \end{align} and   \begin{align}    \begin{split}    \Vert v\Vert_{H^{3-\nu}}    &\les    P_0    +    \Vert v^3\Vert_{H^{2.5-\nu}(\Gamma_1)}    .    \end{split}    \label{EQ80b}   \end{align} while for the derivative $\partial_{t}v$, we have   \begin{align}    \begin{split}    \Vert \partial_{t}v\Vert_{H^{1.5}}    &\les    \Vert v\Vert_{H^{2}}^2    +    \Vert \partial_{t}v^3\Vert_{H^{1}(\Gamma_1)}    .    \end{split}    \label{EQ83}   \end{align} \end{Lemma}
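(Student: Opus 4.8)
The plan is to derive all three inequalities from a single classical tool, the div--curl (Hodge) estimate on the fixed reference domain $\Om=\mathbb{T}^{2}\times[0,1]$, whose outward unit normal is $\pm e_{3}$ on $\Gamma_{1}$ and $\Gamma_{0}$: for a vector field $w$ and $s\ge 1$,
$$\Vert w\Vert_{H^{s}}\les\Vert\dive w\Vert_{H^{s-1}}+\Vert\curl w\Vert_{H^{s-1}}+\Vert w^{3}\Vert_{H^{s-1/2}(\Gamma_{1})}+\Vert w^{3}\Vert_{H^{s-1/2}(\Gamma_{0})}+\Vert w\Vert_{L^{2}}.$$
I would apply it to $w=v$ with $s=2.5-\nu/2$ and $s=3-\nu$ (the $\Gamma_{1}$-trace exponents then being exactly $2-\nu/2$ and $2.5-\nu$, as in \eqref{EQ80a}--\eqref{EQ80b}) and to $w=\partial_{t}v$ with $s=1.5$ (trace exponent $1$, as in \eqref{EQ83}). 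The point is that on the right-hand side only the $\Gamma_{1}$-trace survives: everything else is either absorbed into the left side for $T$ small, controlled by the initial data, or (in the $\partial_{t}v$ case) bounded by $\Vert v\Vert_{H^{2}}^{2}$.

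For the divergence I would use the solenoidal constraint \eqref{ZZ14} to write $\dive v=(\delta^{\mu\alpha}-a^{\mu\alpha})\partial_{\mu}v_{\alpha}$; since $\delta-a$ is small in $H^{2-\nu}$ by Lemma~\ref{L01}(iv) for $T$ small, the algebra property of $H^{2-\nu}$ (for $s=3-\nu$) or the multiplicative inequality $\Vert fg\Vert_{H^{r}}\les\Vert f\Vert_{H^{1.5+\delta_{0}}}\Vert g\Vert_{H^{r}}$, $0\le r\le 1.5$, with the rough small factor in the $H^{1.5+\delta_{0}}$ slot (for $s=2.5-\nu/2$), gives $\Vert\dive v\Vert_{H^{s-1}}\les\epsilon_{0}\Vert v\Vert_{H^{s}}$, which is absorbed. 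For the curl I would use the Cauchy invariance: differentiating $\epsilon^{ijk}\partial_{j}\eta^{l}\partial_{k}v_{l}$ in time and using \eqref{ZZ13}, \eqref{ZZ14}, the Piola identity \eqref{ZZ00}, and the relation $a=(\nabla\eta)^{-1}$, every resulting term is a quantity symmetric in two indices contracted against $\epsilon^{ijk}$ and therefore vanishes, so this expression is conserved and at $t=0$ agrees, up to sign, with $\curl v_{0}$. Hence $(\curl v)_{i}=(\curl v_{0})_{i}+\epsilon^{ijk}(\delta^{lk}-\partial_{k}\eta^{l})\partial_{j}v_{l}$, and using the smallness of $\nabla\eta-I$ (Lemma~\ref{L01}(iv)) together with $\Vert\curl v_{0}\Vert_{H^{s-1}}\les\Vert v_{0}\Vert_{H^{3-\nu}}\les P_{0}$ gives $\Vert\curl v\Vert_{H^{s-1}}\les P_{0}+\epsilon_{0}\Vert v\Vert_{H^{s}}$, again absorbed.

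The $\Gamma_{1}$-traces $\Vert v^{3}\Vert_{H^{s-1/2}(\Gamma_{1})}$ are exactly the quantities on the right of \eqref{EQ80a}--\eqref{EQ80b}; on $\Gamma_{0}$ the condition \eqref{ZZ17} with $N=-e_{3}$ gives $v^{3}=0$. For \eqref{EQ83} I would differentiate the two structural identities once more in $t$: $\dive\partial_{t}v=-\partial_{t}a^{\mu\alpha}\partial_{\mu}v_{\alpha}+(\delta^{\mu\alpha}-a^{\mu\alpha})\partial_{\mu}\partial_{t}v_{\alpha}$ and $\partial_{t}(\curl v)_{i}=-\epsilon^{ijk}\partial_{j}v_{l}\partial_{k}v_{l}+\epsilon^{ijk}(\delta^{lk}-\partial_{k}\eta^{l})\partial_{j}\partial_{t}v_{l}$, both of which, by Lemma~\ref{L01}(iii)--(iv) and a product estimate, are bounded in $H^{0.5}$ by $\Vert v\Vert_{H^{2}}^{2}+\epsilon_{0}\Vert\partial_{t}v\Vert_{H^{1.5}}$; on $\Gamma_{1}$ we keep $\Vert\partial_{t}v^{3}\Vert_{H^{1}(\Gamma_{1})}$ and on $\Gamma_{0}$ we have $\partial_{t}v^{3}=0$. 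Absorbing the $\epsilon_{0}\Vert\partial_{t}v\Vert_{H^{1.5}}$ term then gives \eqref{EQ83}, and collecting the four contributions for $v$ gives \eqref{EQ80a} and \eqref{EQ80b}.

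I expect the genuine subtlety to be the $\Vert w\Vert_{L^{2}}$ term in the Hodge estimate, which cannot be dropped because $\mathbb{T}^{2}\times[0,1]$ is not simply connected: it carries a two-dimensional space of harmonic fields $(c_{1},c_{2},0)$ invisible to the divergence, curl, and normal traces. I would pin this finite-dimensional part down by testing \eqref{ZZ13} against constants, which gives $\partial_{t}\int_{\Om}v^{i}=-\int_{\Gamma_{1}}a^{3i}q$ via \eqref{ZZ00} (the $\Gamma_{0}$ term vanishing since $a^{31}=a^{32}=0$ there); on $\Gamma_{1}$ the entries $a^{3i}$ contain only tangential derivatives of $\eta^{3}$, hence are $O(T)$-small, while $\Vert q\Vert_{L^{2}(\Gamma_{1})}\les\Vert\eta\Vert_{H^{2}(\Gamma_{1})}\les 1$ by \eqref{ZZ21}, producing the $P_{0}$ bound for $v$ and a contribution absorbed into $\Vert v\Vert_{H^{2}}^{2}$ for $\partial_{t}v$. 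The only remaining bookkeeping, all routine, is to check at the borderline $\nu$ that the products genuinely sit in the Sobolev ranges where the algebra/multiplier inequalities and Lemma~\ref{L01}(iii)--(iv) apply --- e.g.\ $2-\nu>3/2$ so $H^{2-\nu}$ is an algebra for \eqref{EQ80b}, and $2-\nu\ge 1.5+\delta_{0}$ for small $\delta_{0}$ so the smallness of $a-I$ and $\nabla\eta-I$ is available in the norm used for \eqref{EQ80a} --- and then to absorb all $\epsilon_{0}$-terms into the left sides.
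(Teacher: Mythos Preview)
Your proof follows the paper's exactly: the div--curl estimate \eqref{EQ30}, the Cauchy invariance \eqref{EQ79} for the curl, the rewriting $\dive v=(\delta^{\alpha\beta}-a^{\alpha\beta})\partial_{\alpha}v_{\beta}$ for the divergence, and absorption of the $(I-a)$ and $(I-\nabla\eta)$ pieces via Lemma~\ref{L01}(iv); for $\partial_{t}v$ the paper differentiates the same two identities and bounds the quadratic terms by $\Vert v\Vert_{H^{2}}^{2}$ just as you do. The one point on which you are more careful than the paper is the $\Vert w\Vert_{L^{2}}$ term in the Hodge estimate: the paper simply quotes \eqref{EQ30} without it, whereas you correctly note that $\mathbb{T}^{2}\times[0,1]$ carries the two-dimensional kernel $(c_{1},c_{2},0)$ and pin it down via $\int_{\Om}\partial_{t}v^{i}=-\int_{\Gamma_{1}}a^{3i}q$. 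That argument is sound, but be precise about what it gives for $\partial_{t}v$: since $a^{3i}|_{\Gamma_{1}}=O(\epsilon_{0})$ for $i=1,2$ and $\Vert q\Vert_{L^{2}(\Gamma_{1})}\les 1$ by \eqref{ZZ21} and Lemma~\ref{L01}(i), you obtain a bound by a small constant, not literally by $\Vert v\Vert_{H^{2}}^{2}$. This is harmless for the downstream use in \eqref{EQ83a}, where $P_{0}+\int_{0}^{t}P$ already sits on the right, but it means \eqref{EQ83} as written should carry an additive constant (or be read modulo the harmonic kernel), and you should say so rather than claim absorption into $\Vert v\Vert_{H^{2}}^{2}$.
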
 \colb \gsdfgdsfgdsfg \gsdfgdsfgdsfg \gsdfgdsfgdsfg \gsdfgdsfgdsfg \gsdfgdsfgdsfg {\begin{proof}[Proof of Lemma~\ref{L09}] First, let $     \qq\in\{1.5-\nu/2,2-\nu\} $. By the Cauchy invariance property   \begin{gather}    \epsilon^{\al \be \ga} \partial_\be v^\mu \partial_\ga \eta_\mu     = (\curl v_0)^\al    \label{EQ79}     \end{gather} (cf.~\cite{KukavicaTuffahaVicol-3dFreeEuler}), we obtain   \begin{align}      \begin{split}         (\curl v)^{\alpha}          = \epsilon^{\alpha\beta\gamma}               \partial_{\beta} v_{\gamma} 	 =           \epsilon^{\al \be \ga} \partial_\be v^\mu             ( \delta_{\gamma\mu}-\partial_\ga \eta_\mu)           + (\curl v_0)^{\alpha}    .    \end{split}    \label{EQ177}      \end{align} Similarly, from  the divergence condition \eqref{ZZ14}, we get   \begin{align}      \begin{split}       \dive v       = (\delta^{\alpha\beta}-a^{\alpha\beta})\partial_{\alpha}v_{\beta}    .    \end{split}    \label{EQ178}   \end{align} Using the elliptic estimate   \begin{align}    \begin{split}    \Vert X\Vert_{H^{s}}    \les     \Vert \curl X\Vert_{H^{s-1}}    +    \Vert \dive X\Vert_{H^{s-1}}    +    \Vert X \cdot N\Vert_{H^{s-0.5}(\Gamma_1\cup \Gamma_0)}    \comma s\ge1   \end{split}    \label{EQ30}   \end{align} (\cite{BourguignonBrezis,ShkollerElliptic,CoutandShkollerFreeBoundary}) along with \eqref{ZZ17}, \eqref{EQ177}, and \eqref{EQ178}, we arrive at   \begin{align}    \begin{split}    \Vert v\Vert_{H^{\qq+1}}    &\les    \Vert \nabla \eta-I\Vert_{H^{2-\nu}}    \Vert \nabla v\Vert_{H^{\qq}}    +    \Vert       a - I     \Vert_{H^{2-\nu}}    \Vert \nabla v \Vert_{H^{\qq}}    +    \Vert \curl v_0\Vert_{H^{\qq}}    +    \Vert v^3\Vert_{H^{\qq+0.5}(\Gamma_1)}    .    \end{split}    \label{EQ21}   \end{align} Assuming that the time $T>0$ is sufficiently small as in Lemma~\ref{L01}(iv), the first two terms on the right side of \eqref{EQ21} may be dominated by the left, and we obtain   \begin{align}    \begin{split}    \Vert v\Vert_{H^{\qq+1}}    &\les    \Vert \curl v_0\Vert_{H^{\qq}}    +    \Vert v^3\Vert_{H^{\qq+0.5}(\Gamma_1)}    \comma     \qq\in\left\{1.5-\nu/2,2-\nu\right\}    .    \end{split}    \llabel{ fw HMUV M9Qy eARFe3 Py 6 kQG GFx EQ80}   \end{align} Therefore, we obtain \eqref{EQ80a} and \eqref{EQ80b}. \gsdfgdsfgdsfg Next, we apply the Cauchy invariance to $\partial_{t}v$, i.e.,   \begin{align}    \epsilon^{\al \be \ga}          \partial_\be \partial_{t} v^\mu \partial_\ga \eta_\mu     =    -     \epsilon^{\al \be \ga}          \partial_\be v^\mu \partial_\ga v_\mu    ,    \label{EQ198}   \end{align} obtained by differentiating \eqref{EQ79} in $t$,  which we may rewrite as   \begin{align}    \begin{split}         (\curl \partial_{t}v)^{\alpha}          &= \epsilon^{\alpha\beta\gamma}               \partial_{\beta} \partial_{t}v_{\gamma} 	 =\epsilon^{\alpha\beta\gamma}               \partial_{\beta} \partial_{t}v^{\mu}            (\delta_{\gamma\mu}-\partial_{\gamma}\eta_{\mu})            +           \epsilon^{\alpha\beta\gamma}               \partial_{\beta} \partial_{t}v^{\mu}            \partial_{\gamma}\eta_{\mu}          \\& 	 =           \epsilon^{\al \be \ga} \partial_\be \partial_{t}v^\mu             ( \delta_{\gamma\mu}-\partial_\ga \eta_\mu)          -           \epsilon^{\al \be \ga}              \partial_\be v^\mu \partial_\ga v_\mu    \end{split}    \llabel{rPf 6T ZBQR la1a 6Aeker Xg k blz nEQ111}   \end{align} using \eqref{EQ198} in the last step. On the other hand, the divergence condition for $\partial_{t}v$ may be rewritten as $    \partial^{\beta}    \partial_{t} v_{\beta}    =      (\delta^{\alpha\beta}-a^{\alpha\beta})         \partial_{\alpha}\partial_{t}v_{\beta}    - \partial_{t}a^{\alpha\beta}\partial_{\alpha}v_{\beta} $. Using the div-curl elliptic estimate \eqref{EQ30} with $X=\partial_{t}v$ and $s=1.5$, we get   \begin{align}    \begin{split}    \Vert \partial_{t}v\Vert_{H^{1.5}}    &\les    \Vert \nabla \eta-I\Vert_{H^{2-\nu}}    \Vert \nabla \partial_{t} v\Vert_{H^{0.5}}       +    \Vert       a - I     \Vert_{H^{2-\nu}}    \Vert \nabla \partial_{t} v \Vert_{H^{0.5}}    \\&\indeq    +    \Vert v\Vert_{H^{2}}^2    +    \Vert \partial_{t}a\Vert_{H^{1}}    \Vert v\Vert_{H^{2}}    +    \Vert \partial_{t}v^3\Vert_{H^{1}(\Gamma_1)}    \\&    \les    \Vert \nabla \eta-I\Vert_{H^{2-\nu}}    \Vert \nabla \partial_{t} v\Vert_{H^{0.5}}       +    \Vert       a - I     \Vert_{H^{2-\nu}}    \Vert \nabla \partial_{t} v \Vert_{H^{0.5}}    +    \Vert v\Vert_{H^{2}}^2    +    \Vert \partial_{t}v^3\Vert_{H^{1}(\Gamma_1)}    \end{split}    \llabel{Sm mhY jc z3io WYjz h33sxR JM k DoEQ82}   \end{align} and thus, if $T$ is sufficiently small as in Lemma~\ref{L01}(iv), we obtain \eqref{EQ83}. \end{proof} \gsdfgdsfgdsfg \gsdfgdsfgdsfg \gsdfgdsfgdsfg \gsdfgdsfgdsfg \gsdfgdsfgdsfg \gsdfgdsfgdsfg \gsdfgdsfgdsfg \gsdfgdsfgdsfg \gsdfgdsfgdsfg \gsdfgdsfgdsfg \gsdfgdsfgdsfg \gsdfgdsfgdsfg \gsdfgdsfgdsfg \gsdfgdsfgdsfg \gsdfgdsfgdsfg \startnewsection{Relation between the projection and the normal component of $v$ and $\partial_{t}v$}{sec10} In order to close the estimates, we  need to connect the projections and the normal components of the vector fields $v^3|_{\Gamma_1}$ and $\partial_{t} v^3|_{\Gamma_1}$. We first address the comparison between $\Pi X$ and $X\cdot N$, where $X$ shall be chosen as certain derivative operators of $v$ and $\partial_{t}v$. \gsdfgdsfgdsfg From \eqref{EQ63}, recall that $    \Pi_{\alpha}^{\beta}    =    \delta_{\alpha}^{\beta}    - g^{kl}      \partial_{k}\eta^{\beta}      \partial_{l}\eta_{\alpha} $. Therefore, $     (   \Pi X)^{3}     = \Pi_{\alpha}^{3}X^{\alpha}     = \delta_{\alpha}^{3} X^{\alpha}              - g^{kl}\partial_{l}\eta_{\alpha}\partial_{k}\eta^{3} X^{\alpha} $, from where $    X^{3}    = (\Pi X)^{3} + g^{kl}\partial_{k}\eta_{\alpha}\partial_{l}\eta^{3} X^{\alpha} $. Using $\eta^{3}=\eta^{3}(0)+\int_{0}^{t} v^{3}=x^{3}+\int_{0}^{t}v^{3}$, and thus $\partial_{l}\eta^{3}=\int_{0}^{t}\partial_l v^{3}$, we get   \begin{equation}    X^{3}      = (\Pi X)^{3} + g^{kl}\partial_{k}\eta_{\alpha} X^{\alpha} \int_{0}^{t} \partial_{l} v^{3}      .    \label{EQ148}   \end{equation} \gsdfgdsfgdsfg Applying the formula \eqref{EQ148} with $X=\barpar\partial_{t}v$. From $    \barpar\partial_t v^{3}      = (\Pi \barpar\partial_t v)^{3} + g^{kl}\partial_{k}\eta_{\alpha} \barpar\partial_t v^{\alpha} \int_{0}^{t} \partial_{l} v^{3} $ we obtain   \begin{equation}       \Vert   \barpar\partial_t v^{3}\Vert_{L^2(\Gamma_1)}      \les     \Vert \Pi \barpar\partial_t v\Vert_{L^2(\Gamma_1)}        + \left\Vert g^{kl}\partial_{k}\eta_{\alpha} \barpar\partial_t v^{\alpha} \int_{0}^{t} \partial_{l} v^{3}\right\Vert_{L^2(\Gamma_1)}    .    \llabel{s EAA hUO Oz aQfK Z0cn 5kqYPn W7 1EQ150}   \end{equation} The first term on the right side is estimated in Section~\ref{sec08}. For the second term, we have   \begin{align}      \begin{split}       &\left\Vert g^{kl}\partial_{k}\eta_{\alpha} \barpar\partial_t v^{\alpha} \int_{0}^{t} \partial_{l} v^{3}\right\Vert_{L^2(\Gamma_1)}       \les        \Vert g^{-1}\barpar\eta\Vert_{L^\infty(\Gamma_1)}        \Vert \barpar\partial_{t} v\Vert_{L^2(\Gamma_1)}        \int_{0}^{t}\Vert \barpar v\Vert_{L^\infty(\Gamma_1)}       \\&\indeq             \les        Q(\Vert \eta\Vert_{H^{2.5+\delta_0}})        \Vert \partial_{t} v\Vert_{H^{1.5}}        \int_{0}^{t} \Vert v\Vert_{H^{2.5+\delta_0}}      \les        \Vert \partial_{t} v\Vert_{H^{1.5}}        \int_{0}^{t} \Vert v\Vert_{H^{3-\nu}}      \end{split}    \llabel{ vCT 69a EC9 LD EQ5S BK4J fVFLAo QEQ151}   \end{align} where we used $\Vert \eta\Vert_{H^{2.5+\delta_0}}\les 1$ from Lemma~\ref{L01}(i). Therefore, $    \Vert \barpar\partial_{t} v^{3}\Vert_{L^2(\Gamma_1)}    \les        \Vert \Pi \barpar\partial_t v\Vert_{L^2(\Gamma_1)}        +        \Vert \partial_{t} v\Vert_{H^{1.5}}        \int_{0}^{t} P $. Adding $\Vert \partial_{t}v^{3}\Vert_{L^2(\Gamma_1)}$ to both sides then gives   \begin{align}     \begin{split}      \Vert \partial_{t} v^{3}\Vert_{H^{1}(\Gamma_1)}     &\les        \Vert \Pi \barpar\partial_t v\Vert_{L^2(\Gamma_1)}        +        \Vert \partial_{t}v\Vert_{H^{0.5+\delta_0}}        +        \Vert \partial_{t} v\Vert_{H^{1.5}}        \int_{0}^{t} P     \\&     \les        \Vert \Pi \barpar\partial_t v\Vert_{L^2(\Gamma_1)}        +        \Vert \partial_{t}v\Vert_{L^2}        +        \epsilon_0 \Vert \partial_{t}v\Vert_{H^{1.5}}        +        \Vert \partial_{t} v\Vert_{H^{1.5}}        \int_{0}^{t} P     \\&     \les        \Vert \Pi \barpar\partial_t v\Vert_{L^2(\Gamma_1)}        +        P_0        +        \int_{0}^{t}P        +        \epsilon_0 \Vert \partial_{t}v\Vert_{H^{1.5}}        +        \Vert \partial_{t} v\Vert_{H^{1.5}}        \int_{0}^{t} P        ,   \end{split}    \llabel{p N dzZ HAl JaL Mn vRqH 7pBB qOr7fEQ36}   \end{align} where we used interpolation and Young's inequalities in the second step. We may rewrite the resulting inequality as   \begin{align}     \begin{split}      \Vert \partial_{t} v^{3}\Vert_{H^{1}(\Gamma_1)}     &\les        \Vert \Pi \barpar\partial_t v\Vert_{L^2(\Gamma_1)}        +        \epsilon_0 \Vert \partial_{t}v\Vert_{H^{1.5}}        +        P_0        +         (1+      \Vert \partial_{t} v\Vert_{H^{1.5}})        \int_{0}^{t} P    .   \end{split}    \label{EQ155}   \end{align} Next, we apply \eqref{EQ148} with $X=\barpar\tpar^{1-\nu/2}v$, leading to $    \barpar\tpar^{1-\nu/2}v^{3}    = (\Pi\barpar\tpar^{1-\nu/2}v)^{3}    + g^{kl}\partial_{k}\eta_{\alpha}\barpar\tpar^{1-\nu/2}v^{\alpha}                  \int_{0}^{t}\partial_{l}v^{3} $. Then $     \Vert \barpar\tpar^{1-\nu/2}v^{3}\Vert_{L^2(\Gamma_1)}     \les     \Vert \Pi\barpar\tpar^{1-\nu/2}v\Vert_{L^2(\Gamma_1)}     +     \Vert v\Vert_{H^{2.5-\nu/2}}        \int_{0}^{t} \Vert v\Vert_{H^{2.5+\delta_0}} $. Note that the first term on the right side is estimated in Section~\ref{sec07}. Adding $\Vert v^{3}\Vert_{L^2(\Gamma_1)}$ to both sides gives   \begin{align}     \begin{split}     \Vert v^{3}\Vert_{H^{2-\nu/2}(\Gamma_1)}     &\les     \Vert \Pi\barpar\tpar^{1-\nu/2}v\Vert_{L^2(\Gamma_1)}     +     \Vert v\Vert_{H^{1}}     +     \Vert v\Vert_{H^{2.5-\nu/2}}        \int_{0}^{t} P     \\&     \les     \Vert \Pi\barpar\tpar^{1-\nu/2}v\Vert_{L^2(\Gamma_1)}     +     \epsilon_0         \Vert v\Vert_{H^{2.5-\nu/2}}     +     \Vert v\Vert_{L^2}     +     \Vert v\Vert_{H^{2.5-\nu/2}}        \int_{0}^{t} P    .     \end{split}    \llabel{v oa e BSA 8TE btx y3 jwK3 v244 dlEQ156}    \end{align} \gsdfgdsfgdsfg We rewrite this as   \begin{align}     \begin{split}     \Vert v^{3}\Vert_{H^{2-\nu/2}(\Gamma_1)}     &\les     \Vert \Pi\barpar\tpar^{1-\nu/2}v\Vert_{L^2(\Gamma_1)}     +     \epsilon_0         \Vert v\Vert_{H^{2.5-\nu/2}}        +        P_0        +     (    \Vert v\Vert_{H^{2.5-\nu/2}} + 1)        \int_{0}^{t} P    .     \end{split}    \label{EQ157}    \end{align} \gsdfgdsfgdsfg \gsdfgdsfgdsfg Combining \eqref{EQ80a} with \eqref{EQ157} and choosing $\epsilon_0$ sufficiently small, we get   \begin{align}     \begin{split}    \Vert v\Vert_{H^{2.5-\nu/2}}     &\les     \Vert \Pi\barpar\tpar^{1-\nu/2}v\Vert_{L^2(\Gamma_1)}        +        P_0        +     (    \Vert v\Vert_{H^{2.5-\nu/2}} + 1)        \int_{0}^{t} P    .     \end{split}    \label{EQ38}   \end{align} \colb Combining  \eqref{EQ83} and \eqref{EQ155} with $\epsilon_0>0$ sufficiently small, we obtain   \begin{align}    \begin{split}    \Vert \partial_{t}v\Vert_{H^{1.5}}    &\les        \Vert \Pi \barpar\partial_t v\Vert_{L^2(\Gamma_1)}        +        \Vert v\Vert_{H^{2}}^2        +        P_0        +         (1+      \Vert \partial_{t} v\Vert_{H^{1.5}})        \int_{0}^{t} P     \\&      \les        \Vert \Pi \barpar\partial_t v\Vert_{L^2(\Gamma_1)}        +        \Vert v\Vert_{H^{1.5}}^{2(1-\nu)/(2-\nu)}        \Vert v\Vert_{H^{2.5-\nu/2}}^{2/(2-\nu)}        \\&\indeq        +        P_0        +         (1+      \Vert \partial_{t} v\Vert_{H^{1.5}})        \int_{0}^{t} P    .    \end{split}    \label{EQ83a}   \end{align} \gsdfgdsfgdsfg \gsdfgdsfgdsfg \gsdfgdsfgdsfg \startnewsection{The concluding estimates}{sec11} \gsdfgdsfgdsfg Now, we are ready to combine all the available inequalities to prove Theorem~\ref{T01}. \gsdfgdsfgdsfg {\begin{proof}[Proof of Theorem~\ref{T01}] Squaring \eqref{EQ38} and using \eqref{EQ135}, we get   \begin{align}    \begin{split}    E_0^2    &\les     P_0     +(E_0^2+1)\int_{0}^{t} P    .    \end{split}    \label{EQ133}   \end{align} Also, combining \eqref{EQ103} and \eqref{EQ83a}, we get   \begin{align}      \begin{split}    E_1    &    \les    \epsilon_0 E    +    E\int_{0}^{t}P        +    \left(P_0+\int_{0}^{t}P\right)    \biggl(     G^{3(1+2\delta_0)/2(3-\nu)}    +      H^{3/4}    +      E^{1/(2-\nu)}    + 1    \biggr)      \end{split}    \llabel{fwRL Dc g X14 vTp Wd8 zy YWjw eQmFEQ126a}    \end{align} from where, using Young's inequality,   \begin{align}      \begin{split}    E_1    &    \les    
\epsilon_0 E    +    E\int_{0}^{t}P        +    \left(P_0+\int_{0}^{t}P\right)    \biggl(     G^{3(1+2\delta_0)/2(3-\nu)}    +      H^{3/4}    + 1    \biggr)      \end{split}    \label{EQ126}    \end{align} Finally, we add \eqref{EQ133} and \eqref{EQ126} and choose $\epsilon_0$ sufficiently small so we can absorb $2\epsilon_0 E$, obtaining   \begin{align}    \begin{split}    E    &\les    E\int_{0}^{t}P    +       \left(P_0+\int_{0}^{t}P\right)    \biggl(      G^{3(1+2\delta_0)/2(3-\nu)}      +       H^{3/4}     +     1    \biggr)    .    \end{split}    \label{EQ140}   \end{align} \gsdfgdsfgdsfg \gsdfgdsfgdsfg \gsdfgdsfgdsfg \gsdfgdsfgdsfg \gsdfgdsfgdsfg \gsdfgdsfgdsfg \gsdfgdsfgdsfg \gsdfgdsfgdsfg \gsdfgdsfgdsfg \gsdfgdsfgdsfg \gsdfgdsfgdsfg Now, we turn to establishing the control of  $\Vert v\Vert_{H^{3-\nu}}$. From  \cite{DisconziKukavicaIncompressible}, recall the identity   \begin{align}    \begin{split}      &   \sqrt{g} g^{ij} \partial^2_{ij} v^3 - \sqrt{g} g^{ij} \Gamma_{ij}^k \partial_k v^3     \\&\indeq    =        - \partial_t(\sqrt{g} g^{ij} ) \partial^2_{ij} \eta^3       - \partial_t( \sqrt{g} g^{ij} \Gamma_{ij}^k ) \partial_k \eta^3       -       \partial_t a^{\mu 3} N_\mu q       -     a^{\mu 3} N_\mu \partial_t q \, \text{ on } \, \Gamma_1    ,   \end{split}    \llabel{ yD5y5l DN l ZbA Jac cld kx Yn3V QEQ144}   \end{align} which follows from differentiating \eqref{ZZ16} in $t$ and setting $\alpha=3$. We rewrite the equation above as   \begin{align}    \begin{split}         \Delta v^3 &=    (\delta^{ij}-\sqrt{g} g^{ij}) \partial^2_{ij} v^3    +    \sqrt{g} g^{ij} \Gamma_{ij}^k \partial_k v^3     - \partial_t(\sqrt{g} g^{ij} ) \partial^2_{ij} \eta^3     \\&\indeq    - \partial_t( \sqrt{g} g^{ij} \Gamma_{ij}^k ) \partial_k \eta^3     -       \partial_t a^{\mu 3} N_\mu q       -     a^{\mu 3} N_\mu \partial_t q \, \text{ on } \, \Gamma_1   \end{split}    \llabel{YIV v6fwmH z1 9 w3y D4Y ezR M9 BduEQ211}   \end{align} from where, by ellipticity,   \begin{align}    \begin{split}    \Vert v^{3}\Vert_{H^{2.5-\nu}(\Gamma_1)}    &\les    \Vert          (\delta^{ij}-\sqrt{g} g^{ij}) \partial^2_{ij} v^3    \Vert_{H^{0.5-\nu}(\Gamma_1)}    +    \Vert         \sqrt{g} g^{ij} \Gamma_{ij}^k \partial_k v^3     \Vert_{H^{0.5-\nu}(\Gamma_1)}    \\&\indeq    +    \Vert \partial_t(\sqrt{g} g^{ij} ) \partial^2_{ij} \eta^3\Vert_{H^{0.5-\nu}(\Gamma_1)}    +     \Vert      \partial_t( \sqrt{g} g^{ij} \Gamma_{ij}^k ) \partial_k \eta^3    \Vert_{H^{0.5-\nu}(\Gamma_1)}    \\&\indeq    +\Vert \partial_t a^{\mu 3} N_\mu q\Vert_{H^{0.5-\nu}(\Gamma_1)}    +    \Vert a^{\mu 3} N_\mu \partial_t q\Vert_{H^{0.5-\nu}(\Gamma_1)}    .    \llabel{E L7D9 2wTHHc Do g ZxZ WRW Jxi pv EQ193}    \end{split}    \end{align} Using  \eqref{EQ213}, we get   \begin{align}    \begin{split}    \Vert v^{3}\Vert_{H^{2.5-\nu}(\Gamma_1)}    &\les    \sum_{i,j}     \bigl(     \Vert \delta^{ij}-\sqrt{g} g^{ij} \Vert_{L^\infty}     +     \Vert \delta^{ij}-\sqrt{g} g^{ij} \Vert_{H^{1}(\Gamma_1)}     \bigr)     \Vert v^{3}\Vert_{H^{2.5-\nu}(\Gamma_1)}     \\&\indeq     +    \Vert       \sqrt{g} g^{ij} \Gamma_{ij}^k     \Vert_{H^{0.5}(\Gamma_1)}    \Vert       \partial_k v^3     \Vert_{H^{1-\nu}(\Gamma_1)}    \\&\indeq    +    \bigl(      \Vert \partial_t(\sqrt{g} g^{ij} ) \Vert_{L^\infty}      +      \Vert \partial_t(\sqrt{g} g^{ij} ) \Vert_{H^{1}(\Gamma_1)}    \bigr)    \Vert \partial^2_{ij} \eta^3\Vert_{H^{0.5-\nu}(\Gamma_1)}    \\&\indeq    +     \Vert \partial_t( \sqrt{g} g^{ij} \Gamma_{ij}^k ) \Vert_{H^{0.5-\nu}(\Gamma_1)}     \bigl(    \Vert \partial_k \eta^3 \Vert_{L^{\infty}}    +    \Vert \partial_k \eta^3 \Vert_{H^{1}(\Gamma_1)}    \bigr)    \\&\indeq    +\Vert \partial_t a^{\mu 3} N_\mu \Vert_{H^{1-\nu}(\Gamma_1)}     \Vert q\Vert_{H^{0.5}(\Gamma_1)}    +    \Vert a^{\mu 3} N_\mu \Vert_{H^{1-\nu}(\Gamma_1)}    \Vert \partial_t q\Vert_{H^{0.5}(\Gamma_1)}    .    \end{split}    \llabel{fz48 ZVB7 FZtgK0 Y1 w oCo hLA i70 EQ212}       \end{align} Since $     \Vert \delta^{ij}-\sqrt{g} g^{ij} \Vert_{L^\infty}     +     \Vert \delta^{ij}-\sqrt{g} g^{ij} \Vert_{H^{1}}     \leq     \epsilon_0     $ by Lemma~\ref{L01}(iv) (ensuring that $T\leq 1/C M \epsilon_0$), and using {(recall that $\eta(0)$ is the identity)} $    \Vert \partial_k \eta^3 \Vert_{L^{\infty}}    +    \Vert \partial_k \eta^3 \Vert_{H^{1}(\Gamma_1)}    \leq    \epsilon_0 $, also by Lemma~\ref{L01}(iv), we get   \begin{align}    \begin{split}    \Vert v^{3}\Vert_{H^{2.5-\nu}(\Gamma_1)}    &\les      \epsilon_0      \Vert v^{3}\Vert_{H^{3-\nu}}    +   \Vert q\Vert_{H^{1}}    +   \Vert \partial_{t} q\Vert_{H^{1}}    +\Vert v\Vert_{H^{2.5+\delta_0}}    + P_0 + \int_{0}^{t} P    \\&    \les    P_0    +    \Vert \partial_{t} q\Vert_{H^{1}}    +    \epsilon_0      \Vert v\Vert_{H^{3-\nu}}    +    \int_{0}^{t}P    .    \end{split}    \label{EQ216}   \end{align} (Note that $\Vert q\Vert_{H^{1}}\les P_0+\int_{0}^{t}P$ and $\Vert v\Vert_{H^{2.5+\delta_0}}\les \epsilon_0 \Vert v\Vert_{H^{3-\nu}} +P_0+\int_{0}^{t}P$.) \gsdfgdsfgdsfg \gsdfgdsfgdsfg \gsdfgdsfgdsfg \gsdfgdsfgdsfg \gsdfgdsfgdsfg Combining \eqref{EQ216} with \eqref{EQ80b} and setting $\epsilon_0>0$ sufficiently small, we obtain   \begin{align}    \begin{split}    F    &=    \Vert v\Vert_{H^{3-\nu}}    \les    P_0    +    \Vert \partial_{t} q\Vert_{H^{1}}    +    \int_{0}^{t} P    \les    H    +    P_0    +    \int_{0}^{t} P    ,    \end{split}    \label{EQ35}   \end{align} while by \eqref{EQ140} we have   \begin{align}    \begin{split}    E    &\les    E\int_{0}^{t}P    +       \left(P_0+\int_{0}^{t}P\right)    \biggl(      G^{3(1+2\delta_0)/2(3-\nu)}      +       H^{3/4}     +     1    \biggr)    .    \end{split}    \label{EQ142}   \end{align} Also, \eqref{EQ127} reads   \begin{align}     \begin{split}    G    &     \les        \left(         P_0         + \int_{0}^{t} P        \right)           E^{(3-\nu)/3}    .   \end{split}    \label{EQ139}   \end{align} Substituting \eqref{EQ139} in \eqref{EQ142} and using Young's inequality then yields   \begin{align}    \begin{split}    E    &\les    E\int_{0}^{t}P    +       \left(P_0+\int_{0}^{t}P\right)    \biggl(      H^{3/4}     +     1    \biggr)    .    \end{split}    \label{EQ194}   \end{align} Next, we have \eqref{EQ138}, which is   \begin{align}   \begin{split}    H &     \les      E      +      \left(P_0+\int_{0}^{t}P\right)      \Bigl(      E^{(7+2\nu)/6}      +      E^{1/2}      G^{(1+\nu+2\delta_0)/(3-\nu)}      +        F^{\nu/(1-\nu)}        E^{(1-2\nu)/(2-2\nu)}      \Bigr)    .    \llabel{NO Ta06 u2sY GlmspV l2 x y0X B37 xEQ141}   \end{split}   \end{align} The inequality  \eqref{EQ139} then gives   \begin{align}   \begin{split}    H &     \les      E      +      \left(P_0+\int_{0}^{t}P\right)      \Bigl(      E^{(7+2\nu)/6}      +      E^{(5+2\nu+4\delta_0)/6}      +        F^{\nu/(1-\nu)}        E^{(1-2\nu)/(2-2\nu)}      \Bigr)     \\&     \les      E      +      \left(P_0+\int_{0}^{t}P\right)      \Bigl(      E^{(7+2\nu)/6}      +        F^{\nu/(1-\nu)}        E^{(1-2\nu)/(2-2\nu)}      \Bigr)    \label{EQ195}   \end{split}   \end{align} where we used Young's inequality  and $(5+2\nu+4\delta_0)/6<1$ in the last step. Replacing \eqref{EQ35} into \eqref{EQ195}, we get   \begin{align}     \begin{split}      H      &\les      E      +      \left(P_0+\int_{0}^{t}P\right)      \Bigl(      E^{(7+2\nu)/6}      +        H^{\nu/(1-\nu)}        E^{(1-2\nu)/(2-2\nu)}      \Bigr)     \end{split}    \llabel{43 k5 kaoZ deyE sDglRF Xi 9 6b6 w9EQ25}    \end{align} from where, using Young's inequality to absorb $H^{\nu/(1-\nu)}$ into the left side (note that $\nu/(1-\nu)<1$ by the restriction on $\nu$), we get   \begin{align}      \begin{split}       H       &\les      E      +      \left(P_0+\int_{0}^{t}P\right)      \Bigl(      E^{(7+2\nu)/6}      +      E^{1/2}      \Bigr)      \les      E      +      \left(P_0+\int_{0}^{t}P\right)       E^{(7+2\nu)/6}     .      \end{split}      \label{EQ24}     \end{align} We need to combine this inequality with \eqref{EQ194}. Observe that   \begin{equation}    \frac{7+2\nu}{6} \frac{3}{4} < 1,    \label{EQ197}   \end{equation} which follows from $0\leq \nu<1/2$. Thus, we may choose $   \tilde\epsilon>0$ such that   \begin{equation}     \frac{7+2\nu}{6}     \frac{3}{4}     (1+\tilde\epsilon)     <1    \label{EQ196}   \end{equation} Then replacing  \eqref{EQ24}  in  \eqref{EQ194}, we get   \begin{align}      \begin{split}      E       &\les      E\int_{0}^{t}P      +      \left(P_0+\int_{0}^{t}P\right)      \Bigl(      E^{3/4}      +      E^{3(7+2\nu)/24}      \Bigr)      \les      E\int_{0}^{t}P      +      \left(P_0+\int_{0}^{t}P\right)      \Bigl(        E^{3/4}        + E^{1/(1+\tilde\epsilon)}      \Bigr)      \end{split}    \label{EQ26}     \end{align} where we used \eqref{EQ196} in the last step. Using Young's inequality on \eqref{EQ26}, we get   \begin{align}      \begin{split}      E       &\les      P_0      +      E\int_{0}^{t}P      \end{split}    \label{EQ33}     \end{align} Note that $P$ here and below depends on $E$, $F$, $G$, and $H$, i.e., $P=P(E,F,G,H)$. The inequality  \eqref{EQ33} is combined with \eqref{EQ24}, i.e.,     \begin{align}      \begin{split}       H      \les      \left(P_0+\int_{0}^{t}P\right)       E^{(7+2\nu)/6}      .      \end{split}    \label{EQ27}     \end{align} In addition, we have an inequality for $F$, which is \eqref{EQ35} with \eqref{EQ27} applied to it,    \begin{align}      \begin{split}       F       \les       \left(P_0+\int_{0}^{t}P\right)       E^{(7+2\nu)/6}    .      \end{split}    \label{EQ29}    \end{align} Finally, by \eqref{EQ139}, we have   \begin{align}     \begin{split}    G    &     \les        \left(         P_0         + \int_{0}^{t} P        \right)           E^{(3-\nu)/3}    .   \end{split}    \label{EQ31}   \end{align} A barrier technique applied to \eqref{EQ33}--\eqref{EQ31} then  leads to the boundedness of $E$, $F$, $G$, and $H$ for a sufficiently small $T>0$ and the proof is concluded. \end{proof} \gsdfgdsfgdsfg \gsdfgdsfgdsfg \gsdfgdsfgdsfg \gsdfgdsfgdsfg \gsdfgdsfgdsfg \gsdfgdsfgdsfg \gsdfgdsfgdsfg \startnewsection{The case of a general domain}{sec12} \def\OP{\tilde{\Omega}} \def\intt{\int_{\OP}} In this section, we show how to adapt the ideas used to prove Theorem \ref{T01},  where the initial surface was flat,  to the case of a general bounded domain. The physical situation which we have  in mind is that of a water droplet with surface tension. In this case the fluid domain does not have a rigid bottom, and thus only equations \eqref{ZZ13}--\eqref{ZZ16} are considered. Note however that the presence of a rigid bottom can also be handled with  minor modifications. If $U$ is a domain in $\mathbb{R}^3$, $\Vert \partial  U \Vert_{s}$ is the $H^s$ norm of the boundary of the domain, defined in the usual way via local representations as graphs. \gsdfgdsfgdsfg \cole \begin{Theorem} \label{T02} Let $\sigma >0$ and $\epsilon\in[0,1/2)$. Assume that $v_0$ is a smooth divergence-free vector field on  a bounded domain $\Omega \subset \mathbb{R}^3$ with smooth boundary $\Gamma$, and denote by $N$ the unit outer normal to $\Gamma$. Then there exist $C_*>0$  and $T_{*}>0$, depending only on $\Vert v_0\Vert_{H^{2.5+\epsilon}}$, $\Vert v_0 \cdot N \Vert_{H^{2.5}(\Gamma)}$, $\sigma>0$, and $\Vert\Gamma\Vert_{H^{3.75+\epsilon/2}}$, such that any smooth solution $(v,q)$ to  \eqref{ZZ13}--\eqref{ZZ16} with the initial condition $v_0$ and defined on the time interval $[0,T_*]$ satisfies   \begin{align}    \begin{split}     \Vert v\Vert_{H^{2.5+\epsilon}}     + \Vert \partial_{t}v\Vert_{H^{1.5}}     + \Vert \partial_{t}^2v\Vert_{L^2}     + \Vert q\Vert_{H^{2.25+\epsilon/2}}     + \Vert \partial_{t}q\Vert_{H^{1}}     \leq C_*.    \end{split}    \label{EQ12}   \end{align}   Moreover,    $\Vert \Gamma(t)\Vert_{H^{3 +\epsilon}} \leq C_*$   for $t \in [0,T_*]$, where $\Gamma(t) = \eta(t)(\Gamma)$. \end{Theorem}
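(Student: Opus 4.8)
The plan is to reduce Theorem~\ref{T02} to Theorem~\ref{T01} by localization, redoing each estimate of Sections~\ref{sec03}--\ref{sec11} in coordinate patches and summing. Fix a finite open cover $\{U_k\}$ of $\overline\Om$ with a subordinate partition of unity $\{\chi_k\}$ normalized by $\sum_k\chi_k^2=1$; for each boundary patch ($U_k\cap\Gamma\neq\emptyset$) fix a time-independent diffeomorphism $\Phi_k$ flattening $\Gamma\cap U_k$ onto a subset of a hyperplane $\{y_3=\mathrm{const}\}$, so that $(y_1,y_2)$ are tangential. The maps $\Phi_k,\Phi_k^{-1}$ and the cut-offs $\chi_k$ are as regular as $\Gamma$; in particular every derivative of them that occurs below is controlled by $\|\Gamma\|_{H^{3.75+\epsilon/2}}=\|\Gamma\|_{H^{4-\nu/2}}$, which is why this norm enters $C_*$ and $T_*$. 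Pushing \eqref{ZZ13}--\eqref{ZZ16} forward by $\Phi_k$, one gets in the $y$-chart a system of the same structure: the cofactor matrix $a$, the divergence-free condition, the Poisson problems for $q$ and $\partial_tq$, and the surface-tension relation all keep their form, the Euclidean metric being merely replaced by a fixed smooth metric built from $D\Phi_k$. Consequently Lemmas~\ref{L04}, \ref{L12}, \ref{L14}, the tangential estimate of Lemma~\ref{L05}, the $L^2$ estimate of Lemma~\ref{L10}, the div-curl estimate of Lemma~\ref{L09}, and the relation of Section~\ref{sec10} all have localized analogues, with the new chart coefficients handled exactly like the factors $\sqrtg g^{ij}$ and $\Pi_\lambda^\alpha$ already present there.

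In interior patches no boundary terms arise and the energy estimates go through for $\chi_kv$ in place of $v$; the only new contributions are commutators of $\chi_k$ (and its derivatives) with spatial and time derivatives, which are of strictly lower order and are absorbed into $\epsilon_0(\|v\|_{H^{2.5-\nu/2}}^2+\|\partial_tv\|_{H^{1.5}}^2)+P_0+\int_0^tP$. In boundary patches one replaces the global multiplier $\tpar=(I-\Delta_2)^{1/2}$ by $\tpar_k=(I-\Delta_{y_1,y_2})^{1/2}$ acting in the flattened tangential variables, runs the estimate of Section~\ref{sec07} on $\tpar_k^{1-\nu/2}(\chi_k\partial_tv)$ and the estimate of Section~\ref{sec08} on $\partial_t^2(\chi_kv)$, and commutes $\tpar_k^{1-\nu/2}$ and $\chi_k$ across the equations. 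Besides the terms already controlled in Lemmas~\ref{L05} and~\ref{L10}, this produces commutators $[\tpar_k^{1-\nu/2},\chi_k]$ and $[\tpar_k^{1-\nu/2},\ \text{geometric/chart coefficient}]$; by the Kato--Ponce and fractional commutator inequalities used throughout (of the type \eqref{EQ213}), each gains a derivative and so is bounded, after time integration, by the right-hand sides already present. The coercive boundary term $-\frac12\frac{d}{dt}\int_{\Gamma}\sqrt{g}g^{ij}\Pi_\lambda^\mu\partial_j\EE v^\lambda\,\Pi_\mu^\alpha\partial_i\EE v_\alpha$ acquires a chart-dependent but uniformly positive-definite weight, so \eqref{EQ227} persists and still delivers control of $\|\Pi\barpar\EE(\chi_kv)\|_{L^2(\Gamma)}$. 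Summing over $k$ and using $\sum_k\chi_k^2=1$ recovers the global energies $E_0,E_1,E,F,G,H$ of \eqref{EQ13}--\eqref{EQ00}, now with implied constants depending additionally on $\|\Gamma\|_{H^{4-\nu/2}}$.

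With the localized analogues of \eqref{EQ135}, \eqref{EQ103}, \eqref{EQ83a}, \eqref{EQ216} and of Lemmas~\ref{L04} and~\ref{L09} available, the closing argument is word-for-word that of Theorem~\ref{T01}: one obtains \eqref{EQ133}, \eqref{EQ126}, \eqref{EQ140}, then \eqref{EQ194}, \eqref{EQ24}, \eqref{EQ35}, and finally \eqref{EQ33}, \eqref{EQ27}, \eqref{EQ29}, \eqref{EQ31}, after which a barrier argument bounds $E,F,G,H$ on a short interval $[0,T_*]$ with $T_*,C_*$ depending only on the stated quantities, giving \eqref{EQ12}. The bound $\|\Gamma(t)\|_{H^{3+\epsilon}}\le C_*$ is obtained by reading \eqref{ZZ16} (in the form analogous to \eqref{ZZ21}) as an elliptic equation for the parametrization $\eta|_{\Gamma_1(t)}$, whose right-hand side is controlled by the trace of $q\in H^{2.25+\epsilon/2}$ just established; bootstrapping from the a priori regularity $\eta|_\Gamma\in H^{2.5-\nu}(\Gamma)$ (from $v\in H^{3-\nu}$ and Lemma~\ref{L01}(i)) upgrades the surface to $H^{3+\epsilon}$, exactly as the time-differentiated relation was used in \eqref{EQ216} to place $v^3$ in $H^{2.5-\nu}(\Gamma_1)$.

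The main obstacle is the interaction of the fractional tangential operators $\tpar_k^{1-\nu/2}$ with the localization data. In the flat periodic model $\tpar$ is a clean Fourier multiplier commuting with $\partial_t$ and $\partial_\alpha$, whereas in a chart it commutes neither with multiplication by $\chi_k$ nor with the variable geometric coefficients; the commutator bookkeeping must be carried out so that every extra term lands at a regularity where it can be absorbed into the coercive left-hand side or into $\int_0^tP$, with no half-derivative lost when a fractional operator crosses a cut-off near $\Gamma$. This is precisely the difficulty anticipated in Section~\ref{sec01}, and it is the reason the flat case is treated first; once the fractional commutator estimates are set up in local charts, the remaining steps are routine adaptations of Sections~\ref{sec03}--\ref{sec11}.
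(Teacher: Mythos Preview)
Your proposal is correct and follows the same overall strategy as the paper (localize, flatten, insert cut-offs into the tangential multiplier, run the Sections~\ref{sec03}--\ref{sec11} estimates patchwise, sum). Two points where the paper's execution differs from yours are worth recording. First, rather than working with a chart-induced background metric, the paper composes the Lagrangian map with the graph flattener $\Psi(x)=(x^1,x^2,x^3+\psi(x^1,x^2))$ and sets $\tilde\eta=\eta\circ\Psi$; since $\det D\Psi=1$, the equations for $(\tilde v,\tilde q,\tilde a)$ are \emph{literally} \eqref{ZZ13}--\eqref{ZZ16}, so no ``fixed smooth metric built from $D\Phi_k$'' ever appears and Sections~\ref{sec03}--\ref{sec11} apply verbatim once $\EE=\tpar^{1-\nu/2}(\theta\,\cdot\,)$ is used. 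The paper also arranges $|\nabla\psi|$ small by shrinking the patch radius (using $\nabla\psi=0$ at the center and the mean value theorem), which is what makes the time-zero values $a(0),g_{ij}(0)$ close to the identity and hence keeps the Lemma~\ref{L01}(iv)-type absorptions (div-curl, $\delta^{ij}-\sqrt{g}g^{ij}$, etc.) intact; your sketch does not isolate this step. Second, for $\Vert\Gamma(t)\Vert_{H^{3+\epsilon}}$ the paper does not read \eqref{ZZ16} as an elliptic equation for $\eta|_\Gamma$; instead it passes to an \emph{Eulerian} graph $y^3=h(y^1,y^2)$ of the moving surface, uses $\Vert\cH\Vert_{H^{1+\epsilon}}\le C_*$ (from \eqref{ZZ16} and \eqref{EQ12}) as data for the quasilinear mean-curvature equation $A^{ij}(\nabla w)\partial_i\partial_j w=\cH\circ h$, and bootstraps carefully (Schauder with $C^{0,\beta}$ coefficients, then $L^p$, then interpolation in Sobolev scales). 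Your Lagrangian route via \eqref{ZZ16} would have to confront the same quasilinear bootstrap (the Laplace--Beltrami coefficients depend on $\partial\eta$) at the same low regularity, so the Eulerian graph formulation is the cleaner choice here.
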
 \colb \gsdfgdsfgdsfg As in Theorem \ref{T01}, the dependence of $C_*$ and $T_*$ on $\Vert v_0 \cdot N \Vert_{H^{2.5}(\Gamma)}$ occurs to guarantee that  $\partial_t^2 v$ belongs to $L^2$ at time zero. More precisely, solving for $\partial_t^2 v(0)$ in terms of $v(0)$ and $q(0)$ as in Remark \ref{R01},  we can bound $\partial_t^2 v(0)$ in $L^2$ in terms of the initial data if $v_0 \cdot N \in H^{2.5}(\Gamma)$. However, instead of solving for time-differentiated quantities in terms of the initial data to determine regularity conditions on the latter, many times it is preferable to directly state the a priori estimate upon the assumption that the energy we seek to bound is finite at time zero, as done for example in \cite{CoutandShkollerFreeBoundary}. Therefore, introducing  \begin{gather} N(t) =     \Vert v(t)\Vert_{H^{2.5+\epsilon}}     + \Vert \partial_{t}v(t)\Vert_{H^{1.5}}     + \Vert \partial_{t}^2v(t)\Vert_{L^2}     + \Vert q(t)\Vert_{H^{2.25+\epsilon/2}}     + \Vert \partial_{t}q(t)\Vert_{H^{1}}, \nonumber \end{gather} we have the following. \gsdfgdsfgdsfg \cole \begin{Theorem} \label{T03} Let $\sigma >0$ and $\epsilon\in[0,1/2)$. Assume that $v_0$ is a smooth divergence-free vector field on  a bounded domain $\Omega \subset \mathbb{R}^3$ with smooth boundary $\Gamma$. Then there exist $C_*>0$  and $T_{*}>0$ depending only on $N(0)$,  $\sigma>0$, such that  any smooth solution $(v,q)$ to  \eqref{ZZ13}--\eqref{ZZ16} with the initial condition $v_0$ and defined on the time interval $[0,T_*]$ satisfies $N(t) \leq C_*$. \end{Theorem}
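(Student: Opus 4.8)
The plan is to establish Theorem~\ref{T03} as a reformulation of Theorem~\ref{T02}, and to prove Theorem~\ref{T02} by \emph{localizing} the argument that produced Theorem~\ref{T01}. First I would cover the initial boundary $\Gamma$ by finitely many coordinate charts $\Phi_k$ in each of which a neighbourhood of $\Gamma$ in $\Omega$ is flattened to a slab $\omega\times[0,1]$, with $\Gamma$ corresponding to $\omega\times\{1\}$, and fix a subordinate partition of unity $\{\chi_k\}$ together with slightly larger cut-offs $\tilde\chi_k\equiv1$ on $\supp\chi_k$. Interior pieces (away from $\Gamma$) are handled by standard interior elliptic and energy estimates with no boundary contribution. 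In a boundary chart, the Lagrangian system \eqref{ZZ13}--\eqref{ZZ16} takes exactly the form treated in Sections~\ref{sec03}--\ref{sec11}, except that the constant coefficients of the channel case are replaced by the $O(1)$ smooth chart coefficients $\sqrt{\det G}\,G^{ij}$ and $\sqrt{\det G}\,G^{ij}\Gamma^k_{ij}$ of the pulled-back Euclidean metric $G$; these are controlled in the relevant Sobolev norms by $\Vert\Gamma\Vert_{H^{3.75+\epsilon/2}}$, and the smallness actually used in the estimates still comes, as in Lemma~\ref{L01}(iv), from $\Vert\nabla\eta-I\Vert$ being small for small $T$ (in chart coordinates $\eta(0)$ is the identity of $\omega\times[0,1]$, so $\nabla\eta(0)=I$ and $\partial_t\eta$ is the chart push-forward of $v$).

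The bulk of the work is then to re-derive, chart by chart and with $v$ replaced by $\chi_k v$ (and by the corresponding localized, tangentially differentiated quantities $\chi_k\tpar^{1-\nu/2}v$, $\chi_k\partial_t v$, etc.), the whole chain of estimates: the pressure bounds of Lemma~\ref{L04}, the cofactor-type cancellation of Lemma~\ref{L12}, the boundary-integral identity of Lemma~\ref{L14}, the tangential estimate of Lemma~\ref{L05}, the $L^2$ estimate on $\partial_t^2 v$ of Lemma~\ref{L10}, the div-curl estimates of Lemma~\ref{L09}, and the projection/normal-component comparison of Section~\ref{sec10}. Each localization generates commutator terms $[\tpar^{1-\nu/2},\chi_k]$, $[\tpar^{1-\nu/2},\sqrt{\det G}\,G^{ij}]$, and $[\Delta_g,\chi_k]$ acting on $v$; I would show, via the Kato--Ponce fractional product and commutator inequalities already used in the flat case (cf.\ \eqref{EQ213} and the bounds following \eqref{EQ19}), that each such commutator is bounded by $\Vert v\Vert_{H^{2.5+\delta_0}}$ times lower-order factors, hence lands in the harmless $\int_0^t P$ term. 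Summing the localized estimates over $k$ and adding the interior estimate reproduces, for a general domain, the differential inequalities \eqref{EQ33}--\eqref{EQ31} for $E,F,G,H$, now with $P_0$ a polynomial in $N(0)$ and $\Vert\Gamma\Vert_{H^{3.75+\epsilon/2}}$; the barrier argument of Section~\ref{sec11} then yields \eqref{EQ12}. The bound on $\Vert\Gamma(t)\Vert_{H^{3+\epsilon}}$ follows since $\Gamma(t)=\eta(t)(\Gamma)$, $\partial_t\eta=v$, and $\Vert\eta(t)\Vert_{H^{3-\nu}}$ is controlled as in Lemma~\ref{L01}(i).

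Granting Theorem~\ref{T02}, Theorem~\ref{T03} is obtained by trading the explicit dependence on $\Vert v_0\cdot N\Vert_{H^{2.5}(\Gamma)}$ and $\Vert\Gamma\Vert_{H^{3.75+\epsilon/2}}$ for dependence on $N(0)$. Since $\eta(0)=\id$, the boundary condition \eqref{ZZ16} at $t=0$ reads $q(0)=\sigma\mathcal{H}$ on $\Gamma$, so the trace bound gives $\mathcal{H}\in H^{1.75+\epsilon/2}(\Gamma)$ with norm controlled by $\sigma^{-1}\Vert q(0)\Vert_{H^{2.25+\epsilon/2}}\le\sigma^{-1}N(0)$; inverting the (quasilinear) mean-curvature equation then bounds $\Vert\Gamma\Vert_{H^{3.75+\epsilon/2}}$ by a function of $N(0)$ and $\sigma$. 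Likewise $\Vert v_0\Vert_{H^{2.5+\epsilon}}\le N(0)$ and $\Vert\partial_t^2 v(0)\Vert_{L^2}\le N(0)$ directly, and running the time-zero computation of Remark~\ref{R01} in reverse (using that $\partial_t q(0)\in H^1$ and $q(0)\in H^{2.25+\epsilon/2}$ are part of $N(0)$, together with the general-domain analogue of the identity \eqref{EQ173}) bounds $\Vert v_0\cdot N\Vert_{H^{2.5}(\Gamma)}$, equivalently $\Vert\Delta_\Gamma(v_0\cdot N)\Vert_{H^{0.5}(\Gamma)}$, in terms of $N(0)$ and $\sigma$. Substituting these into the constants of Theorem~\ref{T02} makes $C_*$ and $T_*$ depend only on $N(0)$ and $\sigma$, which is the assertion of Theorem~\ref{T03}.

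I expect the main obstacle to be precisely the one anticipated in the introduction: controlling the commutators between the nonlocal fractional tangential operator $\tpar^{1-\nu/2}$ (and the fractional boundary operators entering the $H^{2.5-\nu}(\Gamma)$ estimates) and both the cut-off functions $\chi_k$ and the curved chart coefficients, while still preserving the two structural cancellations on which the flat-case estimates rely --- the cofactor cancellation of Lemma~\ref{L12} and the sign-definite leading boundary term produced in Lemma~\ref{L14}. Because $\tpar^{1-\nu/2}$ is nonlocal, multiplying by $\chi_k$ does not localize it up to a compactly supported error of the same order, so one must carry the full commutator and verify it is genuinely of lower order even against the variable, curved geometry of the charts; arranging this bookkeeping so that the resulting error terms never compete with the coercive boundary terms kept on the left-hand side of \eqref{ZZ24} and \eqref{EQ23} is the technically delicate part of the argument.
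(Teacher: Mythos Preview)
Your overall plan---localize with cut-offs, re-derive the chain of estimates from Sections~\ref{sec03}--\ref{sec11} chart by chart, control commutators with the fractional tangential operator, then sum and close via the barrier argument---is exactly the paper's strategy, and your identification of the commutators $[\tpar^{1-\nu/2},\theta]$ as the delicate point is correct. However, your description of the localized equations differs from the paper's in an important way. The paper does \emph{not} pull back the Euclidean metric and carry curved coefficients $\sqrt{\det G}\,G^{ij}$ through the equations. Instead it sets $\tilde\eta=\eta\circ\Psi$ (with $\Psi$ the boundary-flattening chart) and observes that $\tilde v=u\circ\tilde\eta$, $\tilde q=p\circ\tilde\eta$ satisfy \eqref{ZZ13}--\eqref{ZZ16} in \emph{exactly} the same form, with no extra metric factors. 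The price is that $\tilde\eta(0)=\Psi$, not the identity, so $a(0)$, $g_{ij}(0)$, etc.\ involve $\nabla\psi$; the paper then obtains the required smallness (e.g.\ $a(0)-I$ small, $\sqrt{g}g^{ij}-\delta^{ij}$ small, needed for coercivity in \eqref{EQ227} and for the boundary elliptic estimate on $v^3$) not only from $T$ small but also by shrinking the chart radius so that $\Vert\nabla\psi\Vert_{L^\infty}$ is small. Your statement that ``$\nabla\eta(0)=I$ in chart coordinates'' is thus inconsistent with the paper's setup; if you genuinely want $\eta(0)=\mathrm{id}$ you must also change coordinates on the target and carry the metric $G$ through every estimate, which is a workable but heavier route than the paper's.

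Two further remarks. First, your argument for $\Vert\Gamma(t)\Vert_{H^{3+\epsilon}}$ via $\Vert\eta\Vert_{H^{3-\nu}}$ does not work: the trace of $\eta$ on $\Gamma$ is only $H^{2+\epsilon}(\Gamma)$, and the paper explicitly notes that $\Gamma(t)$ is more regular than $\eta|_\Gamma$. The paper instead uses the boundary condition \eqref{ZZ16} to extract $\cH\in H^{1+\epsilon}(\Gamma(t))$ from the already-established bound on $q$, and then runs a careful elliptic bootstrap on the quasilinear graph equation $A^{ij}(\nabla w)\partial_i\partial_j w=\cH$ (Schauder to $C^{2,\beta}$, then $L^p$ to $H^3$, then interpolation to $H^{3+\epsilon}$). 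This is only needed for Theorem~\ref{T02}, not for Theorem~\ref{T03}. Second, your reduction of Theorem~\ref{T03} to Theorem~\ref{T02} by bounding $\Vert\Gamma\Vert_{H^{3.75+\epsilon/2}}$ in terms of $N(0)$ via $q(0)|_\Gamma=\sigma\cH$ is reasonable and is essentially the same elliptic bootstrap just described, applied at $t=0$; the paper does not spell out this reduction separately but treats the two theorems together, with the chart-size parameter $R_0$ implicitly controlled by the $H^{2+\delta}$ regularity of $\Gamma$.
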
 \colb \gsdfgdsfgdsfg We remark that Theorem \ref{T02} entails some derivative loss for the boundary, i.e., a $H^{3.75+\epsilon/2}$ initial boundary  $\Gamma$ yields only a $H^{3+\epsilon}$ moving boundary $\Gamma(t)$. This loss of regularity is known to be prevented in $H^s$ for $s \geq 4$ \cite{CoutandShkollerFreeBoundary, ShatahZengGeometry}. It seems challenging, however, to avoid some loss of derivatives  for the boundary evolution when working in such low regularity spaces as presented here. It should be stressed, however, that some regularity of the boundary is propagated, namely, $\Gamma(t)$ is in $H^{3+\epsilon}$, thus more regular than the flow $\eta|_{\Gamma}$ which is guaranteed to be only in $H^{2 + \epsilon}(\Gamma)$. \gsdfgdsfgdsfg We now turn to the proof of Theorems \ref{T02} and \ref{T03}.  The crucial observation is that in appropriate coordinates that flatten the boundary near a point, the equations take exactly the same form as \eqref{ZZ13}--\eqref{ZZ16}, with $\partial_i$, for $i=1,2$, being tangent to the boundary, as in the case of the domain \eqref{EQ181}. \gsdfgdsfgdsfg More precisely,  given $y_0 \in \partial \Omega$, we take coordinates that flatten the boundary near $y_0$. This means that there exist $r,R>0$  and a diffeomorphism $\Psi \colon B_R(0,0,1) \cap \{ x^3 \leq 1 \} \to B_r(y_0) \cap \Omega$ such that (after a rigid motion and relabeling the coordinates if necessary) we have $\Psi(x^1,x^2,x^3) = (x^1,x^2, x^3 + \psi(x^1,x^2))$, where $\psi\colon B_R(0) \cap \{ x^3 = 1 \} \to \mathbb{R}$ is a smooth function. Note that $\det D\Psi = \det D \Psi^{-1} = 1$. Consider the Lagrangian map $\eta\colon \Omega  \to \Omega(t)$, and set $\tilde{\eta} = \eta \circ \Psi$, which is defined in the domain of $\Psi$. Then $\partial_t \tilde{\eta} = \partial_t \eta \circ \Psi = u \circ \eta \circ \Psi = u \circ \tilde{\eta}$, where $u$ is the Eulerian velocity, i.e., the velocity in the moving domain $\Omega(t)$. It follows that if we introduce $\tilde{v} = u \circ \tilde{\eta}$ and  $\tilde{q}= p \circ \tilde{\eta}$, where $p$ is the Eulerian pressure, then $\tilde{v}$ and $\tilde{q}$ satisfy equations \eqref{ZZ13}--\eqref{ZZ16} with all variables replaced by their respective $\,\tilde{\,}\,$ counter-parts -- except that these equations are now defined  only locally, i.e., in $B_R(0) \cap \{ x^3 \leq 1 \}$. We thus  use suitably chosen cut-off functions to produce local estimates, passing to a global estimate by a simple addition procedure.  In order to simplify the exposition, we will omit tildes from all quantities and continue to label $\eta$, $v$, and $q$, which are  only locally defined, the Lagrangian map, velocity, and pressure, respectively. \gsdfgdsfgdsfg We need expressions for  $\eta(0)$, $a(0)$, and $g_{ij}(0)$, which now are slightly more complicated than in the case of the domain \eqref{EQ181}. We have \begin{align} \begin{split} &\eta(0,x) = (x^1, x^2, x^3 + \psi(x^1,x^2)), \, \, \partial_i \eta^\mu(0) = \delta_i^\mu + \delta^{\mu 3} \partial_i \psi, \, \, g_{ij}(0) = \delta_{ij} + \partial_i \psi \partial_j \psi,\\ &\text{and } \, g(0) = 1 + (\partial_1 \psi)^2 + (\partial_2 \psi)^2, \end{split} \nonumber \end{align} where we recall that $g$ is the determinant of $(g_{ij})$. Also,  \begin{align} \begin{split} g^{-1}(0) = \frac{1}{ 1 + (\partial_1 \psi)^2 + (\partial_2 \psi)^2} \begin{bmatrix} 1 + (\partial_2 \psi)^2 & -\partial_1 \psi \partial_2 \psi \\ -\partial_1 \psi \partial_2 \psi & 1 + (\partial_1 \psi)^2 \end{bmatrix}, a(0)=  \begin{bmatrix} 1 & 0 & 0 \\ 0 & 1 & 0 \\ -\partial_1 \psi & -\partial_2 \psi & 1 \end{bmatrix}. \end{split} \nonumber \end{align} \gsdfgdsfgdsfg In the proof of Theorem \ref{T01}, for which $\psi \equiv 0$,  we used the above quantities at time zero to produce some small parameters in the energy estimates. In order to apply the same argument here, we need $\nabla \psi$ to be small. This can be achieved as follows. Without loss of generality we may assume that $\nabla \psi(0,0,1) = 0$. Reducing $R$ and invoking the mean value theorem, we may make $\Vert \nabla \psi \Vert_{L^\infty(\Gamma)}$ as small as we wish provided that  $\psi$ is bounded in $H^{2+\delta}$, where $\delta > 0$, which is consistent with Theorem \ref{T02}. Note that the compactness of $\Gamma$ assures that we may take  $R\geq R_0$ for some fixed $R_0$. \gsdfgdsfgdsfg \gsdfgdsfgdsfg \gsdfgdsfgdsfg \gsdfgdsfgdsfg \gsdfgdsfgdsfg \gsdfgdsfgdsfg \gsdfgdsfgdsfg We shall derive estimates near the point $(0,0,1)$, with the variables defined in the ball of radius $R/2$, where $R>0$ is as introduced above in the construction of the local parameterization of $\Omega$. Let $\theta$ be a smooth cut-off function such that $0\leq\theta\leq1$ with $\theta\equiv 1$ on $\bar B_{R/5}(0,0,1)$ and  $\supp\theta\subseteq B_{R/4}(0,0,1)$. In what follows, all integrands carry a cut-off function of this type. Therefore, extending  all quantities to be identically zero outside $ B_{R/4}(0,0,1)$, we may consider the equations and variables defined on the domain $\OP =\mathbb{T}^{2} \times [0,1]$. This will make it easier to adapt the estimates from Section \ref{sec07}. Also, as in that section, we shall denote the upper boundary of $\OP$ by $\Gamma_1$ and the lower 
boundary by $\Gamma_0$. However, unlike Section \ref{sec07}, no integral over $\Gamma_0$ is present since all variables vanish there in view of the way they have been extended. \gsdfgdsfgdsfg We now apply the energy estimates of Section \ref{sec07} with ,   \begin{equation}    \EE    = \tpar^{1-\nu/2}(\theta \,\cdot\,).    \label{EQ199}   \end{equation}    obtaining    \begin{align}    \begin{split}    \frac12    \frac{d}{dt}    \Vert\EE\partial_{t}v\Vert_{L^2}^2    &=    - \intt\EE\partial_{t}(a^{\mu\alpha}\partial_{\mu}q)\EE \partial_{t}v_{\alpha}    =    -    \intt    \tpar^{1-\nu/2} (\theta \partial_{t}(a^{\mu\alpha}\partial_{\mu}q))    \tpar^{1-\nu/2}(\theta\partial_{t} v_{\alpha})    \\&    =    -    \intt    \theta \partial_{t}(a^{\mu\alpha}\partial_{\mu}q)    \tpar^{2-\nu}(\theta\partial_{t} v_{\alpha})    \\&    =    \intt    \theta \partial_{t}(a^{\mu\alpha}q)    \tpar^{2-\nu}(\theta\partial_{t} \partial_{\mu}v_{\alpha})    -    \intu    \theta \partial_{t}( N_{\mu} a^{\mu\alpha}q)    \tpar^{2-\nu}(\theta\partial_{t}v_{\alpha})    \\&\indeq    +       \intt    \partial_{\mu}\theta \partial_{t}(a^{\mu\alpha}q)    \tpar^{2-\nu}(\theta\partial_{t} v_{\alpha})    +       \intt    \theta \partial_{t}(a^{\mu\alpha}q)    \tpar^{2-\nu}( \partial_{\mu}\theta\partial_{t} v_{\alpha})    ,    \end{split}    \llabel{B dId Ko gSUM NLLb CRzeQL UZ m i9OEW88a}   \end{align} from where   \begin{align}    \begin{split}    \frac12    \frac{d}{dt}    \Vert\EE\partial_{t}v\Vert_{L^2}^2    &=      \intt\EE\partial_{t}(a^{\mu\alpha}q)\EE \partial_{t}\partial_{\mu}v_{\alpha}    - \intu\EE\partial_{t}(N_{\mu}a^{\mu\alpha}q)\EE \partial_{t}v_{\alpha}    \\&\indeq    +       \intt    \tpar^{0.5-\nu}(    \partial_{\mu}\theta \partial_{t}(a^{\mu\alpha}q))    \tpar^{1.5}(\theta\partial_{t} v_{\alpha})    +       \intt    \tpar^{0.5-\nu}(    \theta \partial_{t}(a^{\mu\alpha}q))    \tpar^{1.5}(\partial_{\mu}\theta\partial_{t} v_{\alpha})    .    \end{split}    \label{EW88}   \end{align} By \eqref{EW88}, we have $    \frac12    \frac{d}{dt}    \Vert\tpar^{1-\nu/2}(\theta\partial_{t}v)\Vert_{L^2}^2    = I_1 + I_2 + I_3 + I_4 + I_5 $, where   \begin{align}    \begin{split}    I_1    &=    \intt\tpar^{1-\nu/2}(\theta\partial_{t}a^{\mu\alpha}q)     \tpar^{1-\nu/2}(\theta \partial_{t}\partial_{\mu}v_{\alpha})    \\    I_2    &=     \intt      a^{\mu\alpha}\tpar^{2-\nu}(\theta\partial_{t}q)\theta\partial_{t}\partial_{\mu}v_{\alpha}    \\    I_3    &=    \intt    \Bigl(      \tpar^{2-\nu}(\theta a^{\mu\alpha}\partial_{t}q)      - a^{\mu\alpha}\tpar^{2-\nu}(\theta\partial_{t}q)    \Bigr)    \theta   \partial_{t}\partial_{\mu}v_{\alpha}    \\    I_4    &=    - \int_{\Gamma_1}          \tpar^{1-\nu/2}(\theta \partial_{t}( N_{\mu}a^{\mu\alpha}q))          \tpar^{1-\nu/2} (\theta\partial_{t} v_{\alpha})    \\    I_5    &=    \intt    \tpar^{0.5-\nu}(    \partial_{\mu}\theta \partial_{t}(a^{\mu\alpha}q))    \tpar^{1.5}(\theta\partial_{t} v_{\alpha})    +       \intt    \tpar^{0.5-\nu}(    \theta \partial_{t}(a^{\mu\alpha}q))    \tpar^{1.5}(\partial_{\mu}\theta\partial_{t} v_{\alpha})    .    \end{split}    \llabel{ 2qv VzD hz v1r6 spSl jwNhG6 s6 i EW91}   \end{align} The first term is rewritten as   \begin{align}    \begin{split}     I_1     &=    \intt\tpar^{1.5-\nu}(\theta\partial_{t}a^{\mu\alpha}q)    \tpar^{0.5} (\theta\partial_{t}\partial_{\mu}v_{\alpha})     \les     \bigl\Vert      \tpar^{1.5-\nu}       (\theta\partial_{t}a q)     \bigr\Vert_{L^2}       \Vert\theta\partial_{t}\nabla v\Vert_{H^{0.5}}    .    \end{split}    \llabel{SdX hob hbp 2u sEdl 95LP AtrBBi bPEQ200}   \end{align} Now, let $\bar\theta$ be a smooth cut-off function such that $0\leq\bar\theta\leq1$ with $    \supp \bar\theta\subseteq B_{R/3}(0,0,1) $ and $    \bar\theta\equiv 1    \inon{on~$\supp \theta$} $. We need this cut-off function for an application of the fractional product rule below, as each separate term needs to be properly cut-off. Having $\bar{\theta} \equiv 1$ on $\supp\theta$ assures that we may introduce $\bar\theta$ without altering  given expressions. We have   \begin{align}    \begin{split}     &     \bigl\Vert      \tpar^{1.5-\nu}       (\theta\partial_{t}a q)     \bigr\Vert_{L^2}     =     \bigl\Vert      \tpar^{1.5-\nu}       (\theta\partial_{t}a  \bar\theta q)     \bigr\Vert_{L^2}     \les     \Vert       \theta       \partial_{t}a     \Vert_{H^{2-\nu}}     \Vert \bar\theta q\Vert_{H^{1}}     +     \Vert         \theta  \partial_{t}a     \Vert_{H^{1}}     \Vert \bar\theta q\Vert_{H^{2-\nu}}     ,    \end{split}    \llabel{ C wSh pFC CUa yz xYS5 78ro f3UwDPEQ204}   \end{align} where we used the fractional product rule. Also,   \begin{align}    \begin{split}     &\Vert\theta\partial_{t}\nabla v\Vert_{H^{0.5}}     \les     \Vert\nabla(\theta\partial_{t}v)\Vert_{H^{0.5}}         +     \Vert\nabla\theta\partial_{t}v\Vert_{H^{0.5}}         \les     \Vert\bar\theta\theta\partial_{t}v\Vert_{H^{1.5}}         +     \Vert\bar \theta\nabla\theta\partial_{t}v\Vert_{H^{0.5}}        \\&\indeq     \les     \Vert \theta\Vert_{H^{1.5+\delta_0}}     \Vert\bar\theta\partial_{t}v\Vert_{H^{1.5}}         +     \Vert \nabla \theta\Vert_{H^{1.5+\delta_0}}     \Vert\bar \theta\partial_{t}v\Vert_{H^{0.5}}         \les     \Vert\bar \theta\partial_{t}v\Vert_{H^{1.5}}        \end{split}    \llabel{ sC I pES HB1 qFP SW 5tt0 I7oz jXuEQ28}   \end{align} Therefore, we get $    I_1    \les     \Vert       \theta       \partial_{t}a     \Vert_{H^{2-\nu}}     \Vert \bar\theta q\Vert_{H^{1}}     \Vert\bar\theta\partial_{t}v\Vert_{H^{1.5}}     +     \Vert         \theta  \partial_{t}a     \Vert_{H^{1}}     \Vert \bar\theta q\Vert_{H^{2-\nu}}     \Vert\bar\theta\partial_{t}v\Vert_{H^{1.5}} $. Next, by  the divergence-free condition \eqref{ZZ14} we have   \begin{align}    \begin{split}    I_2    &=     - \intt\partial_{t}a^{\mu\alpha}\tpar^{2-\nu}(\theta\partial_{t}q)\theta\partial_{\mu}v_{\alpha}       =     -    \intt \tpar^{1-\nu}(\partial_{t}a^{\mu\alpha}\theta\partial_{\mu}v_{\alpha})     \tpar(\theta\partial_{t}q)    \\&    =     -    \intt \tpar^{1-\nu}(\bar\theta\partial_{t}a^{\mu\alpha}\theta\partial_{\mu}v_{\alpha})     \tpar(\theta\partial_{t}q)    \les    \Vert \tpar^{1-\nu}(\bar\theta\partial_{t}a^{\mu\alpha}\theta\partial_{\mu}v_{\alpha})\Vert_{L^2}    \Vert \theta\partial_{t} q\Vert_{H^{1}}    \end{split}    \llabel{n6c z4 c QLB J4M NmI 6F 08S2 Il8C WQ94}   \end{align} and thus, using the fractional chain rule, $    I_2    \les    \Vert \theta\partial_{t}a^{\mu\alpha}\Vert_{H^{1.5}}    \Vert   \theta\partial_{\mu}v_{\alpha}\Vert_{H^{1-\nu}}    \Vert \theta\partial_{t} q\Vert_{H^{1}} $. For $I_3$, we have, as in \eqref{EQ116},   \begin{align}    \begin{split}    I_3    &    =    \intt    \Bigl(      \tpar^{2-\nu}(\bar\theta a^{\mu\alpha}\theta\partial_{t}q)      - \bar\theta a^{\mu\alpha}\tpar^{2-\nu}(\theta\partial_{t}q)    \Bigr)    \theta   \partial_{t}\partial_{\mu}v_{\alpha}    \\&    \les       \Vert \tpar^{2-\nu} (\bar\theta a^{\mu\alpha}\theta\partial_t q)                         -\bar\theta a^{\mu\alpha}\tpar^{2-\nu}(\theta\partial_t q)       \Vert_{L^{3/2}}      \Vert \theta\partial_{t}\partial_{\mu}v\Vert_{L^{3}}    \\&    \les    \Vert \bar\theta a \Vert_{H^{2-\nu}}    \Vert \theta\partial_{t} q \Vert_{H^{1}}    \Vert \theta \partial_{t} \partial_{\mu}v\Vert_{L^{3}}    \les    \Vert \bar\theta a \Vert_{H^{2-\nu}}    \Vert \theta\partial_{t} q \Vert_{H^{1}}    \Vert \bar\theta \partial_{t} v\Vert_{H^{1.5}}    \end{split}    \llabel{0JQYiU lI 1 YkK oiu bVt fG uOeg SlEQ206}   \end{align} where we used   \begin{align}   \begin{split}     \Vert \theta\partial_{t}\partial_{\mu}v\Vert_{L^{3}}         & \leq     \Vert \partial_{\mu}(\theta\partial_{t}v)\Vert_{L^{3}}        +     \Vert \partial_{\mu}\theta\partial_{t}v\Vert_{L^{3}}     =     \Vert \partial_{\mu}(\theta\partial_{t}v)\Vert_{L^{3}}        +     \Vert \bar\theta \partial_{\mu}\theta \partial_{t}v\Vert_{L^{3}}     \\&     \les     \Vert \partial_{\mu}(\theta\partial_{t}v)\Vert_{H^{0.5}}        +     \Vert \bar\theta\partial_{t}v\Vert_{L^{3}}     \les     \Vert \theta\partial_{t}v\Vert_{H^{1.5}}        +     \Vert \bar\theta\partial_{t}v\Vert_{H^{0.5}}     \les     \Vert \bar\theta\partial_{t}v\Vert_{H^{1.5}}      \end{split}    \llabel{lv b4HGn3 bS Z LlX efa eN6 v1 B6m3EQ207}   \end{align} in the last step. \gsdfgdsfgdsfg Before treating the most difficult term $I_4$, we bound the lower order term $I_5$ which is the sum of two terms, denoted by $I_{51}$ and $I_{52}$. For the first one, we write   \begin{align}    \begin{split}    I_{51}    &=      \intt     \tpar^{0.5-\nu}(\partial_{\mu}\theta\partial_{t}(a^{\mu\alpha}q))     \tpar^{1.5}(\theta \partial_{t}v_{\alpha})    \les     \Vert     \tpar^{0.5-\nu}(\partial_{\mu}\theta\partial_{t}(\bar\theta a^{\mu\alpha}\bar\theta q))     \Vert_{L^2}     \Vert     \tpar^{1.5}(\theta \partial_{t}v_{\alpha})     \Vert_{L^2}     \\&     \les     \Vert     \theta     \Vert_{H^{2.5+\delta_0}}     \Vert \bar\theta \partial_{t} a\Vert_{H^{1.5-\nu/2}}         \Vert \bar\theta q\Vert_{H^{0.5-\nu/2}}     \Vert     \bar\theta \partial_{t}v_{\alpha}     \Vert_{H^{1.5}}     \\&\indeq     +     \Vert     \theta     \Vert_{H^{2.5+\delta_0}}     \Vert \bar\theta a\Vert_{H^{1.5-\nu/2}}         \Vert \bar\theta \partial_{t}q\Vert_{H^{0.5-\nu/2}}     \Vert     \bar\theta \partial_{t}v_{\alpha}     \Vert_{H^{1.5}}    \end{split}    \llabel{ Ek3J SXUIjX 8P d NKI UFN JvP Ha VEQ208}   \end{align} while for the second one we have similarly   \begin{align}    \begin{split}    I_{52}    &=      \intt     \tpar^{0.5-\nu}(\theta\partial_{t}(a^{\mu\alpha}q))     \tpar^{1.5}(\partial_{\mu}\theta \partial_{t}v_{\alpha})    \les     \Vert     \tpar^{0.5-\nu}(\theta\partial_{t}(\bar\theta a^{\mu\alpha}\bar\theta q))     \Vert_{L^2}     \Vert     \tpar^{1.5}(\partial_{\mu}\theta \partial_{t}v_{\alpha})     \Vert_{L^2}     \\&     \les     \Vert     \theta     \Vert_{H^{1.5+\delta_0}}     \Vert \bar\theta \partial_{t} a\Vert_{H^{1.5-\nu/2}}         \Vert \bar\theta q\Vert_{H^{0.5-\nu/2}}     \Vert     \bar\theta \partial_{t}v_{\alpha}     \Vert_{H^{1.5}}     +     \Vert     \theta     \Vert_{H^{1.5+\delta_0}}     \Vert \bar\theta a\Vert_{H^{1.5-\nu/2}}         \Vert \bar\theta \partial_{t}q\Vert_{H^{0.5-\nu/2}}     \Vert     \bar\theta \partial_{t}v_{\alpha}     \Vert_{H^{1.5}}    .    \end{split}    \llabel{r4T eARP dXEV7B xM 0 A7w 7je p8M 4EQ209}   \end{align} Now, we turn to the term $I_4$, for which we modify  the considerations in Section~\ref{sec06}. With $\EE$ defined in \eqref{EQ199}, we first obtain the first equality in \eqref{EQ62}, i.e.,   \begin{align}    \begin{split}    I_4    &=    \int_{\Gamma_1}     \EE \partial_{t}v_{\alpha}     \EE        \partial_{i}        \Bigl(         \sqrt{g} g^{ij} (\de^\al_\lambda -g^{kl} \partial_k \eta^\al \partial_l \eta_\lambda)          \partial_j v^\lambda          + \sqrt{g}(g^{ij} g^{kl} - g^{lj}g^{ik} ) \partial_j \eta^\al \partial_k\eta_\lambda         \partial_l v^\lambda         \Bigr)     \\&     =     -    \int_{\Gamma_1}      \partial_{i} \EE \partial_{t}v_{\alpha}     \EE       \Bigl(         \sqrt{g} g^{ij} (\de^\al_\lambda -g^{kl} \partial_k \eta^\al \partial_l \eta_\lambda)           \partial_j v^\lambda         \Bigr)        \\&\indeq       -       \int_{\Gamma_1}         \partial_{i}   \EE \partial_{t}v_{\alpha}        \EE        \Bigl(         \sqrt{g}(g^{ij} g^{kl} - g^{lj}g^{ik} ) \partial_j \eta^\al \partial_k\eta_\lambda         \partial_l v^\lambda         \Bigr)     \\&\indeq     -    \int_{\Gamma_1}       \EE \partial_{t}v_{\alpha}     \tpar^{1-\nu/2}       \Bigl(         \partial_{i}\theta         \sqrt{g} g^{ij} (\de^\al_\lambda -g^{kl} \partial_k \eta^\al \partial_l \eta_\lambda)           \partial_j v^\lambda         \Bigr)     \\&\indeq       -       \int_{\Gamma_1}          \EE \partial_{t}v_{\alpha}        \tpar^{1-\nu/2}        \Bigl(         \partial_{i}\theta         (          \sqrt{g}(g^{ij} g^{kl} - g^{lj}g^{ik} ) \partial_j \eta^\al \partial_k\eta_\lambda           \partial_l v^\lambda          )        \Bigr)     =     I_{41}     + I_{42}     + I_{43}     + I_{44}    .   \end{split}    \llabel{Q ahOi hEVo Pxbi1V uG e tOt HbP tsEQ210}   \end{align} Using \eqref{EQ63}, we rewrite   \begin{align}    \begin{split}    I_{41}    &=     -    \intu     \tpar^{1-\nu/2}       \Bigl(         \theta         \sqrt{g} g^{ij} \Pi_{\lambda}^{\alpha}           \partial_j v^\lambda         \Bigr)      \partial_{i} \tpar^{1-\nu/2}(\theta \partial_{t}v_{\alpha})    \\&    =     -    \intu     \tpar^{1-\nu/2}       \Bigl(         \bar\theta         \sqrt{g} g^{ij} \Pi_{\lambda}^{\alpha}           \theta\partial_j v^\lambda         \Bigr)      \tpar^{1-\nu/2}(\theta \partial_{i}\partial_{t}v_{\alpha})     \\&\indeq     -    \intu     \tpar^{1-\nu/2}       \Bigl(         \theta         \sqrt{g} g^{ij} \Pi_{\lambda}^{\alpha}           \partial_j v^\lambda         \Bigr)       \tpar^{1-\nu/2}(\partial_{i}\theta \partial_{t}v_{\alpha})    \\&    =     -    \intu       \bar\theta       \sqrt{g} g^{ij} \Pi_{\lambda}^{\alpha}     \tpar^{1-\nu/2}       \Bigl(           \theta\partial_j v^\lambda         \Bigr)      \tpar^{1-\nu/2}(\theta \partial_{i}\partial_{t}v_{\alpha})     \\&\indeq     -    \intu     \biggl(     \tpar^{1-\nu/2}       \Bigl(         \bar\theta         \sqrt{g} g^{ij} \Pi_{\lambda}^{\alpha}           \theta\partial_j v^\lambda         \Bigr)       -        \bar\theta        \sqrt{g} g^{ij} \Pi_{\lambda}^{\alpha}       \tpar^{1-\nu/2}       (           \theta\partial_j v^\lambda         )       \biggr)      \tpar^{1-\nu/2}(\theta \partial_{i}\partial_{t}v_{\alpha})     \\&\indeq     -    \intu     \tpar^{1-\nu/2}       \Bigl(         \theta         \sqrt{g} g^{ij} \Pi_{\lambda}^{\alpha}           \partial_j v^\lambda         \Bigr)       \tpar^{1-\nu/2}(\partial_{i}\theta \partial_{t}v_{\alpha})    =    I_{411}    +  I_{412}    + I_{413}    .    \end{split}    \llabel{O 5r 363R ez9n A5EJ55 pc L lQQ Hg6EW64}   \end{align} Using $\Pi_{\lambda}^{\alpha}=\Pi_{\mu}^{\alpha}\Pi_{\lambda}^{\mu}$, the first term equals   \begin{align}    \begin{split}     I_{411}    &=    -    \intu      \bar\theta         \sqrt{g} g^{ij}           \Pi_{\lambda}^{\mu}         \tpar^{1-\nu/2}(\theta    \partial_j v^\lambda  )         \Pi_{\mu}^{\alpha}      \tpar^{1-\nu/2}( \theta\partial_{t}\partial_{i}v_{\alpha})    \\&    =    -    \intu      \bar\theta         \sqrt{g} g^{ij}           \Pi_{\lambda}^{\mu}         \tpar^{1-\nu/2}\partial_{j}(\theta    v^\lambda  )         \Pi_{\mu}^{\alpha}      \tpar^{1-\nu/2}\partial_{i}( \theta\partial_{t}v_{\alpha})     +\lot    \\&    =    -    \frac12    \frac{d}{dt}    \int_{\Gamma_1}        \bar\theta         \sqrt{g} g^{ij}           \Pi_{\lambda}^{\mu}          \partial_{j} \tpar^{1-\nu/2}(\theta v^{\lambda})         \Pi_{\mu}^{\alpha}      \partial_{i}     \tpar^{1-\nu/2}(\theta v_{\alpha})    +    \lot    \end{split}    \llabel{ X1J EW K8Cf 9kZm 14A5li rN 7 kKZ WQ65}   \end{align} It is easy to check that $I_{412}$ and $I_{413}$ constitute lower order terms. We thus obtain   \begin{align}    \begin{split}     I_{41}     &      =    -    \frac12    \frac{d}{dt}    \int_{\Gamma_1}        \bar\theta         \sqrt{g} g^{ij}           \Pi_{\lambda}^{\mu}          \partial_{j} \tpar^{1-\nu/2}(\theta v^{\lambda})         \Pi_{\mu}^{\alpha}      \partial_{i}     \tpar^{1-\nu/2}(\theta v_{\alpha})     +     \lot     \\&     =    -    \frac12    \frac{d}{dt}    \int_{\Gamma_1}        \sqrt{g(0)} g^{ij}        \bar\theta        \,          \Pi_{\lambda}^{\mu}          \partial_{j} \tpar^{1-\nu/2}(\theta v^{\lambda})        \,         \Pi_{\mu}^{\alpha}      \partial_{i}     \tpar^{1-\nu/2}(\theta v_{\alpha})     \\&\indeq    -    \frac12    \frac{d}{dt}    \int_{\Gamma_1}        \bar\theta         \Bigl(            \sqrt{g} g^{ij} - \sqrt{g(0)}g^{ij}(0)         \Bigr)          \Pi_{\lambda}^{\mu}          \partial_{j} \tpar^{1-\nu/2}(\theta v^{\lambda})         \Pi_{\mu}^{\alpha}      \partial_{i}     \tpar^{1-\nu/2}(\theta v_{\alpha})     +     \lot    \end{split}    \llabel{rY0 K10 It eJd3 kMGw opVnfY EG 2 oWQ66}   \end{align} The first term on the right hand side leads to the needed coercive term, providing the control of the $H^{2-\nu/2}(\Gamma)$ norm of $\Pi v$. The second term is, after the time integration, dominated by the coercive term  by Lemma~\ref{L01}(iv). As in Section~\ref{sec06}, we have   \begin{align}    \begin{split}    I_{42}    &=      -       \int_{\Gamma_1}         \bar\theta         \sqrt{g}(g^{ij} g^{kl} - g^{lj}g^{ik} ) \partial_j \eta^\al \partial_k\eta_\lambda           \partial_{l}\EE v^{\lambda}         \partial_{i}\EE\partial_{t} v_{\alpha}     \\&\indeq      -       \int_{\Gamma_1}        \biggl(         \EE         \bigl(         \bar\theta          \sqrt{g}(g^{ij} g^{kl} - g^{lj}g^{ik} ) \partial_j \eta^\al \partial_k\eta_\lambda  \partial_l v^\lambda          \bigr)         -         \bar\theta          \sqrt{g}(g^{ij} g^{kl} - g^{lj}g^{ik} ) \partial_j \eta^\al \partial_k\eta_\lambda          \partial_{l}\EE v^{\lambda}        \biggr)         \partial_{i}\EE\partial_{t} v_{\alpha}     \\&     =     I_{421}     +        I_{422}    .    \end{split}    \llabel{rG fj0 TTA Xt ecJK eTM0 x1N9f0 lR WQ67}   \end{align} Also, as in Section~\ref{sec06}, we have   \begin{align*}     I_{421}     = -      \int_{\Gamma_{1}} \frac{\bar\theta}{\sqrt{g}}            \bigl(                  \partial_{t} \det A^{1} + \det A^{2}  + \det A^{3}           \bigr)   \end{align*} where   \begin{align*}     A^{1} =      \begin{pmatrix}     \partial_{1} \eta_{\mu} \partial_{1} \EE v^{\mu} & \partial_{1} \eta_{\mu} \partial_{2}      \\     \partial_{2} \eta_{\mu} \partial_{1} \EE v^{\mu} & \partial_{2} \eta_{\mu}\partial_{2} \EE v^{\mu}     \end{pmatrix},     A^{2} =      \begin{pmatrix}     \partial_{1} v_{\mu}\partial_{1} \EE v^{\mu} & \partial_{1} \eta_{\mu} \partial_{2} \EE v^{\mu} \\     \partial_{2} v_{\mu} \partial_{1} \EE v^{\mu} & \partial_{2} \eta_{\mu} \partial_{2} \EE v^{\mu}     \end{pmatrix},     A^{3} =      \begin{pmatrix}     \partial_{1} \eta_{\mu} \partial_{1} \EE v^{\mu} & \partial_{1}  v_{\mu}  \partial_{2} \EE v^{\mu} \\     \partial_{2} \eta_{\mu} \partial_{1} \EE v^{\mu} & \partial_{2} v_{\mu} \partial_{2} \EE v^{\mu}     \end{pmatrix},   \end{align*} and we obtain    \begin{align}   \begin{split}    I_{421}    &=           -      \int_{\Gamma_{1}}        \partial_{t}       \left(        \frac{\bar\theta}{\sqrt{g}} \det A^{1}       \right)       +      \int_{\Gamma_{1}}        \partial_{t}             \left(        \frac{\bar\theta}{\sqrt{g}}       \right)       \det A^{1}       
-  \int_{\Gamma_{1}} \frac{\bar\theta}{\sqrt{g}} \det A^{2}       -        \int_{\Gamma_{1}}          \frac{\bar\theta}{\sqrt{g}} \det A^{3}    \\&    = I_{4211}       + I_{4212}      + I_{4213}      + I_{4214}              .   \end{split}    \llabel{p QkP M37 3r0 iA 6EFs 1F6f 4mjOB5 WQ71}   \end{align} As above,    \begin{equation}    \left\Vert     \partial_{t}     \left(      \fractext{\bar\theta}{\sqrtg}     \right)    \right\Vert_{L^\infty(\Gamma_1)}    \les    P(\Vert \eta\Vert_{H^{2.5+\delta_0}})    \Vert v\Vert_{H^{2.5+\delta_0}}    \llabel{zu 5 GGT Ncl Bmk b5 jOOK 4yny My04WQ72}   \end{equation} and $|\det A^{1}|\les |\barpar \eta|^2  (\EE\barpar v)^2$. Therefore,   \begin{equation}     I_{4212}    \les     P(\Vert \eta\Vert_{H^{2.5+\delta_0}})          \Vert v\Vert_{H^{2.5+\delta_0}}     \Vert \EE \barpar v\Vert_{L^2(\Gamma_1)}^2     \les     P(\Vert \eta\Vert_{H^{2.5+\delta_0}})          \Vert v\Vert_{H^{2.5+\delta_0}}     \Vert \EE v\Vert_{H^{1.5}}^2    \llabel{oz 6m 6 Akz NnP JXh Bn PHRu N5Ly qWQ73}   \end{equation} as well as   \begin{equation}    \left|     \int_{\Gamma_1}     \frac{1}{\sqrt{g}}       (\det A^{2}+\det A^{3})    \right|    \les     P(\Vert \eta\Vert_{H^{2.5+\delta_0}})          \Vert \EE \barpar v\Vert_{L^2(\Gamma_1)}^2     \les     P(\Vert \eta\Vert_{H^{2.5+\delta_0}})          \Vert \EE v\Vert_{H^{1.5}}^2    .    \llabel{Sguz5 Nn W 2lU Yx3 fX4 hu LieH L30WQ74}   \end{equation} The rest is the same as in Section~\ref{sec06}.  We integrate by parts and write   \begin{align}    \begin{split}    \int_{\Gamma_1}    \frac{\bar\theta}{\sqrtg}    \det A^{1}    &=    \int_{\Gamma_1}    \frac{\bar\theta}{\sqrtg}    \bigl(     \partial_{1}\eta_{\mu}\partial_{2}\eta_{\lambda}     \partial_{1}\EE v^{\mu}     \partial_{2}\EE v^{\lambda}     -     \partial_{1}\eta_{\mu}\partial_{2}\eta_{\lambda}     \partial_{2}\EE v^{\mu}     \partial_{1}\EE v^{\lambda}    \bigr)    \\&    =    \int_{\Gamma_1}    \frac{\bar\theta}{\sqrtg}    \bigl(     -     \partial_{1}\eta_{\mu}\partial_{2}\eta_{\lambda}     \EE v^{\mu}     \partial_{1}\partial_{2}\EE v^{\lambda}     +     \partial_{1}\eta_{\mu}\partial_{2}\eta_{\lambda}     \EE v^{\mu}     \partial_{2}\partial_{1}\EE v^{\lambda}    \bigr)    -    \int_{\Gamma_1}      \bar\theta      Q_{\mu\lambda}^{i}(\bar\partial\eta,\bar\partial^2\eta)      \EE v^{\mu}      \partial_{i} \EE v^{\lambda}    \\&    =    -    \int_{\Gamma_1}      \bar\theta      Q_{\mu\lambda}^{i}(\bar\partial\eta,\bar\partial^2\eta)      \EE v^{\mu}      \partial_{i} \EE v^{\lambda}    \end{split}    \llabel{w g93Xwc gj 1 I9d O9b EPC R0 vc6A WQ75}   \end{align} where $Q_{\mu\lambda}^{i}(\bar\partial\eta,\bar\partial^2\eta)$ is a rational function, which is linear in $\bar\partial^2\eta$. Therefore,   \begin{equation*}    I_{4211}    =    \frac{d}{dt}    \int_{\Gamma_1}    \bar\theta    \tilde Q_{\mu\lambda}^{i}(\bar\partial\eta)\bar\partial^2\eta      \EE v^{\mu}      \partial_{i} \EE v^{\lambda}    ,    \end{equation*} and we obtain   \begin{equation}    I_{421}    \les    \frac{d}{dt}    \int_{\Gamma_1}    \bar\theta    \tilde Q_{\mu\lambda}^{i}(\bar\partial\eta)\bar\partial^2\eta      \EE v^{\mu}      \partial_{i} \EE v^{\lambda}     +     P(\Vert \eta\Vert_{H^{2.5+\delta_0}})          (\Vert v\Vert_{H^{2.5+\delta_0}}+1)     \Vert \EE v\Vert_{H^{1.5}}    .    \llabel{005Q VFy1ly K7 o VRV pbJ zZn xY dcWQ78}   \end{equation} Thus we have shown how to adapt the result in  Section~\ref{sec07} to the case of the curved domain. \gsdfgdsfgdsfg After covering $\Gamma$ with finitely many balls $\{ B_{r_\ell}(y_\ell) \}_{\ell=0}^N$, the procedure described above yields the desired estimates near the boundary. In order to obtain the full estimate, we need to bound the solution in the region of $\Omega$ not covered by $B = \bigcup_{\ell=0}^N B_{r_\ell}(y_\ell)$. This is done by covering $\Omega \backslash B$ with further open sets and again reducing the problem to estimates on $\mathbb{T}^2 \times [0,1]$. However, for these estimates no integrals on either $\Gamma_1$ or $\Gamma_2$ will appear. \gsdfgdsfgdsfg Using again cut-off functions and the local parameterization of $\Om$ described above,  the $L^2$ estimate for $\partial_{t}^2v$ in Section~\ref{sec08} is easily adapted to the present situation  since only an integer number of derivatives is used in those estimates. The later sections,  including the div-curl estimates and the Cauchy invariance property, are also easily adaptable.  This establishes Theorems \ref{T02} and \ref{T03}, except for the statement $\Vert \Gamma(t)\Vert_{H^{3 +\epsilon}} \leq C_*$, which we now prove. \gsdfgdsfgdsfg Let $y_0 \in \eta(\Omega)$. We choose coordinates  $(y^1,y^2,y^3)$ in the ambient Euclidean space such that, possibly after a rigid motion and relabeling of the coordinates, $y_0$ is identified with the origin and $\eta(\Omega)$ is locally given by a graph $y^3 = h(y^1, y^2)$. Denote by $\Sigma$ the portion of $\eta(\Omega)$ that is written as the graph of $h$. We can further assume that $\partial_{y^i}$, for $i=1,2$, are tangent to $\Sigma$ at $y_0=(0,0,0)$ and that $\partial_{y^1} h(y_0) = \partial_{y^2} h (y_0)= 0$. \gsdfgdsfgdsfg Recall that we denote by $\cH\colon \eta(\Om) \to \mathbb{R}$ the mean curvature of $\eta(\Om)$. In terms of local coordinates $(x^1,x^2,x^3)$ near $\eta^{-1}(y_0)$ we have the known formula $-\Delta_g \eta^\alpha = \cH \circ \eta \, n^\alpha \circ \eta$, where $n$ is the unit outer normal to $\eta(\Om)$. Contracting with $n^\alpha \circ \eta$, invoking \eqref{ZZ16} and \eqref{EQ12} (which is the part of Theorem \ref{T02} that has already been established) we have $\Vert \cH \Vert_{H^{1+\epsilon}} \leq C_*$ (here, and in what follows, we relabel the constant $C_*$ if necessary). On the other hand, setting  $w=h-y^3$, we have the following expression for the mean curvature expressed in  $y$-coordinates: \begin{gather} A^{ij}(\nabla w) \partial_i \partial_j w = \frac{1}{|\nabla w|}\left( \delta^{ij} - \frac{\partial^i w \partial^j w}{|\nabla w|^2} \right) \partial_i \partial_j w = \cH \circ h, \label{mean_curv_graph} \end{gather} where $\partial^i = \delta^{ik} \partial_k$. From the way we constructed $h$, we have $A^{ij}(y_0) = \delta^{ij}$. We already know that  $\Vert \Si \Vert_{H^{2+\epsilon}} \leq C_*$ since we have a bound for $\eta$, thus we may assume that $\Vert w \Vert_{2+\ep} \leq C_*$. It follows that $A^{ij}$ is uniformly elliptic near the origin and bounded in $C^{0,\beta}$ for some $0 < \beta < \epsilon$. Elliptic regularity then implies that $\Vert w \Vert_{H^{3+\epsilon}} \leq C_*$, as desired. \gsdfgdsfgdsfg We remark that the application of elliptic theory in the previous paragraph is not entirely immediate, and has to be carried out in steps due to the low regularity of the coefficients $A^{ij}$. First, one uses Schauder theory and the embedding $H^{1+\epsilon}(\Sigma) \subset C^{0,\beta}(\Si)$ to conclude that $w$ is in $C^{2,\beta}$. Then the coefficients $A^{ij}$ are in fact $C^{1,\beta}$. Using that the right hand side of \eqref{mean_curv_graph} is in $H^1$ we can then apply $L^p$ estimates to obtain $w \in H^3$. Thus, $A^{ij}$ is now in $H^2$, and we can interpolate between estimates for elliptic operators with coefficients in Sobolev spaces of integer order to finally conclude the result. \gsdfgdsfgdsfg \gsdfgdsfgdsfg \small \bibliographystyle{plain} \bibliography{References.bib} \gsdfgdsfgdsfg \end{document}